\newtheorem{theorem}{Theorem}[section]
\newtheorem{introthm}{Theorem}
\newtheorem{introconjecture}[introthm]{Conjecture}
\newtheorem*{theorem*}{Theorem}
\newtheorem{proposition}[theorem]{Proposition}
\newtheorem{lemma}[theorem]{Lemma}
\newtheorem{corollary}[theorem]{Corollary}
\newtheorem{conjecture}[theorem]{Conjecture}
\theoremstyle{definition}
\newtheorem{example}[theorem]{Example}
\newtheorem{remark}[theorem]{Remark}
\newtheorem{definition}[theorem]{Definition}
\newcommand{\id}{\textup{id}}
\newcommand{\GL}{\textup{GL}}
\newcommand{\SL}{\textup{SL}}
\newcommand{\CC}{\mathbb{C}}
\renewcommand{\AA}{\mathbb{A}}
\newcommand{\Stck}{\textup{\bf{}Stck}}
\newcommand{\K}{\textup{K}}
\newcommand{\GG}{\mathbb{G}}
\newcommand{\FF}{\mathbb{F}}
\newcommand{\PP}{\mathbb{P}}
\newcommand{\LL}{q}
\newcommand{\ZZ}{\mathbb{Z}}
\newcommand{\QQ}{\mathbb{Q}}
\newcommand{\fH}{\mathcal{H}}
\newcommand{\fV}{\mathcal{V}}
\newcommand{\Var}{\textup{\bf{}Var}}
\newcommand{\Hom}{\textup{Hom}}
\newcommand{\U}{\mathbb{U}}
\renewcommand\S[1]{\mathrm{S}#1}
\DeclareMathOperator{\Rep}{Rep}
\DeclareMathOperator{\Sym}{Sym}
\DeclareMathOperator{\Res}{Res}
\DeclareMathOperator{\Ind}{Ind}
\DeclareMathOperator{\Conf}{Conf}
\DeclareMathOperator{\MHS}{MHS}
\DeclareMathOperator{\Irr}{Irr}
\title{\bf{}Motivic (Representation) Stability of Representation Varieties and Character Stacks}
\author[$\ddagger$]{Márton Hablicsek}
\author[$ $]{Jesse Vogel}
\affil[$\ddagger$]{\footnotesize Department of Mathematics, Leiden University, Niels Bohrweg 1, 2333 CA Leiden, Netherlands, hablicsekhm@math.leidenuniv.nl}
\date{\today}
\begin{document}

\maketitle

\begin{abstract}
    In this paper, we introduce the notions of motivic representation stability that is an algebraic counterpart of the notion of representation stability. In the process, we also introduce the notion of motivic decomposition for varieties equipped with an action of a finite group $G$. This motivic decomposition decomposes the virtual class of the variety with respect to irreducible rational representations of $G$.
    
    We also formulate conjectures on motivic representation stability in the context of representation varieties and character stacks, and we verify the conjectures for groups whose virtual classes have been extensively studied.
\\

    \noindent \textbf{Keywords:} representation stability, Grothendieck ring of varieties, representation variety, character stack.
    
    \noindent \textbf{AMS Subject Classification:} 14F45, 14M35, 20C05.
\end{abstract}
\section{Introduction} 

The purpose of this paper is twofold. On one hand, we provide a framework and computational tools to explore the concept of motivic stability and motivic representation stability, focusing on representation varieties of surface groups, free groups, and free abelian groups. Motivic representation stability extends the notion of representation stability \cite{church2013representation} by analyzing virtual classes of varieties in Grothendieck rings, offering an algebraic counterpart to representation stability that simultaneously captures representation stability in various cohomology theories.

On the other hand, we expand Chapter 7 of \cite{vogel2024motivicphd} and formulate conjectures regarding motivic stability and motivic representation stability in the context of representation varieties and character stacks over an algebraically closed field $k$ of characteristic 0. Specifically, we analyze the following cases.
\begin{itemize}
    \item Surface Groups, i.e., examining the motivic stability of $G$-representation varieties, $\Rep_G(M_g)$, of compact surfaces of increasing genus. In this case, we conjecture the following.
    
    \begin{introconjecture}\label{introcon:A}
    Let $G$ be a connected linear algebraic group over $k$. Let $\Rep_G(M_g)$ denote the $G$-representation variety of the surface group of a smooth compact genus $g$ surface. Then, 
    \begin{equation}\label{eq:limit}
        \lim_{g\to \infty}\frac{[\Rep_G(M_g)]}{[G^{2g}]}=\frac{[G/[G,G]]}{[G]}
    \end{equation}
    in the completed Grothendieck rings of stacks, $\widehat{\K_0(\Stck_k)}$.
    \end{introconjecture}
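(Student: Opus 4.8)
Write $\mu_g\colon G^{2g}\to G$, $(a_1,b_1,\dots,a_g,b_g)\mapsto \prod_{i=1}^{g}[a_i,b_i]$, for the product-of-commutators map, so $\Rep_G(M_g)=\mu_g^{-1}(1)$. Since each commutator lies in the closed connected derived subgroup, $\mu_g$ factors through $[G,G]$, and it is equivariant for the simultaneous-conjugation action of $G$. Because $[G^{2g}]=[G]^{2g}$ and $[G]$ is invertible in $\K_0(\Stck_k)$, the identity \eqref{eq:limit} is equivalent to
\begin{equation*}
    [\Rep_G(M_g)]\;=\;[G]^{2g-1}\,[G/[G,G]]\;+\;\varepsilon_g,\qquad\text{with}\quad \varepsilon_g\to 0\ \text{ in }\ \widehat{\K_0(\Stck_k)}.
\end{equation*}
The shape of the main term is predicted by the finite-field model: over $\FF_q$, Frobenius's formula gives $\#\Rep_G(M_g)=|G(\FF_q)|^{\,2g-1}\sum_{\chi}\chi(1)^{2-2g}$, and as $g\to\infty$ the only summands that survive are the $|(G/[G,G])(\FF_q)|$ linear characters, each contributing $1$. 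The plan is to lift this count to the Grothendieck ring.

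The central step is a \emph{motivic Frobenius formula}. Using the surface TQFT for $G$-representation varieties (in the style of Gonz\'alez-Prieto--Logares--Mu\~noz) -- equivalently, the convolution product on the space of motivic class functions of $G$ together with a geometric, Lusztig-type parametrisation of its irreducible representations diagonalising the handle operator -- one expects an identity
\begin{equation*}
    [\Rep_G(M_g)]\;=\;[G]^{2g-1}\sum_{\alpha\in I}[\cM_\alpha]\;d_\alpha^{\,2-2g},
\end{equation*}
where $I$ is a \emph{finite} set of \emph{types} of irreducible representations, $\cM_\alpha$ is the variety parametrising those of type $\alpha$ (of dimension bounded independently of $g$), and $d_\alpha\in\K_0(\Var_k)$ is the corresponding motivic degree, a polynomial in $\LL$ whose leading term is a power of $\LL$ and hence invertible in $\widehat{\K_0(\Stck_k)}$. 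I would establish this first for $G$ reductive, grouping the strata along (geometric) Lusztig series, and then descend along $1\to U\to G\to G^{\mathrm{red}}\to 1$ with $U$ the unipotent radical: since $U$ is unipotent, $\Rep_G(M_g)$ is built from $\Rep_{G^{\mathrm{red}}}(M_g)$ by iterated affine-space-bundle-type constructions controlled by cohomology with $U$-coefficients, and $G/[G,G]$ and $G^{\mathrm{red}}/[G^{\mathrm{red}},G^{\mathrm{red}}]$ have the same class, so the reductive case suffices.

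Granting the formula, the limit is immediate. Let $\alpha_0\in I$ be the type of one-dimensional characters; these are pulled back from $G^{\mathrm{ab}}=G/[G,G]$, so $d_{\alpha_0}=1$ and $[\cM_{\alpha_0}]=[G/[G,G]]$. For every other $\alpha$ the degree $d_\alpha$ is a non-constant polynomial in $\LL$, whence
\begin{equation*}
    \dim\!\big([\cM_\alpha]\,d_\alpha^{\,2-2g}\big)\;\le\;\dim\cM_\alpha-(2g-2)\;\xrightarrow{\ g\to\infty\ }\;-\infty .
\end{equation*}
Thus $\varepsilon_g=[G]^{2g-1}\sum_{\alpha\ne\alpha_0}[\cM_\alpha]\,d_\alpha^{\,2-2g}$ satisfies $\varepsilon_g/[G^{2g}]\to 0$, and dividing the displayed identity by $[G^{2g}]=[G]^{2g}$ gives $\lim_{g\to\infty}[\Rep_G(M_g)]/[G^{2g}]=[G/[G,G]]/[G]$.

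The hard part -- and the reason the statement is posed as a conjecture rather than proved -- is the motivic Frobenius formula itself. It requires a geometric/motivic refinement of Lusztig's classification of $\mathrm{Irr}(G(\FF_q))$ precise enough to (i) realise the ``number of characters of type $\alpha$'' as the point count of a genuine variety $\cM_\alpha$, (ii) make sense of the degrees $d_\alpha$ motivically and check they are non-constant for $\alpha\ne\alpha_0$, and (iii) control the non-reductive case. For the groups whose virtual classes have already been computed -- tori, unipotent groups, $\GL_n$, $\SL_n$, $\PGL_n$, and the low-rank examples -- $[\Rep_G(M_g)]$ is an explicit polynomial in $\LL$ and \eqref{eq:limit} follows by isolating the leading term; this is what the case-by-case verifications in the paper carry out.
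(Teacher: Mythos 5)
You are careful to present this as a strategy rather than a proof, and that honesty is warranted: the statement is a conjecture in the paper, and everything in your argument hangs on the ``motivic Frobenius formula,'' which is precisely the open content. Granting that formula, your limit computation (linear characters give the main term $[G]^{2g-1}[G/[G,G]]$, all other types are killed by $d_\alpha^{2-2g}$) is the same mechanism the paper offers as evidence: the finite-field Frobenius count, which the paper makes rigorous only at the level of the E-polynomial motivic measure, for \emph{reductive} $G$ with \emph{connected center}, using the PORC property of $\#\Rep_{G(\FF_q)}(M_g)$ (Bridger--Kamgarpour), Katz's theorem, and the fact that non-linear character degrees have positive dimension; beyond that, the paper only verifies the conjecture for $\SL_2$ and for upper triangular groups of rank at most $5$, where $[\Rep_G(M_g)]$ is known explicitly from TQFT computations. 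Note that the connected-center hypothesis, which your sketch drops, is exactly what the PORC input requires, and that realising the $\cM_\alpha$ and $d_\alpha$ motivically (your items (i)--(ii)) is not established anywhere, even for $\GL_r$.

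The one step you present as routine that is a genuine gap is the reduction along $1\to U\to G\to G^{\mathrm{red}}\to 1$. First, the claim that $[G/[G,G]]=[G^{\mathrm{red}}/[G^{\mathrm{red}},G^{\mathrm{red}}]]$ is false in general: already for $G=\GG_a\times \GG_m$ the left side is $\LL(\LL-1)$ while the right side is $\LL-1$; the abelianization of $G$ involves the coinvariants of $U$, not just $G^{\mathrm{red}}$. Second, even when the classes agree (e.g.\ $G=\U_2$), the conjectured limits for $G$ and $G^{\mathrm{red}}$ differ by the factor $\LL^{-\dim U}$ coming from $[G]=\LL^{\dim U}[G^{\mathrm{red}}]$, so deducing the $G$-case from the reductive case requires showing that $[\Rep_G(M_g)]$ equals $\LL^{(2g-1)\dim U}[\Rep_{G^{\mathrm{red}}}(M_g)]$ up to terms negligible in $\widehat{\K_0(\Stck_k)}$. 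The fibers of $\Rep_G(M_g)\to\Rep_{G^{\mathrm{red}}}(M_g)$ are spaces of lifts governed by twisted cohomology of $\pi_1(M_g)$ with coefficients in $U$, and their classes jump with the base point; the explicit answer $[\Rep_{\U_2}(M_g)]=\LL^{2g-1}(\LL-1)^{4g}+\LL^{2g-1}(\LL-1)^{2g+1}$ shows the extra strata are genuinely there and must be bounded, which is an argument you do not supply. This is exactly why the paper treats the upper triangular cases by explicit TQFT computation rather than by any reduction to the reductive quotient, so you should either restrict your strategy to reductive $G$ with connected center or add a quantitative lifting argument for the unipotent radical.
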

    \item Free Groups, i.e., examining the motivic representation stability of the representation varieties $G^n:=\Hom_{Gp}(F^n, G)$ corresponding to the free groups $F_n$. In this case, we conjecture that the character stack is motivically representation stable.
    \begin{introconjecture}\label{introcon:B}
    Let $G$ be a connected linear algebraic group over $k$. Then, the sequence of representation varieties, $G^n$, and the sequence of their corresponding character stacks, $[G^n/G]$, are motivically representation stable.
    \end{introconjecture}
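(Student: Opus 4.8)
Since $F_n$ is free we have $G^n=\Hom(F_n,G)=G^{\times n}$, and the $S_n$-action relevant to representation stability is the one permuting the $n$ factors (it permutes a free basis of $F_n$); it commutes with the simultaneous-conjugation action of $G$, and the coordinate-forgetting maps $G^{n+1}\to G^n$ organize $n\mapsto G^n$ into an $\mathrm{FI}$-object. The strategy is to reduce the motivic decomposition of $[G^n]$ to the representation stability of exterior powers. Because the unipotent radical of $G$ is an affine space, $G$ is homotopy equivalent to its maximal reductive quotient, so $H^*(G;\QQ)=\Lambda(W)$ is an exterior algebra on a finite-dimensional graded (odd) $\QQ$-vector space $W$ carrying a mixed Tate Hodge structure; consequently $H^*(G^n;\QQ)=H^*(G)^{\otimes n}=\Lambda(W^{\oplus n})$, with $S_n$ permuting the $n$ summands of $W^{\oplus n}$ together with the induced Koszul signs. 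Thus the $S_n$-equivariant, Hodge-graded cohomology of $G^n$ is a fixed functor, in the index $n$, of the single finite-dimensional datum $W$, and the motivic-decomposition component $[G^n]_{V(\lambda)_n}$ is the alternating sum over cohomological degree of the $V(\lambda)_n$-multiplicity spaces of $\Lambda(W^{\oplus n})$, each taken with its Hodge structure. For $G=\GG_m$, tori, $\SL_2$, $\GL_m$, $\SL_m$, $\PGL_m$, where $[G]$ and $[\B G]$ are explicitly known, this reduces to a completely explicit plethystic computation, which is how I would verify the conjecture in those cases.

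The heart of the argument is the stabilization. First I would establish the purely representation-theoretic fact that, for each fixed cohomological degree $d$, the functor $n\mapsto \Lambda(W^{\oplus n})_{d}$ is a \emph{finitely generated} $\mathrm{FI}$-module over $\QQ$, generated in $\mathrm{FI}$-degree $\le d$: a homogeneous degree-$d$ element of $\Lambda(W^{\oplus n})$ is a wedge of at most $d$ homogeneous generators, hence is supported on at most $d$ of the $n$ summands, so it lies in the $\mathrm{FI}$-image of some $\Lambda(W^{\oplus j})_{d}$ with $j\le d$. By the structure theory of finitely generated $\mathrm{FI}$-modules (in the sense introduced in \cite{church2013representation}), the multiplicity of $V(\lambda)_n$ in $\Lambda(W^{\oplus n})_{d}$ is then independent of $n$ for $n$ past an explicit bound depending only on $d$, and, since this multiplicity space is produced functorially from $W$ (and the stabilizing comparison maps come from morphisms of varieties), it carries a Hodge structure that is likewise eventually independent of $n$. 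Next I would prove the finiteness needed to pass from ``each degree'' to the whole alternating sum: an elementary argument with Pieri's rule, applied to the description of $\Lambda(W^{\oplus n})$ as a sum of modules induced from Young subgroups, shows that for fixed $\overline{\lambda}$ the irreducible $V(\lambda)_n$ occurs in $H^{d}(G^n)$ only for $d$ in a bounded window determined by $\overline{\lambda}$ and $W$. Combining these, $[G^n]_{V(\lambda)_n}$ is a finite alternating sum of stable, mixed Tate Hodge structures, so it stabilizes as $n\to\infty$ to a class in $\ZZ[\LL]\subseteq \K_0(\Var_k)$ independent of $n$; this is motivic representation stability for the sequence $G^n$.

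For the character stacks I would use that the conjugation action of the connected group $G$ is trivial on $H^*(G^n;\QQ)$ and commutes with the $S_n$-action, so that $H^*([G^n/G];\QQ)=H^*(G^n;\QQ)\otimes H^*(\B G;\QQ)$ as graded $S_n$-representations with $H^*(\B G)$ carrying the trivial $S_n$-action. Passing to alternating sums gives $[G^n/G]_{V(\lambda)_n}=[\B G]\cdot[G^n]_{V(\lambda)_n}$, a computation carried out in the completed ring $\widehat{\K_0(\Stck_k)}$, in which $[\B G]=[G]^{-1}=\LL^{-\dim G}\cdot(\text{unit})$ is well defined. Since $[G^n]_{V(\lambda)_n}$ stabilizes in $n$, so does $[\B G]\cdot[G^n]_{V(\lambda)_n}$, and the character-stack sequence inherits motivic representation stability.

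The main obstacle is twofold. On the bookkeeping side, the total class $[G^n]=[G]^n$ grows like $\LL^{n\dim G}$, so the statement cannot be read off from the total class: it must be read at the level of the Hodge-graded (equivalently, Poincaré-normalized) multiplicity pieces, and one must check that the resulting stable limits assemble uniformly into the completed Grothendieck ring appearing in the conjecture. More seriously, one needs the $\mathrm{FI}$-theoretic input to deliver uniformity over all $\lambda$ at once — an effective common stable range together with the boundedness of the contributing cohomological degrees for each $\overline{\lambda}$ — which is where the representation-stability machinery genuinely does the work; for non-reductive $G$ one additionally checks that replacing $H^*(G)$ by $H^*(G_{\mathrm{red}})$ and inserting the factor $\LL^{\dim R_u(G)}$ leaves the argument intact.
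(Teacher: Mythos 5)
Your plan has a genuine gap, and it is the central step: the entire argument takes place in the cohomological (Hodge) realization, while motivic representation stability as defined here (Definition \ref{def:motrepstab}) is a statement about virtual classes — for every partition $\lambda$ the classes $[G^n/S_{\lambda[n]}]$, normalized by the appropriate power of $\LL$, must converge in $\widehat{M_\LL^G}$, resp.\ in $\widehat{\K_0(\Stck_k)}$ for the stacks. Your identification of the motivic-decomposition coefficient of $V_{\lambda[n]}$ with an alternating sum of $V_{\lambda[n]}$-multiplicity spaces of $\Lambda(W^{\oplus n})$ with their Hodge structures is only valid after applying the E-polynomial motivic measure of Remark \ref{rem:epolymotmeasure}: the class $[X/H]\in\K_0(\Var_k)$ is not determined by the $H$-equivariant rational cohomology of $X$ (the paper emphasizes precisely this — there is no motivic K\"unneth formula in general), and stabilization of E-polynomials does not imply convergence in the dimension-filtration completion. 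So even if every step were carried out, you would have reproved cohomological representation stability in the spirit of \cite{ramras2021homological}, not the motivic statement. Two further steps are also unjustified: for the character stacks you use $[\B G]=[G]^{-1}$ and implicitly $[G^n/G]=[G^n]/[G]$ in $\K_0(\Stck_k)$, which hold for special groups but fail in general (so connected groups such as $\PGL_n$ are not covered), and the equivariant K\"unneth isomorphism $H^*([G^n/G])\cong H^*(G^n)\otimes H^*(\B G)$ is an unproved formality assertion.

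For comparison, the paper's route stays inside the Grothendieck ring and is much shorter. By Example \ref{ex:freemotrepstab}, representation stability of $G^n$ is equivalent to motivic stability of the equivariant symmetric powers $\Sym^n_G G$. For the representation varieties this follows from Lemma \ref{lem:motstabformonic}: $[G]$ is a monic polynomial in $\LL$ for connected $G$, and Kapranov's motivic zeta function gives convergence directly in $\widehat{M_\LL}$ — no FI-modules and no cohomology are needed, and no stable range over all $\lambda$ has to be controlled, since each fixed $\lambda$ only involves finitely many symmetric powers. For the character stacks the paper does not prove the full conjecture; it verifies it for $G=\GL_r$ and $G=\U_r$ via Proposition \ref{prop:symmstabilizeGLr} (the non-free locus of $\Sym^n_{\GL_r}(\AA^r)$ is of negligible dimension and $\GL_r$ is special), together with Corollary \ref{cor:stabsymmetric} and Lemma \ref{lem:openstabilizeallstabilize}, all carried out equivariantly in $\widehat{M_\LL^G}$ rather than by dividing by $[G]$. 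To salvage your strategy you would need a mechanism transporting FI-module finite generation to identities among virtual classes of the quotients $[G^n/S_{\lambda[n]}]$ themselves; that is exactly the role the zeta-function and torsor arguments play in the paper.
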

    \item Free Abelian Groups, i.e., examining the motivic representation stability of the representation varieties $C_n(G):=\Hom_{Gp}(\ZZ^n, G)$. We conjecture that sequence $C_n(G)$ also satisfies motivic representation stability.
    \begin{introconjecture}\label{introcon:C}
    Let $G$ be a connected linear algebraic group over $k$, and assume that $G$ has connected center. Then, the sequence of representation varieties $C_n(G)$ and their corresponding character stacks are motivically representation stable.
    \end{introconjecture}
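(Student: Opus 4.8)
\textbf{Proof strategy for Conjecture~\ref{introcon:C}.}
The plan is to organize the sequence $n\mapsto C_n(G)$ into a motivic $\mathrm{FI}$-variety and reduce the conjecture to a finiteness statement about it. First I would record the functoriality: an injection $\iota\colon[m]\hookrightarrow[n]$ induces the surjection $\ZZ^n\twoheadrightarrow\ZZ^m$ killing the coordinates outside $\iota([m])$, hence by precomposition a morphism $C_m(G)\to C_n(G)$ that ``spreads out'' a commuting $m$-tuple to an $n$-tuple by inserting the identity in the new slots. These maps are equivariant for simultaneous conjugation, so they descend to the character stacks $[C_n(G)/G]$, and for $m=n$ they recover the permutation action of $S_n$. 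Applying the motivic decomposition introduced above to the resulting $S_n$-equivariant virtual classes produces, for each partition $\lambda$, a ``motivic multiplicity'' in the completed Grothendieck ring, and Conjecture~\ref{introcon:C} is the statement that, after the usual reindexing $\lambda\mapsto\lambda[n]$, these stabilize as $n\to\infty$. By the $\mathrm{FI}$-module formalism of \cite{church2013representation} (in its motivic incarnation), it then suffices to show that $\{[C_n(G)]\}_n$ is a finitely generated motivic $\mathrm{FI}$-module, i.e.\ to exhibit a uniform bound --- weight by weight --- on its generation degree.

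The geometric input that should produce such a bound is the Levi stratification of the commuting variety: the restriction morphism $C_{n+1}(G)\to C_n(G)$ has fibre over $\rho$ the centralizer $Z_G(\overline{\mathrm{im}\,\rho})$, and the Zariski closure of the image of a commuting tuple is an abelian subgroup of $G$ whose identity component has rank bounded by $\dim G$ and whose component group is boundedly generated. Hence the $G$-conjugacy type of this centralizer takes only finitely many values; moreover the resulting stratification of $C_n(G)$ is preserved by the spreading-out maps --- inserting identities does not change the generated subgroup --- so it stabilizes for $n$ large. On each stratum the fibration is a torsor under a fixed reductive subgroup $H$, and, $H$ being special, the stratum contributes $[\text{stratum in }C_n(G)]\cdot[H]$ to $[C_{n+1}(G)]$; unwinding the recursion expresses $[C_n(G)]$ equivariantly as a finite $\ZZ[\LL]$-combination of flag classes $[G/H]$ (which are $S_n$-trivial), of torus contributions built from $\Hom(\ZZ^n,\GG_m^{r'})=(\GG_m^{r'})^{\times n}$ (the motivic avatar of exterior powers of permutation modules, which \emph{are} finitely generated $\mathrm{FI}$-modules), and of contributions indexed by the finitely many finite groups $H/H^{\circ}$ occurring.

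Closure of finitely generated motivic $\mathrm{FI}$-modules under disjoint union, multiplication by a fixed class, and induction/restriction along the finitely many subgroup inclusions appearing in the stratification would then yield finite generation of $\{[C_n(G)]\}_n$, hence motivic representation stability; dividing by $[G]$, invertible in the completed Grothendieck ring of stacks, transports the statement to the character stacks. The hypothesis that $Z(G)$ is connected should enter precisely to keep this recursion under control, namely to prevent the ``non-toral'' strata and the disconnected component groups --- indexed, for a group such as $\SL_n$, by $\pi_1(G)$ --- from appearing with nonzero motivic multiplicity in the degrees that matter, since the virtual classes of those strata are themselves \emph{not} representation stable.

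This last point is also where the main difficulty lies: in general the stratification produces pieces that in isolation fail representation stability (already the $n$-tuples valued in a fixed finite subgroup do), so the crux of a proof is to show that connectedness of $Z(G)$ forces such pieces to be absent, or to be negligible in the appropriate weight/completion sense, and then to carry out the $S_n$-equivariant bookkeeping of which $\lambda$ occur with which motivic multiplicity. A fully general argument would moreover require a motivic strengthening of the implication ``finitely generated $\Rightarrow$ representation stable'' valid over the completed Grothendieck ring rather than over a field. Pending these, the concrete plan --- the one carried out in the sequel --- is to verify the conjecture for the groups whose virtual classes $[\Hom(\ZZ^n,G)]$ are already known in closed form, namely tori, $\GG_a$, $\GL_n$, $\PGL_n$ and products thereof, by feeding those formulas into the symmetric-function bookkeeping and checking stabilization of the motivic multiplicities directly.
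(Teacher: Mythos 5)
The load-bearing step in your plan --- ``exhibit $\{[C_n(G)]\}_n$ as a finitely generated motivic FI-module and invoke a motivic version of finitely generated $\Rightarrow$ representation stable'' --- is exactly the piece that does not exist, and you flag it yourself only as ``pending''. This is a genuine gap, not a technicality: the Grothendieck ring does not remember morphisms, so your spreading-out maps $C_m(G)\to C_n(G)$ carry no information at the level of virtual classes; moreover the paper stresses that the motivic decomposition is only defined after choosing a good set of subgroups and that the K\"unneth formula fails in general, so the Church--Ellenberg--Farb formalism has no straightforward motivic incarnation to appeal to. The paper's definition of motivic representation stability deliberately bypasses FI-modules: one must show directly that the quotients $[C_n(G)/S_{\lambda[n]}]$ stabilize in $\widehat{M_\LL^G}$. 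There is also a smaller inaccuracy in your stratification step: the fibre of $C_{n+1}(G)\to C_n(G)$ is a centralizer, which need not be reductive (already for a regular unipotent element of $\GL_2$), and your hope that connectedness of $Z(G)$ alone suppresses the disconnected strata is precisely the unproven content of the conjecture --- the paper does not derive it from connected center but simply \emph{assumes} the stronger hypothesis that all maximal abelian subgroups are connected (true for $\GL_r$ by Schur--Jacobson, false for $\SL_r$, which is the paper's counterexample and matches your own observation about tuples in a finite subgroup).

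For contrast, the route the paper actually takes where it verifies the conjecture (Theorem \ref{thm:motrepstabgengp} and its corollary) is much more economical: using finiteness of centralizers up to conjugacy in a reductive group, $C_n(G)$ is covered by finitely many pieces $\Ind_{N_G(A)}^{G}(A^n)$ over the maximal abelian subgroups $A$; since $S_n$ commutes with conjugation, $[A^n/S_{\lambda[n]}]$ is a product of symmetric powers of $A$, and when $A$ is connected its class is a monic polynomial in $\LL$, so stability follows from Lemma \ref{lem:motstabformonic} together with continuity of $\Ind$ and $\Res$ on the completed rings (Corollary \ref{cor:resindmotivic}) and Example \ref{ex:freemotrepstab}. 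For the character stacks (the $G$-equivariant statement) the extra input for $\GL_r$ is that maximal abelian subgroups are open in linear subspaces on which the conjugation action extends linearly, so the equivariant stabilization of symmetric powers comes from Corollary \ref{cor:stabsymmetric}. Your fallback of feeding known closed-form classes for tori, $\GL_n$, $\PGL_n$ into symmetric-function bookkeeping would only reprove isolated cases and is strictly weaker than this criterion; if you want to salvage your approach, the honest formulation is to replace the FI-module reduction by the paper's reduction to symmetric powers of connected maximal abelian subgroups, at which point the stratification ideas you describe become the actual proof.
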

    \item Stability with variation of the rank for free Abelian groups: in this case, we conjecture that sequence $C_n(\GL_r(k))$ satisfies motivic stability as $r$ tends to infinity.
    \begin{introconjecture}\label{introcon:D}
    Fix a positive integer $n$. Then, the sequence of representation varieties $C_n(\GL_r(k))$ and their corresponding character stacks are motivically stable.
    \end{introconjecture}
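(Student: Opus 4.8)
The plan is to trade the representation varieties for their character stacks, identify these with a moduli of modules, and reduce the statement to the analysis of a plethystic power series.

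\emph{Reformulation.} An $n$-tuple of commuting elements of $\GL_r(k)$ is precisely an $r$-dimensional module over $k[\ZZ^n] = \cO(\GG_m^n)$, so the character stack $[C_n(\GL_r(k))/\GL_r(k)]$ is canonically the moduli stack $\cM_{n,r}$ of length-$r$ torsion coherent sheaves on the torus $T = \GG_m^n$. Since $k$ is algebraically closed and $T$ is smooth of dimension $n$, the completed local ring of $T$ at every closed point is $k[[x_1,\dots,x_n]]$, and every such sheaf splits uniquely into a finite direct sum of sheaves with pairwise distinct one-point supports. I would therefore package the classes of the $\cM_{n,r}$ into a power-structure identity in the completed Grothendieck ring of stacks,
\[
\sum_{r\ge 0}[\cM_{n,r}]\,t^r \;=\; \Lambda_n(t)^{[T]} \;=\; \Lambda_n(t)^{(\LL-1)^n}, \qquad \Lambda_n(t) \;=\; \sum_{r\ge 0}\bigl[\,\cN_{n,r}/\GL_r\,\bigr]\,t^r,
\]
where the exponentiation by $[T]$ is that of the motivic power structure of Gusein-Zade, Luengo and Melle-Hern\'andez (suitably extended to $\K_0(\Stck_k)$), and $\cN_{n,r}$ is the commuting variety of $n$-tuples of nilpotent $r\times r$ matrices, so that the local factor $\Lambda_n$ is the generating series of the stacks of finite-length $k[[x_1,\dots,x_n]]$-modules. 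Making this factorization precise --- in particular reducing the $\GG_m$-directions of $T$ to affine directions and keeping honest track of automorphism groups inside $\K_0(\Stck_k)$ --- is the first technical task; the class of $C_n(\GL_r(k))$ itself is then recovered by multiplying through by the invertible element $[\GL_r(k)]$.

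\emph{Small $n$.} For $n=1$ one has $C_1(\GL_r)=\GL_r$ and $[C_1(\GL_r)/\GL_r]=1$ for every $r$, so both sequences are trivially motivically stable, and $\Lambda_1(t)=\sum_{\lambda}t^{|\lambda|}/[\Aut_\lambda]$ is the motivic Cohen--Lenstra series. For $n=2$, Baranovsky's irreducibility theorem for $\cN_{2,r}$ together with Feit and Fine's enumeration of commuting pairs (and its motivic refinement) yields a closed infinite product for $\Lambda_2(t)$ in $\LL^{\pm1}$ and $t$; substituting into the identity above reproduces the known motivic class of the commuting variety $C_2(\GL_r)$ and exhibits the dimension-normalized sequences $\LL^{-(r^2+r)}[C_2(\GL_r)]$ and $\LL^{-r}[C_2(\GL_r)/\GL_r]$ as Cauchy sequences in $\widehat{\K_0(\Stck_k)}$, whose limits one reads off from the open stratum on which the first matrix has distinct eigenvalues (a dominant-pole computation in the power series).

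\emph{General $n$, and the main obstacle.} For $n\ge3$ the argument needs two further inputs: a bound on the growth of $\dim\cN_{n,r}$ --- equivalently of $\dim C_n(\GL_r)$ --- sharp enough to pin down the correct normalizing powers of $\LL$, and enough structure of $\Lambda_n(t)$, for instance that the dimensions of its coefficients are eventually quasi-polynomial in $r$, or that $\Lambda_n$ is algebraic over $\QQ(\LL)(t)$, in order to deduce that the suitably normalized coefficients of $\Lambda_n(t)^{(\LL-1)^n}$ converge in $\widehat{\K_0(\Stck_k)}$. The principal obstacle is exactly the geometry of the commuting nilpotent variety $\cN_{n,r}$ for $n\ge3$: it can be reducible, with the number and dimensions of its components --- hence also $\dim C_n(\GL_r)$ --- unknown in general, so the very normalization appearing in the statement is not yet explicit, which is the reason the assertion is stated as a conjecture. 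A route that sidesteps the product formula is an inductive fibration argument comparing ranks $r$ and $r+1$: stratify $C_n(\GL_{r+1})$ by the isotypic structure of the associated module on $k^{r+1}$, split off a one-dimensional summand wherever one exists, and bound the codimension of the complementary locus; this reduces stability to the same kind of dimension estimates for commuting tuples, but organizes them as an honest stabilization map in the spirit of classical representation stability.
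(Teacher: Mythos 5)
Since the statement is a conjecture, neither you nor the paper proves it in full generality; what you offer is evidence in the same cases the paper actually settles ($n=1$ trivially, $n=2$), but by a genuinely different route, and you correctly locate the obstruction for $n\ge 3$ (the unknown dimension/geometry of the nilpotent commuting varieties) that keeps the general statement conjectural. The paper's $n=2$ proof stratifies $C_2(\GL_r(k))$ by the Jordan type of the first matrix, uses torsor arguments and the specialness of $\GL_r$ and of the centralizers $Z_{\mathcal{J}}$ to get the exact identity $[C_2(\GL_r(k))]=[\GL_r(k)]\,[\mathcal{C}_r]$ in $\K_0(\Stck_k)$, and then runs a motivic version of Macdonald's count of conjugacy classes to show $[\mathcal{C}_r]/\LL^r\to 1$; the limit then drops out immediately, simultaneously for the representation variety and (dividing by the invertible class $[\GL_r(k)]$) for the character stack. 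Your route instead identifies $[C_n(\GL_r)/\GL_r]$ with the stack of length-$r$ torsion sheaves on $\GG_m^n$ and invokes a power-structure factorization with exponent $(\LL-1)^n$ and punctual factor given by the stacks of commuting nilpotent $n$-tuples, so that $n=2$ becomes the motivic Feit--Fine formula; this is sound in substance (the two arguments are close cousins, both being motivic forms of ``commuting pairs $=$ group $\times$ conjugacy classes''), and its advantage is a uniform framework for all $n$ that isolates the difficulty in the local factor $\Lambda_n$. What you lose relative to the paper is effectiveness: the paper's exact product decomposition makes the limit computation a one-line consequence, whereas your version still requires (i) a careful extension of the power structure to $\K_0(\Stck_k)$ with stacky coefficients, and (ii) an actual extraction of coefficient asymptotics from $\Lambda_2(t)^{(\LL-1)^2}$, which you only gesture at via a ``dominant-pole'' remark; these gaps are fillable from the literature but are not filled in your sketch.
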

\end{itemize}

We note that Conjectures \ref{introcon:C} and \ref{introcon:D} can be thought of the algebraic versions of Theorem 1.1 and Theorem 9.6 of \cite{ramras2021homological}.

Conjectures \ref{introcon:A}, \ref{introcon:B} and \ref{introcon:C} are verified in the paper in the cases of linear algebraic groups for which virtual classes of representation varieties have been studied and thus enough techniques have been developed \cite{thesisangel, habvog20, vogel2024motivic}. These groups include the groups of $\GL_r$, $\SL_r$ and the groups, $\U_r$, of upper triangular matrices of low ranks. Namely, we have the following results.
\begin{itemize}
    \item We prove Conjecture A in the cases of $\SL_2$ and $\U_r$ (for $r\leq 5)$.
    \item We prove Conjecture B in the cases of $\GL_r$ and $\U_r$ (for any rank), see Theorem \ref{thm:conjB}.
    \item We prove Conjecture C in the case of $\GL_r$ (for any rank), see Corollary \ref{cor:conjC}.
    \item We prove Conjecture D for $n=2$, see Theorem \ref{thm:motstabrank}.
\end{itemize}

The paper is organized as follows. In Section 2, we provide a framework for motivic representation stability. To achieve this goal, we study motivic decompositions with respect to finite group actions that is a decomposition of a $G$-virtual class with respect to the irreducible rational representation of a finite group $G$, see Theorem \ref{thm:motivicdecom}. We show a variant of the K\"unneth-formula and provide a comparison with mixed Hodge structures. In Section 3, we study Conjecture \ref{introcon:A} via point-counting methods and Topological Quantum Field theories. These methods are connected via natural transformations \cite{gonzalez2023arithmetic}. In Section 4, we investigate Conjectures \ref{introcon:B} and \ref{introcon:C} in the cases of $\GL_r$ and $\SL_r$. We also provide computational tools to find the virtual classes of representation varieties and character stacks corresponding to free or free Abelian groups. Using the motivic decomposition theorem, we explicitly compute the virtual classes of the varieties of commuting $n$-tuples in $\GL_2(k)$ and $\GL_3(k)$, see Theorems \ref{thm:commntuplesgl2} and \ref{thm:commntuplegl3}. Finally, in Section 5, we prove Conjecture \ref{introcon:D} in the case of $n=2$.
\section{Preliminaries}

In this section, we revisit the notion of motivic representation stability of \cite{vogel2024motivicphd}. The key tool used in this section is a motivic decomposition theorem with respect to rational representations.

\subsection{Motivic stability in the Grothendieck rings of $G$-varieties}

Let $S$ be a variety over an algebraically closed field $k$ of characteristic 0. In this paper, we will work with $G$-varieties, namely, varieties $X$ over $S$ equipped with an action of a linear algebraic group $G$ over $k$ so that 1) the map $X\to S$ is $G$-equivariant (with the trivial action of $G$ on $S$) and 2) $X$ can be covered by $G$-equivariant open affines. Morphisms of $G$-varieties are morphisms of varieties over $S$ that are $G$-equivariant. 

\begin{definition}
    The Grothendieck ring of $G$-varieties, $\K_0(\Var^G_S)$, is the free Abelian group generated by isomorphism classes of $G$-varieties over $S$ with modulo the relations of the form $[X]=[U]+[Z]$ where $Z$ is a $G$-invariant closed subvariety of $X$ and $U$ is the corresponding $G$-invariant open complement. 
\end{definition}

\begin{remark}
    Note that if $X$ and $Y$ are $G$-varieties over $S$, then $X\times_S Y$ is equipped with a natural $G$-action, namely, the diagonal $G$-action, making $\K_0(\Var^G_S)$ a ring.
\end{remark}

In the case where $G$ is the trivial group, we obtain the Grothendieck ring of varieties over $S$, that we will denote by $\K_0(\Var_S)$. For a $G$-variety $X$ over $k$, the class $[X]\in \K_0(\Var^G_k)$ is called the motivic (or virtual) class of $X$. We denote the virtual class of $\AA^1$ (with the trivial $G$-action) by $\LL$.

Let $G$ be a finite group and $H$ a subgroup of $G$. On the level of representations, we have induction and restriction functors. Corresponding to these functors, we have maps on the corresponding Grothendieck rings.

\begin{definition}
    Let $G$ be a finite algebraic group over $k$ and $H$ a subgroup of $G$. Let $S$ be a variety over $k$. We define the restriction functor
    \[\Res_H^G: \Var_S^G\to \Var_S^H\]
    as the functor that regards a $G$-variety an $H$-variety under the inclusion, and we define the induction functor
    \[\Ind_H^G: \Var_S^H\to \Var_S^G\]
    as the functor that maps an $H$-variety $X$ over $S$ to the $G$-variety $(G\times X)/H$ (here $H$ acts diagonally on $G\times X$). The resulting variety is indeed a $G$-variety with action given by multiplication on the factor of $G$.
\end{definition}


It is easy to see that these functors descend to the Grothendieck ring of varieties providing maps of Abelian groups
\[\Res_H^G: \K_0(\Var_S^G)\to \K_0(\Var_S^H)\quad\mbox{and}\quad \Ind_H^G: \K_0(\Var_S^H)\to \K_0(\Var_S^G).\]

A little bit more is true. The restriction map $\Res_H^G: \K_0(\Var_S^G)\to \K_0(\Var_S^H)$ is a ring homomorphism and it can be defined for any morphism of linear algebraic groups $H\to G$. However, for general linear algebraic groups, more care is needed in the case of the induction functor. In fact, we may face three kinds of problems: 1) the GIT quotient appearing in the induction functor may not exist, 2) the GIT quotient may not be a variety, and 3) the GIT quotient may not be motivic. Here, motivic means that if $G$ is an $S$-variety with an $H$-action, and $Z$ is an equivariant closed subvariety, then the identity holds
\[[\Ind_H^G(Z)]+[\Ind_H^G(X\setminus Z)]=[\Ind_H^G(X)]\]
in the Grothendieck ring $\K_0(\Var^G_S)$.

However, in the case when $H$ is a closed subgroup of a linear algebraic group $G$, all these issues are resolved. 

\begin{proposition}
    Let $G$ and $G'$ be linear algebraic groups, and $\rho: G'\to G$ be a homomorphism of algebraic groups over $k$. Let $S$ be a variety over $k$. Then, the restricting the action provides a functor
    \[\Res_{G'}^G: \Var_S^{G}\to \Var_S^{G'}\]
    that descends to a map of rings
    \[\Res_{G'}^G:\K_0(\Var_S^G)\to \K_0(\Var_S^{G'}).\]
    Furthermore, let $H$ be a closed subgroup of $G$. Then, the induction functor 
    \[\Ind_{H}^G: \Var_S^H\to \Var_S^G\]
    defined by sending an $H$-variety $X$ over $S$ to the variety $(G\times X)/H$ descends to a map of Abelian groups
    \[\Ind_H^G: \K_0(\Var_S^H)\to \K_0(\Var_S^G).\]
\end{proposition}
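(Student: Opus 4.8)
The statement is really two independent claims: that restriction along a group homomorphism descends to a ring map, and that induction along a closed subgroup inclusion descends to a map of abelian groups. I would treat these separately, since the restriction part is essentially formal while the induction part requires the existence/motivicity input already recalled in the excerpt.

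For restriction, the key point is that the functor $\Res_{G'}^G$ is defined on the nose at the level of categories of $G$-varieties, simply by composing the action $G\times X\to X$ with $\rho\times\id_X$. I would first check this is well-defined: the structure map $X\to S$ stays $G'$-equivariant (it was $G$-equivariant and $G'$ acts through $G$), and a $G$-equivariant affine open cover is automatically a $G'$-equivariant one, so the conditions in the definition of a $G'$-variety hold. Then I would verify that $\Res_{G'}^G$ respects the scissor relations: if $Z\subseteq X$ is a $G$-invariant closed subvariety with open complement $U$, then $Z$ is also $G'$-invariant with the same complement, so $[X]=[U]+[Z]$ is sent to a valid relation in $\K_0(\Var_S^{G'})$. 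Hence $\Res_{G'}^G$ descends to a homomorphism of abelian groups. Finally, multiplicativity: since $X\times_S Y$ carries the diagonal $G$-action and restricting the diagonal $G$-action is the diagonal of the restricted actions, $\Res_{G'}^G([X][Y]) = \Res_{G'}^G[X]\cdot\Res_{G'}^G[Y]$, and the unit $[S]$ (with trivial action) is preserved; so it is a ring homomorphism.

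For induction, the first task is to confirm that $\Ind_H^G$ is actually a functor $\Var_S^H\to\Var_S^G$ — i.e. that $(G\times X)/H$ exists as a variety over $S$, is equipped with the asserted $G$-action, and that a morphism of $H$-varieties induces a morphism of the quotients. Here I would invoke exactly the facts recalled in the paragraph preceding the proposition: since $H$ is a closed subgroup of the linear algebraic group $G$, the quotient $(G\times X)/H$ exists by \cite{Popov1994}, the residual $G$-action descends from left multiplication on the $G$-factor, the structure map to $S$ factors through the quotient (the $H$-action on $X$ is over $S$, so $G\times X\to S$ is $H$-invariant), and one checks a $G$-equivariant affine open cover exists by pulling back such covers of $G/H$. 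Functoriality on morphisms is immediate from the universal property of the quotient. Then, to descend to Grothendieck rings, I would show $\Ind_H^G$ sends scissor relations to scissor relations: given an $H$-invariant closed $Z\subseteq X$ with complement $U$, the quotient $(G\times Z)/H$ is a $G$-invariant closed subvariety of $(G\times X)/H$ whose open complement is $(G\times U)/H$ — this uses that $G\times(-)$ is flat (indeed an open/closed decomposition is preserved under the flat base change $G\times X\to X$) and that taking the free quotient by $H$ preserves the decomposition because $H$ acts freely and the quotient map is open. This yields $\Ind_H^G[X] = \Ind_H^G[U] + \Ind_H^G[Z]$, so $\Ind_H^G$ descends to a homomorphism of abelian groups. (It is not claimed to be a ring map, and indeed it is not in general.)

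**Main obstacle.** The genuinely nontrivial content is not in the bookkeeping with scissor relations but in the geometric input for induction: that $(G\times X)/H$ exists and is a variety (rather than merely an algebraic space or a stacky quotient), and that the class of this quotient only depends on the class of $X$ — equivalently, that the quotient map $G\times X\to(G\times X)/H$ behaves well enough (e.g. is a Zariski-locally trivial $H$-bundle, or at least that the quotient is "motivic" in the sense that it glues from quotients of affines) for the decomposition $[X]=[U]+[Z]$ to be preserved. This is precisely what is being outsourced to \cite{Popov1994, thesisangel} — in particular Theorem 4.2.11 of \cite{thesisangel} — and I would not reprove it; I would simply cite it and check that its hypotheses (closed subgroup, free action) are met in our situation, and that the cover of $X$ by $H$-equivariant affines together with the cover of $G/H$ by affines assembles into a cover of $(G\times X)/H$ by affines, so that the induced variety lies in $\Var_S^G$ as required.
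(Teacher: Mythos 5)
Your proposal is correct and follows essentially the same route as the paper, which also treats restriction as a formal check of scissor relations and multiplicativity, and for induction relies on exactly the cited inputs (existence of $(G\times X)/H$ for a closed subgroup via \cite{Popov1994} and motivicity of the free quotient via Theorem 4.2.11 of \cite{thesisangel}). Your additional bookkeeping that induction sends a $G$-invariant closed/open decomposition of $X$ to one of $(G\times X)/H$ is the same implicit verification the paper leaves to the reader.
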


\begin{proof}
    We focus on the case of the induction map, the restriction map is easy.
    
    In the case when $H$ is a closed subgroup of $G$, the quotient $(G\times X)/H$ exists and it is a variety by \cite{Popov1994} (resolving problems 1) and 2)). Furthermore, if $Z$ is an $H$-equivariant closed subvariety of $X$, then $G\times Z$ is an $H$-equivariant closed subvariety of $G\times X$. Since the action of $H$ is free on $G\times X$, the GIT quotient is motivic (see, for instance, Theorem 4.2.11 of \cite{thesisangel}), meaning that
    \[[G\times Z/H]+[G\times(X\setminus Z)/H]=[G\times X/H].\]
    Finally, the $G\times Z/H$ (and $G\times(X\setminus Z)/H$) is a $G$-equivariant closed (and open) subvarieties of $G\times X/H$, we have that
    \[[\Ind_H^G(Z)]+[\Ind_H^G(X\setminus Z)]=[\Ind_H^G(X)]\]
in the Grothendieck ring $\K_0(\Var^G_S)$ proving our statement.
\end{proof}

\subsubsection{Motivic stability}

In this paper, we are concerned with families of varieties and their limiting virtual class. Explicitly, we consider the localization $M_\LL^G:=\K_0(\Var^G_k)[\LL^{-1}]$. This ring has a natural increasing filtration given by the powers of $\LL$:
\[0\subseteq \dots\subseteq F_nM_\LL^G\subseteq F_{n+1}M_\LL^G\subseteq\dots \subseteq M_\LL^G\]
where $F_nM_\LL^G$ is the subgroup generated by the elements of $M_\LL^G$ of the form $\frac{[X]}{\LL^s}$ where $X$ is an irreducible variety of dimension at most $s+n$. We denote the completion of $M_\LL^G$ with respect to this filtration by $\widehat{M_\LL^G}$.

The ring $\widehat{M_\LL^G}$ is equipped with a topology coming from the completion that allows us to consider limits of families of virtual classes as in \cite{vakil2015discriminants}.

\begin{definition}
    We say that a family of $G$-varieties $\{X_n\}_n$ is motivically stable if the limit
    \[\lim_{n\to \infty} \frac{[X_n]}{\LL^{\dim X_n}}\]
    exists in $\widehat{M_\LL^G}$.
\end{definition}

It is easy to see that the restriction and induction maps respect the filtrations, meaning that if $H$ is a closed subgroup of a linear algebraic group $G$ over $k$, then 
\[\Res_H^G(F_n M_\LL^G)\subseteq F_n M_\LL^H\]
and 
\[\Ind_H^G(F_n M_\LL^H)\subseteq F_{n+c}M_\LL^G\]
where $c$ is the codimension of $H$ in $G$. Therefore, we obtain the following.

\begin{corollary}\label{cor:resindmotivic}
    Let $H$ be a closed subgroup of a linear algebraic group $G$ over $k$. Then, the restriction and induction maps provide a continuous group homomorphism:
    \[\Res_H^G: \widehat{M_\LL^G}\to \widehat{M_\LL^H}\quad\mbox{and}\quad \Ind_H^G: \widehat{M_\LL^H}\to \widehat{M_\LL^G}.\]
    In particular, we have that
    \begin{itemize}
        \item if the family of $G$-varieties $\{X_n\}_n$ is motivically stable, then the family of $H$-varieties $\{\Res_H^G(X_n)\}_n$ is also motivically stable,
        \item if the family of $H$-varieties $\{Y_n\}_n$ is motivically stable, then the family of $G$-varieties $\{\Ind_H^G(Y_n)\}_n$ is also motivically stable.
    \end{itemize}
\end{corollary}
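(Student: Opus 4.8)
The plan is to deduce the statement from two facts recorded just above it: that $\Res_H^G$ and $\Ind_H^G$ descend to the localizations, and that they are compatible with the $\LL$-filtrations up to a fixed shift. First I would check that both functors are defined on the localizations $M_\LL^G=\K_0(\Var_k^G)[\LL^{-1}]$ and $M_\LL^H$. For $\Res_H^G$ this is automatic, since it is a ring homomorphism with $\Res_H^G(\LL)=\LL$, hence extends uniquely to the localization at $\LL$. For $\Ind_H^G$, which is only a homomorphism of abelian groups, I would use that it commutes with multiplication by $\LL$: for an $H$-variety $X$ over $S$ there is an isomorphism of $G$-varieties $(G\times(\AA^1\times X))/H\cong\AA^1\times\bigl((G\times X)/H\bigr)$, the $H$-action on the $\AA^1$-factor being trivial, so $\Ind_H^G(\LL\cdot\alpha)=\LL\cdot\Ind_H^G(\alpha)$ and one may set $\Ind_H^G(\alpha/\LL^s):=\Ind_H^G(\alpha)/\LL^s$.

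Next, the inclusions $\Res_H^G(F_nM_\LL^G)\subseteq F_nM_\LL^H$ and $\Ind_H^G(F_nM_\LL^H)\subseteq F_{n+c}M_\LL^G$, with $c=\dim G-\dim H$ the codimension of $H$ in $G$, hold because $\Res_H^G$ leaves the underlying variety — hence its dimension and irreducibility — unchanged, while for an irreducible $H$-variety $X$ the variety $(G\times X)/H$ is irreducible of dimension $\dim X+c$, being the quotient of $G\times X$ by a free $H$-action. Now a homomorphism of filtered abelian groups that respects the filtrations up to a finite shift is automatically (uniformly) continuous for the completion topologies: the basic neighbourhoods of $0$ are the subgroups $F_{-N}$ for $N\to\infty$, and $f(F_{-N})\subseteq F_{-N+c}$ is again such a neighbourhood since $c\geq 0$. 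Hence $\Res_H^G$ and $\Ind_H^G$ pass to the completions, giving continuous group homomorphisms $\Res_H^G\colon\widehat{M_\LL^G}\to\widehat{M_\LL^H}$ and $\Ind_H^G\colon\widehat{M_\LL^H}\to\widehat{M_\LL^G}$, the former still a ring homomorphism.

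Finally, for the two stability claims: if $\lim_n[X_n]/\LL^{\dim X_n}$ exists in $\widehat{M_\LL^G}$, then, since $\dim\Res_H^G(X_n)=\dim X_n$, applying the continuous map $\Res_H^G$ term by term shows that $\lim_n[\Res_H^G(X_n)]/\LL^{\dim\Res_H^G(X_n)}$ exists, i.e.\ $\{\Res_H^G(X_n)\}_n$ is motivically stable. Dually, if $\lim_n[Y_n]/\LL^{\dim Y_n}$ exists in $\widehat{M_\LL^H}$, then from $\dim\Ind_H^G(Y_n)=\dim Y_n+c$ we get $[\Ind_H^G(Y_n)]/\LL^{\dim\Ind_H^G(Y_n)}=\LL^{-c}\cdot\Ind_H^G\bigl([Y_n]/\LL^{\dim Y_n}\bigr)$, which converges because both $\Ind_H^G$ and multiplication by $\LL^{-c}$ are continuous on the completions.

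The argument is essentially formal; the one point requiring care is the bookkeeping of the shift $c$, which must enter consistently in the filtration bound for $\Ind_H^G$, in the normalising power $\LL^{\dim\Ind_H^G(Y_n)}$, and in the continuity estimate, so that the shifts cancel and no stray power of $\LL$ survives in the limit. A secondary subtlety worth spelling out is the well-definedness of $\Ind_H^G$ on the localized ring, which is exactly what the isomorphism $(G\times(\AA^1\times X))/H\cong\AA^1\times\bigl((G\times X)/H\bigr)$ above is for; everything after that is a standard property of completions of filtered abelian groups.
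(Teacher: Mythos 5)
Your argument is correct and is essentially the paper's own route: the paper likewise deduces the corollary directly from the filtration inclusions $\Res_H^G(F_nM_\LL^G)\subseteq F_nM_\LL^H$ and $\Ind_H^G(F_nM_\LL^H)\subseteq F_{n+c}M_\LL^G$, with your additional details (the extension of $\Ind_H^G$ over $\LL^{-1}$ via $(G\times(\AA^1\times X))/H\cong\AA^1\times((G\times X)/H)$, and the cancellation of the shift $c$ against $\LL^{\dim\Ind_H^G(Y_n)}$) being exactly the bookkeeping the paper leaves implicit. One minor overstatement: $(G\times X)/H$ need not be irreducible when $G$ is disconnected, but only the dimension bound $\dim X+c$ is needed (stratify into irreducible components), so the filtration inclusion and the rest of your proof are unaffected.
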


In the context of motivic stability, one of the most important sequence of varieties that has been studied is the sequence of symmetric powers $\{\Sym^n_G (X)\}_n$ of $G$-varieties: the $n$-th symmetric power of a variety $X$, $\Sym^n_G X:=X^n/S_n$, is naturally equipped with a $G$-variety structure induced by the diagonal action. 




The following lemma is key in order to establish motivic stability. It is a straightforward adaptation of Proposition 4.2 in \cite{vakil2015discriminants}.

\begin{lemma}[Proposition 7.1.12 of \cite{vogel2024motivicphd}]\label{lem:openstabilizeallstabilize}
    Let $X$ be a $G$-variety and $Z\subset X$ a closed $G$-invariant subvariety of small dimension: $\dim Z<\dim X$. Then, the symmetric powers of $X$ stabilize if and only if the symmetric powers of the open complement $U=X\setminus Z$ stabilize. Moreover, we have
    \[\lim_{n\to \infty}\frac{\Sym_G^n(X)}{\LL^{n\dim X}}=Z_G(Z, \LL^{-\dim X})\lim_{n\to \infty}\frac{\Sym_G^n(U)}{\LL^{n\dim U}}\]
    where $Z_G(Z,q^{-\dim X})$ denotes the motivic zeta function of the $G$-variety $X$ in the sense of \cite{kapranov2000elliptic}.
\end{lemma}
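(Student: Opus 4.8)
The plan is to upgrade the classical scissor relation for symmetric powers to the $G$-equivariant setting and then run a convergence argument in $\widehat{M_\LL^G}$. Put $d=\dim X=\dim U$ and $e=\dim Z$, so $e\le d-1$ (the case $Z=\emptyset$ being trivial). First I would record the stratification of $\Sym^n_G X=X^n/S_n$ by how many of the $n$ points land in the closed $G$-invariant subvariety $Z$: the stratum ``exactly $b$ points in $Z$'' is $G$-invariant and locally closed, and is $G$-isomorphic to $\Sym^{n-b}_G U\times\Sym^b_G Z$, since the diagonal $G$-action commutes with the $S_n$-action and both descend to this stratification. Passing to $\K_0(\Var^G_k)$ this gives, for every $n$,
\[ [\Sym^n_G X]\;=\;\sum_{a+b=n}[\Sym^a_G U]\,[\Sym^b_G Z], \]
equivalently $Z_G(X,t)=Z_G(U,t)\,Z_G(Z,t)$ for the motivic zeta functions in $\K_0(\Var^G_k)[[t]]$. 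This is the only place $G$-equivariance really intervenes, and it is essentially formal.

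Next I would pass to normalized classes. Write $s^X_n=[\Sym^n_G X]/\LL^{nd}$, $s^U_n=[\Sym^n_G U]/\LL^{nd}$, and $z_b=[\Sym^b_G Z]/\LL^{bd}$. Since $\dim\Sym^b_G Z\le be$ we have $z_b\in F_{b(e-d)}M_\LL^G$ with $e-d\le -1$, so $z_b\to 0$ and $\sum_{b\ge 0}z_b$ converges in $\widehat{M_\LL^G}$, by definition to $Z_G(Z,\LL^{-d})=Z_G(Z,\LL^{-\dim X})$. Dividing the scissor relation by $\LL^{nd}$ turns it into the convolution $s^X_n=\sum_{a+b=n}s^U_a z_b$. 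For the reverse implication I would invert $Z_G(Z,t)$ as a formal power series (legitimate since its constant term $[\Sym^0_G Z]=1$ is a unit); writing $Z_G(Z,t)^{-1}=\sum_b W_b t^b$, an easy induction using $F_mM_\LL^G\cdot F_{m'}M_\LL^G\subseteq F_{m+m'}M_\LL^G$ gives $W_b\in F_{be}M_\LL^G$, hence $w_b:=W_b\LL^{-bd}\in F_{b(e-d)}M_\LL^G$ with $w_b\to 0$ and $\sum_b w_b=Z_G(Z,\LL^{-\dim X})^{-1}$, and the relation $Z_G(U,t)=Z_G(X,t)\,Z_G(Z,t)^{-1}$ becomes $s^U_n=\sum_{a+b=n}s^X_a w_b$.

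It then suffices to prove an elementary sublemma about complete filtered abelian groups: if $(a_m)$ lies in a fixed $F_N$ with $a_m\to A$, and $\sum_b c_b$ converges with $c_b\to 0$, then $\sum_{a+b=n}a_a c_b\to A\cdot\sum_b c_b$ as $n\to\infty$. (Split at $b\le M$ and $b>M$: the tail lies in a piece of the filtration that can be made arbitrarily low uniformly in $n$, because $c_b\to 0$ and the $a_a$ are uniformly bounded in filtration, while for each fixed $b$ the corresponding head term tends to $Ac_b$; then a standard three-term estimate finishes it.) Applying this with $(c_b)=(z_b)$, and using that $s^U_a\in F_0M_\LL^G$ for all $a$ because $\dim\Sym^a_G U=ad$, I get the ``if'' direction together with
\[ \lim_{n\to\infty}\frac{[\Sym^n_G X]}{\LL^{nd}}\;=\;Z_G(Z,\LL^{-\dim X})\cdot\lim_{a\to\infty}\frac{[\Sym^a_G U]}{\LL^{ad}}, \]
which is the claimed identity since $\dim X=\dim U$; applying the sublemma with $(c_b)=(w_b)$ to $s^U_n=\sum_{a+b=n}s^X_a w_b$ (now with $s^X_a\in F_0M_\LL^G$) gives the ``only if'' direction.

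I expect the main obstacle to be purely bookkeeping: tracking filtration degrees so that $z_b$ and $w_b$ genuinely tend to zero, checking that the normalized classes $s^U_a$ (resp.\ $s^X_a$) stay in a fixed piece of the filtration so that the sublemma applies, and making precise the identification of $\sum_b z_b$ with the evaluation $Z_G(Z,\LL^{-\dim X})$ of the Kapranov motivic zeta function (including that substitution $t\mapsto\LL^{-d}$ is compatible with multiplication of convergent series). Nothing here is structurally deeper than Proposition 4.2 of \cite{vakil2015discriminants}; the content is checking that every step of that argument is compatible with the diagonal $G$-action, which it is.
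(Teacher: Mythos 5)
Your argument is correct, and it is essentially the same route the paper relies on: the paper gives no independent proof but cites Vakil--Wood's Proposition 4.2, whose proof is exactly this multiplicativity $Z_G(X,t)=Z_G(U,t)\,Z_G(Z,t)$ of the Kapranov zeta function combined with a convergence/Cauchy-product argument in the completed filtered ring, with the $G$-equivariance entering only through the $G$-invariance of the strata, just as you observe.
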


\subsubsection{Motivic stability in the Grothendieck ring of stacks}

In Section \ref{sec:surface} a slightly different motivic stability will be considered. For that, we consider Ekedahl's version of the Grothendieck ring of stacks defined as follows \cite{eke09}.

\begin{definition}
    The Grothendieck ring of stacks $\K_0(\Stck_k)$ is defined as the Abelian group generated by stacks of finite type over $k$ with affine stabilizers module the relations 1) $[\mathfrak{X}]=[\mathfrak{Z}]+[\mathfrak{U}]$ where $\mathfrak{Z}$ is a closed substack of $\mathfrak{X}$ with open complement $\mathfrak{U}$ and 2) the relations of the form 
    \[ [\mathfrak{E}] = [ \AA^n_k \times \mathfrak{X} ] \]
    for every vector bundle $\mathfrak{E} \to \mathfrak{X}$ of rank $n$. 
\end{definition}

Ekedahl shows that the Grothendieck ring of stacks over $k$ is isomorphic to the localization of the Grothendieck ring of varieties, $\K_0(\Var_{k})$, by inverting the class of the affine line $\LL$ and the classes of the form $\LL^n-1$.

We define motivic stability in this ring parallel to the case of the Grothendieck ring of varieties. Namely, we consider the natural increasing filtration given by the powers of the symbols $\LL^t-1$:
\[0\subseteq \dots\subseteq F_nK_0(\Stck_k)\subseteq F_{n+1}K_0(\Stck_k)\subseteq\dots \subseteq K_0(\Stck_k)\]
where $F_nK_0(\Stck_k)$ is the subgroup generated by the elements of $K_0(\Stck_k)$ of the form $\frac{[X]}{q^s\prod (\LL^i-1)^{s_i}}$ where $X$ is an irreducible variety, the product in the denominator is finite and $X$ is of dimension at most $n+s+\sum i\cdot s_i$. We denote the completion of $K_0(\Stck_k)$ with respect to this filtration by $\widehat{\K_0(\Stck_k)}$.

\begin{remark}\label{rem:epolymotmeasure}
    The E-polynomial is a motivic measure of $\K_0(\Var_\CC)$ meaning that it provides a ring homomorphism $\K_0(\Var_\CC)\to \ZZ[u,v]$. It sends a smooth and projective variety $X$ to its E-polynomial $E(X):=\sum_{i,j} h^{i,j}(X)u^iv^j\in \ZZ[u,v]$ (where $h^{i,j}(X)$ denotes the dimensions of the Hodge cohomology spaces $H^i(X, \Omega^j_X)$). This can be extended to any variety over $\CC$, in particular, this motivic measure sends the affine line to $E(\AA^1):=uv$. Using the above, we show that the E-polynomial can be extended to a motivic measure of $\widehat{\K_0(\Stck_\CC)}$. Indeed, consider the ring $\ZZ[u,v]$, and invert the element $u\cdot v$ in this ring. Then, we have a natural filtration given by the powers of $\frac{1}{uv}$ and one can consider the completion of the localized ring with respect to this filtration, $\widehat{\ZZ[u,v, \frac{1}{uv}]}$. Now, for the virtual class of a stack of the form $[V/\GL_n]$, we define its E-polynomial as 
    \[\frac{E(V)}{(uv)^{n^2}}\prod_{i=1}^n (1+\frac{1}{(uv)^i}+\frac{1}{(uv)^2i}+...)\]
    that is well-defined in $\widehat{\ZZ[u,v, \frac{1}{uv}]}$. Note, that the E-polynomial does not depend on the representation $[V/\GL_n]$ \cite{kres99, behdhi07}. We can see that the E-polynomial respects the filtration on $\K_0(\Stck_\CC)$, thus it provides a motivic measure $\widehat{\K_0(\Stck_\CC)}\to \widehat{\ZZ[u,v,\frac{1}{uv}]}$.
\end{remark}


\subsection{Motivic representation stability}

The goal of this section is to provide a framework in studying representation stability via virtual classes. Since, the proofs of the representation stability results \cite{ramras2021homological} for representation varieties, character varieties and character stacks do not depend on the actual cohomology theory, it is expected that representation stability holds on the motivic level. 

The difficulty in approaching this problem using the Grothendieck ring of varieties is twofold. First, it is not clear how to decompose a variety with an $S_n$-action into subvarieties corresponding to representations of $S_n$. Second, in order to do any kind of computations, one needs that this decomposition satisfies certain properties, for instance, K\"unneth-formuala, etc. 

These difficulties cannot be solved. To avoid the first problem, we have to make a choice on how to decompose a variety according to the representations of $S_n$. The choice is basically an appropriate set of subgroups in $S_n$. To deal with the second problem, we will show that, even though not every $S_n$-varieties can be used in K\"unneth-type formulas, there are still sufficient $S_n$-varieties that can be used for computational purposes.

We begin by addressing the first problem. The representations of $S_n$ are parametrized by Young tableaux: for a partition $\lambda$ of $n$, we denote the corresponding representation by $V_\lambda$. For a general partition $\lambda=(\lambda_1, \lambda_2, ...)$ (with $\lambda_1\geq \lambda_2\geq...$) and for an integer $n\geq |\lambda|+\lambda_1:=2\lambda_1+\lambda_2+...$, we denote the partition of $n$ given by $(n-|\lambda|,\lambda_1,\lambda_2,...)$ by $\lambda[n]$. In representation stability, one is interested in the stability of the representations of $V_{\lambda[n]}$.

Corresponding to a partition $\lambda=(\lambda_1, \lambda_2,...)$ of $n$, we have a subgroup
\[S_\lambda:=S_{\lambda_1}\times S_{\lambda_2}\times...\leq S_n.\]
Thus, for the sequence of representations $V_{\lambda[n]}$, we obtain a sequence of subgroups $S_{\lambda[n]}\leq S_n$. After setting up the notation, we arrive at the definition of motivic representation stability.

\begin{definition}\label{def:motrepstab}
Let $\{X_n\}_n$ be a sequence of varieties over $k$ with an action of $G\times\{S_n\}_n$ (with $G$ being a fixed linear algebraic group). We say that this sequence is \textit{motivically representation stable} if the sequences of varieties $[X_n/S_{\lambda[n]}]$ are motivically stable in $\widehat{M_\LL^G}$ for all partitions $\lambda$.
\end{definition}

In many of our cases, $G$ will be the trivial group. The following observation is key to understanding motivic representation stability.

\begin{example}\label{ex:freemotrepstab}
Let $X$ be a $G$-variety, and consider the sequence $X_n=X^n$ with the natural $G\times S_n$-action (where $S_n$ permutes the coordinates). Then, we see that
\[[X^n/S_{\lambda[n]}]=\Sym_G^{n-|\lambda|}X\times \prod_i \Sym_G^{\lambda_i}X\]
meaning that the sequence $X^n$ is motivically representation stable if and only if the sequence $\Sym^n_GX$ is motivically stable in $\widehat{M_\LL^G}$.
\end{example}

In the rest of the section, we motivate why we believe that Definition \ref{def:motrepstab} is a right definition for motivic representation stability in the context of the Grothendieck ring of varieties.

\subsection{Motivic decomposition with respect to representations}

Let $G$ be a finite group and consider, $R_\QQ(G)$, the rational representation ring of $G$ over $\ZZ$ that is the Grothendieck group of finite dimensional $\QQ$-representations of $G$ where the ring structure is induced by the tensor product of representations. We denote the trivial representation by $T_G$.

In this section, we wish to decompose a $G$-variety over a variety $S$ with respect to the rational representations of $G$, in other words, we wish to construct a ring map
\[\K_0(\Var_S^G)\to \K_0(\Var_S)\otimes_\ZZ R_\QQ(G)\]
so that for a variety $X$ over $S$ the part that corresponds to the trivial representation is $[X/G]$ (indeed, the tensor product on the right-hand side has a natural structure of a ring, because both $\K_0(\Var_S)$ and $R_\QQ(G)$ are rings). We denote the desired motivic decomposition by $[X]_G\in \K_0(\Var_S)\otimes_\ZZ R_\QQ(G)$ for a $G$-variety $X$. 

This goal will not be achieved in general. The main difficulties arise from a) the fact that the matrix of the inner product of rational representations over $\ZZ$ is not invertible, b) the lack of a motivic K\"unneth-formula, see Remark \ref{rem:counterkunneth}, and c) over-constraints coming from subgroups of $G$, see Example 3.6.9 of \cite{vogel2024motivicphd}.


The representation ring, $R_\QQ(G)$, is equipped with the standard inner product $\langle, \rangle$ that is given on two irreducible representations, $V_1$, $V_2$, by $\langle V_1, V_2\rangle=0$ if the irreducible representations are not isomorphic and $\langle V_1, V_2\rangle=1$, if the representations are isomorphic. This inner product can be extended to an inner product
\[\langle, \rangle: \K_0(\Var_S)\otimes_\ZZ R_\QQ(G)\otimes_\ZZ \K_0(\Var_S)\otimes_\ZZ R_\QQ(G)\to \K_0(\Var_S)\]
by $\langle [X_1]\otimes V_1, [X_2]\otimes V_2\rangle=[X_1][X_2]\otimes \langle V_1,V_2\rangle.$

\begin{definition}
    Let $G$ be a finite group, and let $\fH$ be a set of conjugacy
 classes of subgroups of $G$. We define the map
 \[\Psi^G_\fH: R_\QQ(G)\to \bigoplus_{[H]\in \fH}\ZZ\]
 as $[V]\mapsto \langle T_H, \Res_H^G(V)\rangle_{[H]\in \fH}$ sending a representation $V$ to the multiplicity of the trivial representation $T_H$ in $\Res_H^G(V)$. We say that a set of conjugacy classes of subgroups $\fH$ is \emph{very good} if the map $\Psi^G_\fH$ is an isomorphism of Abelian groups.
\end{definition}

\begin{remark}
    Not every finite group admits a very good set of conjugacy classes of subgroups.
\end{remark}

The remark above motivates the following definition.

\begin{definition}
We say that a set of conjugacy classes of subgroups $\fH$ is \emph{good} if the map
 \[\Psi^G_\fH\otimes \QQ: R_\QQ(G)\otimes_\ZZ \QQ\to \bigoplus_{[H]\in \fH}\QQ\]
defined as $[V]\otimes a\mapsto a\langle T_H, \Res_H^G(V)\rangle_{[H]\in \fH}$ is an isomorphism of vector spaces.
\end{definition}

\begin{remark}
    The reason we only consider conjugacy classes of subgroups is that both the multiplicity $\langle T_H, \Res_H^G(V)\rangle$ and the virtual class $[X/H]\in \K_0(\Var_S)$ only depend on the conjugacy class of the subgroup $H$.
\end{remark}

\begin{remark}
    We note that the inner product on $R_\QQ(G)$ naturally extends to an inner product 
\[\langle,\rangle: R_\QQ(G)\otimes_\ZZ \QQ\otimes_\QQ R_\QQ(G)\otimes_\ZZ \QQ\to \QQ\]
on the vector space $R_\QQ(G)\otimes_\ZZ \QQ$, and thus to an inner product on $\K_0(\Var_S)\otimes_{\ZZ}R_\QQ(G)\otimes_{\ZZ}\QQ$:
\[\langle,\rangle: (\K_0(\Var_S)\otimes_{\ZZ} R_\QQ(G)\otimes_\ZZ \QQ)\otimes_\QQ (\K_0(\Var_S)\otimes_\ZZ R_\QQ(G)\otimes_\ZZ \QQ)\to \K_0(\Var_S)\otimes_{\ZZ}\QQ\]
defined as 
\[\langle [X]\otimes V_1\otimes a, [Y]\otimes V_2\otimes b\rangle:=[X][Y]\otimes \langle V_1,V_2\rangle ab.\]
\end{remark}

We note that given a map of finite groups $H\to G$, we can extend the induction and restriction maps on the representation rings
\[\Ind_H^G: R_\QQ(H)\to R_\QQ(G) \mbox{ and } \Res_H^G: R_\QQ(G)\to R_\QQ(H)\]
to maps
\[\Ind_H^G:\K_0(\Var_S)\otimes_\ZZ\otimes R_\QQ(H)\to \K_0(\Var_S)\otimes_\ZZ\otimes R_\QQ(G)\]
\[[X]\otimes V\mapsto [X]\otimes \Ind_H^G(V)\]
and
\[\Res_H^G:\K_0(\Var_S)\otimes_\ZZ\otimes R_\QQ(G)\to \K_0(\Var_S)\otimes_\ZZ\otimes R_\QQ(H)\]
\[[X]\otimes V\mapsto [X]\otimes \Res_H^G(V).\]

\begin{remark}
    By extending the scalars, these maps can be extended to maps (which we denote the same way)
    \[\Ind_H^G:\K_0(\Var_S)\otimes_\ZZ\otimes R_\QQ(H)\otimes_\ZZ\QQ\to \K_0(\Var_S)\otimes_\ZZ\otimes R_\QQ(G)\otimes_\ZZ\QQ\]
    and
    \[\Res_H^G:\K_0(\Var_S)\otimes_\ZZ\otimes R_\QQ(G)\otimes_\ZZ\QQ\to \K_0(\Var_S)\otimes_\ZZ\otimes R_\QQ(H)\otimes_\ZZ\QQ\]
\end{remark}

Our key observation is that (very) good sets of conjugacy classes of subgroups provide motivic decompositions which is the main theorem of this section.

\begin{theorem}[Motivic Decomposition Theorem]\label{thm:motivicdecom}
    Let $\fH$ be a very good set of conjugacy classes of subgroups. Then, there exists a unique map of Abelian groups
    \[\K_0(\Var_S^G)\to \K_0(\Var_S)\otimes_\ZZ R_\QQ(G)\]
    so that for any $G$-variety $X$, the image that we denote by $[X]_G$ satisfies 
    \[\langle T_H, \Res_H^G[X]_G\rangle=[X/H]\]
    in $\K_0(\Var_S)$ for all $[H]\in \fH$.

    Similarly, if $\fH$ is a good set of conjugacy classes of subgroups, then, there exists a unique map of vector spaces
    \[\K_0(\Var_S^G)\otimes_\ZZ \QQ\to \K_0(\Var_S)\otimes_\ZZ R_\QQ(G)\otimes_\ZZ \QQ\]
    so that for any $G$-variety $X$, the image of $[X]_G$ satisfies 
    \[\langle T_H, \Res_H^G[X]_G\rangle=[X/H]\otimes 1\]
    in $\K_0(\Var_S)\otimes_\ZZ \QQ$ for all $[H]\in \fH$.
\end{theorem}

\begin{proof}
We only prove the second statement, the proof of the first statement is very similar.

Given a $G$-variety $X$, consider the potential element $[X]_G=\sum_{V} [U_V]\otimes [V]\otimes \lambda_V\in \K_0(\Var_S)\otimes_\ZZ R_\QQ(G)\otimes_\ZZ \QQ$ (where the summation goes over the isomorphism classes of irreducible rational representations $V$ of $G$). In order to satisfy our assumption, we need that
\[\sum_V [U_V]\otimes \lambda_V\langle T_H, V\rangle=[X/H]\otimes 1\]
holds for every $[H]\in \fH$. Since $\fH$ is a good set of conjugacy classes of subgroups, this equation system has a unique solution proving our statement.
\end{proof}

For many finite groups $G$, there does not exist a very good set of conjugacy classes of subgroups. Even though, a good set of conjugacy classes of subgroups exists for any finite group $G$, in many cases, there does not exist a natural choice.

\begin{example}
    Let $G$ be a finite cyclic group $\ZZ/n\ZZ$. In this case, consider the set of all subgroups $\fH$. It is easy to see that in this case, the number of subgroups and the number of rational representations agree (namely the number of divisors of $n$), and, thus, the corresponding map $\Psi_\fH\otimes \QQ$ is an isomorphism. However, other than the trivial case ($G=1$), there does not exist a very good set of conjugacy classes of subgroups.
\end{example}

\begin{example}
    Let $G_1$ and $G_2$ be finite groups and $\fH_1$ and $\fH_2$ be very good (and good respectively) sets of conjugacy classes of subgroups of $G_1$ and $G_2$ respectively. Then, $\fH_1\times \fH_2$ provides a very good (and good respectively) set of conjugacy classes of subgroups for $G_1\times G_2$. This implies that any finite Abelian group has a choice of a good set of conjugacy classes of subgroups, i.e for any finite Abelian group, a motivic decomposition exists. 
\end{example}

\begin{remark}
    In fact, a consequence of Artin's Induction Thoerem \cite[Corollary 1 in Chapter 13]{serre1977} tells us that for a finite group $G$ the number of isomorphism classes of irreducible rational representations agrees with the number of conjugacy classes of cylic subgroups of $G$. 
\end{remark}

\begin{example}\label{ex:goods_nsubgroups}
    Consider the symmetric group $S_n$, and for each partition $\lambda$ the subgroup $S_\lambda:=S_{\lambda_1}\times S_{\lambda_2}\times...$. Consider $\fH$ to be the set of conjugacy classes of the $S_\lambda$. Denote by $V_\lambda$ the irreducible representation corresponding to $\lambda$. Now, for $[V]=\sum a_\lambda [V_\lambda]\in R_\QQ(S_n)$, we have 
    \[\Psi_{S_n}^\fH(V)=\left(\langle T_{S_\lambda}, \Res_{S_\lambda}^{S_n} V\rangle\right)_{S_\lambda\in \fH}=\left(\langle \Ind_{S_\lambda}^{S_n}(T_{\S_\lambda}), V\rangle\right)_{S_\lambda\in \fH}=\left(\sum_\mu a_\mu K_{\mu,\lambda}\right)_{S_\lambda\in \fH}\]
    where $K_{\mu, \lambda}$ are the Kostka numbers (see \cite{fulton1991j} Corollary 4.39). Since, $K_{\mu, \mu}=1$ and $K_{\mu, \lambda}=0$ for $\mu<\lambda$ (with respect to the lexigraphic ordering), we have that $\Psi^{S_n}_\fH(V)$ is an isomorphism, and thus, $\fH$ provides a very good set of conjugacy classes of subgroups.
\end{example}

We show two explicit examples (the cases of $S_2$ and $S_3$) of the Motivic Decomposition Theorem following Example \ref{ex:goods_nsubgroups}.

\begin{example}\label{ex:mots2decomp}
    Consider $S_2$-varieties. In this case, Example \ref{ex:goods_nsubgroups} provides the subgroups $1$ and $S_2$. We denote the trivial representation of $S_2$ by $T$ and the sign representation by $\Sigma$. For an $S_2$-variety, the Motivic Decomposition Theorem (Theorem \ref{thm:motivicdecom}) provides a decomposition with respect to $T$ and $\Sigma$:
    \[[X]_{S_2}=[X]^+T+[X]^-\Sigma\]
    where $[X]^+:=[X/S_2]$ and $[X]^-:=[X]-[X]^+$.    
\end{example}

As an easy application, consider an elliptic curve $E$ with the negation action. In this case, $E/S_2=\PP^1$, and thus, $[E]^+=[\PP^1]$, and $[E]^-=[E]-[\PP^1]$. This shows that the Motivic Decomposition Theorem does not decompose the variety into subvarieties, but rather decomposes the virtual class into a linear combination of virtual classes of varieties.

\begin{example}\label{ex:mots3decomp}
    Consider $S_3$-varieties. In this case, Example \ref{ex:goods_nsubgroups} yields a set of three conjugacy classes of subgroups: the conjugacy classes of $1$, $S_2$, and $S_3$. We denote the trivial representation of $S_3$ by $T$, the sign representation by $\Sigma$ and the 2-dimensional irreducible representation by $R$. In this case, the Motivic Decomposition Theorem provides a decomposition:
    \[[X]_{S_3}=aT+b\Sigma+cR.\]
    We can compute $a$, $b$ and $c$ using the quotients $[X]$, $[X/S_2]$, and $[X/S_3]$. In fact, using the Motivic Decomposition Theorem tells that
    \[a+b+2c=[X], \quad a+c=[X/S_2], \quad a=[X/S_3].\]
    This means that $a$, $b$, and $c$ can be determined as
    \[a=[X/S_3],\quad b=[X]-2[X/S_2]+[X/S_3],\quad c=[X/S_2]-[X/S_3].\]
\end{example}

In the example above, we could have considered a different good set of conjugacy classes of subgroups.

\begin{example}
    Consider $S_3$-varieties, and the good set, $\fH'$, of conjugacy classes of subgroups given by $1$, $A_3$ and $S_3$. In fact, $\fH'$ is a good set of conjugacy classes of subgroups, since
    \[\langle T_{A_3}, \Res^{S_3}_{A_3}T\rangle=1, \langle T_{A_3}, \Res^{S_3}_{A_3}\Sigma\rangle=1, \langle T_{A_3}, \Res^{S_3}_{A_3}R\rangle=0;\]
    and
    \[\langle T_{1}, \Res^{S_3}_{1}T\rangle=1, \langle T_{1}, \Res^{S_3}_{1}\Sigma\rangle=1, \langle T_{1}, \Res^{S_3}_{1}R\rangle=2.\]
    Note that this computation also shows that $\fH'$ is NOT a very good set of conjugacy classes of subgroups. Using this good set of conjugacy classes of subgroups, we have
    \[[X]_{S_3}=aT+b\Sigma+cR\]
    where $a=[X/S_3]$, $b=[X/A_3]-[X/S_3]$, and $c=\frac{1}{2}([X]-[X/A_3]+[X/S_3])$. The classes $[X]$, $[X/S_2]$, $[X/A_3]$ and $[X/S_3]$ are in general independent in $\K_0(\Var_S)$ (see \cite{Sawin}). This shows that using $\fH'$ we obtain a different motivic decomposition than in Example \ref{ex:mots3decomp}. This illustrates that the motivic decomposition in Theorem \ref{thm:motivicdecom} indeed depends on the choice of a good set of conjugacy classes of subgroups.
\end{example}

Using the specific choice of a good set of conjugacy classes of subgroups of $S_n$, we have the following statement that motivates Definition \ref{def:motrepstab}.

\begin{theorem}[Proposition 7.3.3 of \cite{vogel2024motivicphd}]\label{thm:motrepstabequivalent}
    Let $\{X_n\}_n$ be a sequence of varieties over $k$ equipped with an action of the symmetric groups $\{S_n\}_n$. Then, $\{X_n\}_n$ is motivically representation stable, if and only if, for any partition $\lambda$, in the motivic decomposition, the coefficient of $[X_n]_{S_n}$ of $V_{\lambda[n]}$ is motivically stable. In other words, the motivic decomposition is motivically stable.\qed
\end{theorem}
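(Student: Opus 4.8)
The plan is to translate the equivalence into a statement about finitely many linear equations relating two families of classes: on one side the ``geometric'' classes $[X_n / S_{\lambda[n]}]$ appearing in Definition \ref{def:motrepstab}, and on the other side the coefficients of $V_{\lambda[n]} \otimes 1$ in the motivic decomposition $[X_n]_{S_n}$ produced by the good set $\fH = \{[S_\mu]\}$ of the previous example. The bridge between them is exactly the defining property of the motivic decomposition, namely
\[
[X_n / S_\mu] = \langle T_{S_\mu}, \Res_{S_\mu}^{S_n} [X_n]_{S_n} \rangle ,
\]
combined with the Frobenius reciprocity / Kostka-number computation of the last example, which shows that the change-of-basis matrix between the two families is unitriangular with respect to the dominance (lexicographic) order on partitions. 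First I would fix a partition $\lambda$ and a bound $d$, and restrict attention to partitions $\mu$ with $|\mu| \le d$; for $n$ large enough, $\mu \mapsto \mu[n]$ is a bijection onto the partitions of $n$ whose part-structure below the first part is bounded by $d$, and the relevant Kostka matrix $(K_{\mu[n],\,\nu[n]})$ is independent of $n$ once $n$ is large (this ``eventual constancy'' is the standard stabilization phenomenon for Kostka numbers in the representation-stability range).

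Next I would set up the two directions. For the forward direction, assume $\{X_n\}_n$ is motivically representation stable, i.e.\ each sequence $\{[X_n/S_{\lambda[n]}]/\LL^{\dim}\}_n$ converges in $\widehat{M_\LL^G}$. Writing the coefficient of $V_{\nu[n]}\otimes 1$ in $[X_n]_{S_n}$ as an explicit finite $\QQ$-linear combination of the classes $[X_n/S_{\mu[n]}]$ — obtained by inverting the (constant, unitriangular, hence invertible over $\QQ$) Kostka matrix — one sees that each such coefficient is a fixed $\QQ$-linear combination, with $n$-independent coefficients, of finitely many convergent sequences; since $\widehat{M_\LL^G}$ is a topological $\QQ$-vector space (we have inverted $\LL$, and passing to a completion of a filtered group is linear and continuous), the limit exists. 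For the converse, assume each coefficient sequence converges; then inverting the linear system the other way expresses $[X_n/S_{\lambda[n]}] = \sum_{\mu} c_{\lambda\mu}\,(\text{coefficient of } V_{\mu[n]})$ with $n$-independent $c_{\lambda\mu} \in \QQ$ and the sum finite (only $\mu$ with $\lambda \preceq \mu$, $|\mu| \le |\lambda|$ occur by unitriangularity), and continuity again gives convergence of $[X_n/S_{\lambda[n]}]/\LL^{\dim X_n}$. One must check the dimension normalizations match on both sides — but $\dim X_n$ is the same for every term since $S_{\lambda[n]}$ acts with finite stabilizers generically, so $\dim(X_n/S_{\lambda[n]}) = \dim X_n$, and likewise the motivic decomposition lives over $S = \Spec k$ with the same $\LL$-adic normalization; so the filtration-degree bookkeeping is uniform.

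The main obstacle I anticipate is not the linear algebra but making precise the claim that the Kostka matrix $\bigl(K_{\mu[n],\nu[n]}\bigr)_{|\mu|,|\nu|\le d}$ stabilizes for $n \gg 0$ and that its inverse over $\QQ$ has the stated support, so that the finite linear systems really do become $n$-independent; this is where one invokes the combinatorics behind representation stability for $S_n$ (Church–Ellenberg–Farb / the structure of FI-modules), but at the level of Kostka numbers it is elementary: $K_{\mu[n],\nu[n]}$ counts semistandard tableaux of shape $\mu[n]$ and content $\nu[n]$, and for $n$ large the long first rows contribute only in a bounded, $n$-independent way. A second, more bookkeeping-type subtlety is verifying that $\langle T_{S_\mu}, \Res_{S_\mu}^{S_n}(-)\rangle$ applied termwise to the decomposition $[X_n]_{S_n} = \sum_V [U_V]\otimes[V]\otimes\lambda_V$ genuinely recovers $[X_n/S_\mu]$ as an honest virtual class (not merely after $\otimes\,\QQ$); since the previous lemma already constructs the decomposition over $\QQ$ and $\fH$ is good, this is guaranteed, and the equivalence we are proving is a statement about convergence in $\widehat{M_\LL^G}$, which is a $\QQ$-algebra after inverting $\LL$ — so working rationally throughout is harmless. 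I would therefore present the argument as: (i) recall the good set $\fH=\{S_\mu\}$ and the Kostka change-of-basis; (ii) prove eventual constancy of the truncated Kostka matrix; (iii) invert it over $\QQ$ and deduce the two implications by continuity of finite $\QQ$-linear combinations in $\widehat{M_\LL^G}$.
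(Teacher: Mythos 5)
Your argument is essentially the intended one: the paper itself defers the proof to Proposition 7.3.3 of \cite{vogel2024motivicphd}, but the example immediately preceding the theorem (goodness of $\fH=\{[S_\mu]\}$ via unitriangularity of the Kostka matrix) is exactly the mechanism you use, and your key observations — that by the dominance-order support of Kostka numbers the system relating $\{[X_n/S_{\mu[n]}]\}_{|\mu|\le d}$ to the coefficients of $V_{\rho[n]}$ with $|\rho|\le d$ is finite and closed, that $K_{\rho[n],\mu[n]}$ is eventually constant in $n$, and that one then transfers convergence in both directions by continuity of fixed finite linear combinations — are the right proof. The only slip is your justification for working rationally: inverting $\LL$ does not make $\widehat{M_\LL^G}$ a $\QQ$-vector space (integers are not inverted), but this is harmless since the decomposition coefficients already live in $\K_0(\Var_k^G)\otimes_\ZZ\QQ$, so their stability must be read in the completion of $M_\LL^G\otimes_\ZZ\QQ$, where $\QQ$-linear combinations of convergent sequences do converge; also note that $\dim(X_n/S_{\lambda[n]})=\dim X_n$ simply because quotient maps by finite groups are finite and surjective, with no genericity assumption on stabilizers needed.
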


\begin{proof}
    For the sake of completeness, we include the proof.
    
    Assume first that the sequence of varieties $\{X_n\}_n$ is motivically representation stable. Consider the sequence of partitions $\lambda[n]$. The partitions $\mu$ that satisfy $\mu>\lambda[n]$ with respect to the lexicographic ordering are of the form $\mu=\lambda'[n]$ for some partition $\lambda'>\lambda$. Therefore, the coefficient of $\lambda[n]$ of $[X_n]_{S_n}$ in the motivic decomposition is a $\ZZ$-linear combination of the classes $[X_n//S_{\lambda'[n]}]$ implying one direction.

    The proof of the other direction is similar. Assume now, that the coefficients of $[X_n]_G$ are motivically stable. We proceed by induction. First, $[X_n//S_n]$ is motivically stable, because that is the coefficient of the trivial representation. Now, assume that we showed that $[X_n//S_{\lambda'[n]}]$ are motivically stable for all $\lambda'>\lambda$. The coefficient of $\lambda[n]$ in the motivic decomposition is a $\ZZ$-linear combination of $[X_n//S_{\mu[n]}]$ with $\mu\geq \lambda$, moreover the coefficient of $[X_n//S_{\lambda[n]}]$ is 1 in this linear combination. This means that $[X_n//S_{\lambda[n]}]$ is a $\ZZ$-linear combination of the coefficients of $[X_n]_G$ for representations $\mu[n]$ with $\mu\geq \lambda$ implying the other direction via the induction.
\end{proof}

\subsubsection{K\"unneth-formula}

The vector space $\K_0(\Var_S^G)\otimes_\ZZ R_\QQ(G)\otimes_\ZZ \QQ$ has a natural ring structure coming from the ring structures of $\K_0(\Var_S)$ and $R_\QQ(G)$ and $\QQ$. In this section, we investigate K\"unneth-tpye formulae for motivic decompositions, namely whether the multiplication structure on $\K_0(\Var_S^G)\otimes_\ZZ R_\QQ(G)\otimes_\ZZ \QQ$ is compatible with the motivic decomposition.

We show that although the K\"unneth-formula does not hold in general (see Remark \ref{rem:counterkunneth}) for the motivic decomposition, it holds for simple cases (Theorem \ref{thm:motkunneth}). To show these results, we start with an easy lemma.

\begin{lemma}[Lemma 3.6.18. of \cite{vogel2024motivicphd}]
    Let $G$ be a finite algebraic group, $\fH$ a good set of conjugacy classes of subgroups of $G$. Let $\fV_G^\fH\subseteq \K_0(\Var^G_S)$ be the subset of elements $[X]$ so that $[XY]_G=[X]_G[Y]_G$. Then, $\fV_G^\fH$ is a $\K_0(\Var_S)$-subalgebra of $\K_0(\Var_S^G)$.\qed
\end{lemma}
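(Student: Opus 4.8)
The plan is to verify the two defining properties of a $\K_0(\Var_S)$-subalgebra directly: that $\fV_G^\fH$ is closed under addition, and that it is closed under multiplication by arbitrary elements of $\K_0(\Var^G_S)$ of the special form $[Y]$ with $Y\in\fV_G^\fH$, together with closure under scaling by $\K_0(\Var_S)$ (i.e. by classes carrying the trivial $G$-action). Throughout, the key input is that the motivic decomposition $[-]_G$ is, by the previous lemma, a \emph{well-defined map of vector spaces} on $\K_0(\Var^G_S)\otimes_\ZZ\QQ$, so it is additive; the subtlety is purely multiplicative.

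First I would set up the bilinear form $B(x,y) := [xy]_G - [x]_G[y]_G$ on $\K_0(\Var^G_S)\otimes_\ZZ\QQ$, valued in $\K_0(\Var_S)\otimes_\ZZ R_\QQ(G)\otimes_\ZZ\QQ$. Since $[-]_G$ is additive and multiplication in $\K_0(\Var^G_S)$ (the diagonal action) and in the target ring are both biadditive, $B$ is $\QQ$-bilinear. Then $\fV_G^\fH$ is exactly $\{x : B(x,y)=0 \text{ for all } y\}$ restricted to effective classes, but it is cleaner to work with the linear span. Closure under addition is then immediate from bilinearity: if $B(x_1,-)=B(x_2,-)=0$ then $B(x_1+x_2,-)=0$. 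For the $\K_0(\Var_S)$-module structure, I would check that a class $[T]$ with trivial $G$-action lies in $\fV_G^\fH$ and, more, is "central" for $B$: for any $H\in\fH$, $[(T\times X)/H] = [T]\cdot[X/H]$ because $H$ acts trivially on $T$, and $\langle T_H, \Res_H^G([T]\otimes V)\rangle = [T]\cdot\langle T_H,\Res_H^G V\rangle$; comparing against the defining equations of the decomposition (which has a unique solution since $\fH$ is good) shows $[T]_G = [T]\otimes[1]$, and then $B([T]x, y) = [T]\,B(x,y)$, so $[T]x\in\fV_G^\fH$ whenever $x\in\fV_G^\fH$.

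The real content is closure under products \emph{within} $\fV_G^\fH$: if $[X],[Y]\in\fV_G^\fH$ then $[XY]\in\fV_G^\fH$, i.e. $[XYZ]_G = [XY]_G[Z]_G$ for all $[Z]$. Here I would use $[XY]_G = [X]_G[Y]_G$ (since $[X]\in\fV_G^\fH$ applied to $Y$) to rewrite the left side as $[X]_G[Y]_G[Z]_G$ after invoking $[X]\in\fV_G^\fH$ applied to the product $YZ$ — that is, $[X(YZ)]_G = [X]_G[YZ]_G$ — and then $[YZ]_G = [Y]_G[Z]_G$ since $[Y]\in\fV_G^\fH$. Associativity and commutativity of the product in both $\K_0(\Var^G_S)$ and the target ring then give $[XYZ]_G = [X]_G[Y]_G[Z]_G = [XY]_G[Z]_G$, as desired. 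The one place to be careful is that these identities are only known for \emph{classes} of varieties in $\fV_G^\fH$, not a priori for general $\QQ$-linear combinations; so I would either define $\fV_G^\fH$ as the $\K_0(\Var_S)$-span of such effective classes from the start (and note the span is automatically multiplicatively closed by the bilinearity argument above), or carry the effective-class hypotheses through each step. I expect the main obstacle to be purely bookkeeping — ensuring that every application of "$[A]\in\fV_G^\fH$" is to a product of varieties that is again (the class of) a variety, so that the defining property genuinely applies — rather than anything conceptually deep; there is no new use of the goodness of $\fH$ beyond what guarantees $[-]_G$ exists and is linear.
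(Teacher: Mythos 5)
Your proof is correct and follows the route one would expect: the paper itself gives no argument for this lemma (it only cites Lemma 3.6.18 of the author's thesis and marks the statement as already proved), and the intended proof is precisely your combination of linearity of $[-]_G$, the uniqueness of the decomposition coming from goodness of $\fH$ (which yields $[T\times X]_G=[T]\,[X]_G$ when $G$ acts trivially on $T$, hence the $\K_0(\Var_S)$-module structure), and the associativity trick for closure under products. Your closing worry about effective classes is harmless: for $\fV_G^\fH$ to be a subalgebra it must be read as the set of all elements $x\in\K_0(\Var_S^G)$ satisfying the multiplicativity condition against all classes $y$, and then bilinearity of $(x,y)\mapsto [xy]_G-[x]_G[y]_G$ (using that $[-]_G$ is a linear map on $\K_0(\Var_S^G)\otimes_\ZZ\QQ$) makes every step of your argument legitimate without extra bookkeeping.
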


\begin{proof}
    Since $[Y]_G$ is $\K_0(\Var_S)$-linear, therefore $\fV_G^\fH$ is a $\K_0(\Var_S)$-linear subgroup of $\K_0(\Var_S^G)$. To see that $\fV_G^\fH$ is closed under multiplication, consider $[X], [Y]\in \fV_G^\fH$ and $[Z]\in \K_0(\Var_S^G)$, and take
    \[[(XY)Z]_G=[X(YZ)]_G=[X]_G[YZ]_G=[X]_G[Y]_G[Z]_G=[XY]_G[Z]_G.\]
\end{proof}

This lemma enables us to provide a large set of varieties for which the motivic K\"unneth-formula holds.

\begin{theorem}\label{thm:motkunneth}[Theorem 3.6.19. of \cite{vogel2024motivicphd}]
    Let $G$ be a finite algebraic group over a field $k$ so that $k$ contains all $|G|$-th roots of unity, $\fH$ a good set of conjugacy classes of subgroups of $G$. Then,
    \begin{itemize}
        \item If $G$ acts linearly on $\AA^1$, then $\fV_G^\fH$ contains the class of $\AA^1$.
        \item If $G$ acts diagonally on $\AA^n$, then $\fV_G^\fH$ contains the class of $\AA^n$.
        \item If $G$ acts diagonally on $\PP^n$, then $\fV_G^\fH$ contains the class of $\PP^n$.
        \item The discreet group scheme $G$ with the natural translation action by $G$ lies in $\fV_G^\fH$. 
    \end{itemize}
\end{theorem}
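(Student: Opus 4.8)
The plan is to use the preceding lemma, which says that $\fV:=\fV_G^\fH$ is a $\K_0(\Var_S)$-subalgebra of $\K_0(\Var^G_S)$, hence closed under sums and products, and to thereby reduce all four assertions to two generating cases. If $G$ acts diagonally on $\AA^n$, then $\AA^n\cong\AA^1_{\chi_1}\times_S\cdots\times_S\AA^1_{\chi_n}$ as $G$-varieties for characters $\chi_i$ of $G$, so $[\AA^n]=\prod_i[\AA^1_{\chi_i}]$ lies in $\fV$ as soon as each factor does; and if $G$ acts diagonally on $\PP^n=\PP(\chi_0\oplus\cdots\oplus\chi_n)$, the coordinate flag is $G$-invariant, producing a $G$-equivariant stratification $\PP^n=\bigsqcup_{j=0}^nC_j$ with each $C_j\cong\AA^j$ carrying a diagonal action, so $[\PP^n]=\sum_j[C_j]$ lies in $\fV$ once the affine case is known. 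Thus it suffices to prove (a) that $\AA^1$ with a character action $\AA^1_\chi$ lies in $\fV$, and (b) that $G$ with its left-translation action lies in $\fV$.

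For (a) I would first pin down the decomposition of $\AA^1_\chi$. For every subgroup $H\le G$ the invariant ring $k[t]^H=k[t^{|\chi(H)|}]$ is again a polynomial ring, so $\AA^1_\chi/H\cong\AA^1$ and $[\AA^1_\chi/H]=\LL$; by uniqueness of the motivic decomposition this forces $[\AA^1_\chi]_G=\LL\otimes[\mathbf 1]$, whence $[\AA^1_\chi]_G[Y]_G=\LL\cdot[Y]_G$ for every $G$-variety $Y$. The claim therefore becomes the equality $[(\AA^1_\chi\times_SY)/H]=\LL\cdot[Y/H]$ for every $[H]\in\fH$ and every $G$-variety $Y$, with the product carrying the diagonal $H$-action. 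To prove this I would stratify $Y$ $H$-equivariantly by orbit type: on the stratum $Y_{(K)}$ of points whose $H$-stabilizer is conjugate in $H$ to $K$ one has $Y_{(K)}\cong H\times_N Y^{\circ}$ with $N=N_H(K)$ and $Y^{\circ}$ the exact-stabilizer locus (on which $K$ acts trivially), and an untwisting of the diagonal action identifies $(\AA^1_\chi\times Y_{(K)})/H$ with $(\AA^1_\chi\times Y^{\circ})/N$ and $Y_{(K)}/H$ with $Y^{\circ}/N$. Quotienting first by $K$ replaces $\AA^1_\chi$ by $\AA^1_\chi/K\cong\AA^1$, on which the quotient group $N/K$ acts through a genuine character and on $Y^{\circ}$ freely; in this free situation the trivial line bundle $\AA^1\times Y^{\circ}\to Y^{\circ}$, with its twisted $(N/K)$-equivariant structure, descends along the torsor $Y^{\circ}\to Y^{\circ}/(N/K)$ to an honest line bundle whose total space is $(\AA^1\times Y^{\circ})/(N/K)$, so this quotient has class $\LL\cdot[Y^{\circ}/(N/K)]$. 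Summing over the strata gives $[(\AA^1_\chi\times Y)/H]=\LL\cdot[Y/H]$.

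For (b), the translation action restricted to any $H\le G$ has quotient the coset space $H\backslash G$, so $[G/H]=[G:H]=\dim_\QQ(\QQ[G])^H$; uniqueness again yields $[G]_G=1\otimes[\QQ[G]]$ with $\QQ[G]$ the rational regular representation. Using $\QQ[G]\otimes V\cong\QQ[G]^{\oplus\dim V}$, if $[Y]_G=\sum_V[U_V]\otimes[V]$ then $[G]_G[Y]_G=\bigl(\sum_V(\dim V)[U_V]\bigr)\otimes[\QQ[G]]$, so $\langle T_H,\Res^G_H([G]_G[Y]_G)\rangle=[G:H]\sum_V(\dim V)[U_V]$; meanwhile untwisting shows $(G\times_SY)/H\cong(H\backslash G)\times Y$, so $[(G\times_SY)/H]=[G:H]\cdot[Y]$, where $[Y]$ is the class of the underlying variety. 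Comparing, one is reduced to the identity $[Y]=\sum_{[H]\in\fH}a_H[Y/H]$, where the $a_H\in\QQ$ are the coefficients (unique, since $\fH$ is good) expressing $\dim=\sum_{[H]\in\fH}a_H\dim(-)^H$ on $R_\QQ(G)$. When $[\{1\}]\in\fH$ — which holds for every good set actually used in the paper (cyclic groups and their products, the $S_\lambda\le S_n$, etc.) — one has $a_{\{1\}}=1$ and the other $a_H$ vanish, so the identity is trivial; in general I would deduce it, using that $[(-)/H]$ is $\K_0(\Var_S)$-linear, from the case of finite $G$-sets $Y=G/K$, where it reads $[G:K]=\dim\Ind_K^G\mathbf 1=\sum_{[H]\in\fH}a_H\dim(\Ind_K^G\mathbf 1)^H=\sum_{[H]\in\fH}a_H|H\backslash G/K|$, i.e.\ the defining relation of the $a_H$ at $V=\Ind_K^G\mathbf 1$.

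The hard part will be (a), and specifically the non-free directions: the obvious attempt — descending the $H$-equivariant line bundle $\AA^1_\chi\times Y\to Y$ directly to $Y/H$ — fails because a point's stabilizer may act non-trivially on its fibre, and the remedy (orbit stratification, then replacing $\AA^1_\chi$ by $\AA^1_\chi/K\cong\AA^1$ so that the residual group acts freely) is the technical core of the argument; the only further input is the Zariski-local triviality of the descended line bundle in the free case. In (b) the sole subtlety, which appears only if $[\{1\}]\notin\fH$, is promoting the $G$-set identity above to arbitrary free $G$-varieties by a dévissage along orbit types — a motivic avatar of Artin's relations among permutation characters — but this is not needed for any of the applications in the paper.
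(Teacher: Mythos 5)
Your handling of the first three bullets is correct and, as far as the reductions go, identical to the paper's: the paper likewise deduces the $\AA^n$ and $\PP^n$ cases from the $\AA^1$ case via the preceding subalgebra lemma (your invariant cell decomposition of $\PP^n$ is the natural way to spell this out), and likewise reduces the $\AA^1$ case, using $[\AA^1/H]=\LL$ and uniqueness of the decomposition, to the identity $[(\AA^1\times Y)/H]=\LL\,[Y/H]$ for all $[H]\in\fH$ and all $G$-varieties $Y$. Where you genuinely diverge is in the proof of this identity. The paper passes to the kernel $N$ of the character $H\to\GL_1$, uses $(\AA^1\times Y)/N=\AA^1\times(Y/N)$ to induct down to the case of a cyclic $H$ acting faithfully, and then observes that $(\GG_m\times Y)/H\to Y/H$ is a $\GG_m$-torsor, hence Zariski-locally trivial, concluding via $[(\AA^1\times Y)/H]=[(\GG_m\times Y)/H]+[Y/H]$. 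You instead stratify $Y$ by $H$-stabilizer type, untwist $Y_{(K)}\cong H\times_{N_H(K)}Y^{\circ}$, replace $\AA^1_\chi$ by $\AA^1_\chi/K\cong\AA^1$, and descend the equivariant line bundle along the free quotient $Y^{\circ}\to Y^{\circ}/(N_H(K)/K)$. Both arguments are sound and rest on the same kind of input (local triviality of $\GG_m$-torsors, respectively of line bundles); the paper's induction is shorter, yours makes the role of freeness explicit.

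For the fourth bullet the paper's printed proof says nothing (it is quoted from the thesis), so here you go beyond it. Your reduction is correct: $[G]_G=1\otimes[\QQ[G]]$, $\QQ[G]\otimes V\cong\QQ[G]^{\oplus\dim V}$, and the untwisting $(G\times Y)/H\cong(H\backslash G)\times Y$ show that $[G]\in\fV_G^\fH$ is equivalent to $[Y]=\sum_{[H]\in\fH}a_H[Y/H]$ for all $G$-varieties $Y$, where $\dim=\sum_{[H]\in\fH}a_H\dim(-)^H$; and since every good set actually constructed in the paper contains the trivial subgroup, your argument completely settles the fourth bullet in all cases the paper uses. However, the dévissage you sketch for a general good $\fH$ cannot be completed: linear relations among the functionals $\dim(-)^H$ do not lift to relations among the classes $[Y/H]$ -- this is exactly the ``over-constraint'' phenomenon the paper alludes to (Example 3.6.9 of the thesis). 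Concretely, for $G=S_3$ the set $\fH=\{C_2,C_3,S_3\}$ is good, with $a_{C_2}=2$, $a_{C_3}=1$, $a_{S_3}=-2$; taking $Y\to B$ a connected étale $S_3$-Galois cover of a genus-$2$ curve, the curves $Y$, $Y/C_2$, $Y/C_3$, $B$ have genera $7,4,3,2$, so reducing modulo $\LL$ and invoking Larsen--Lunts their stable birational classes are pairwise distinct and $[Y]\neq 2[Y/C_2]+[Y/C_3]-2[Y/S_3]$ in $\K_0(\Var_\CC)$. So the free case is not a formal consequence of the finite $G$-set case, and in fact your own reduction shows that for such an $\fH$ the fourth bullet itself fails for this $Y$; the statement should be read with a hypothesis such as $[\{1\}]\in\fH$ (or whatever restriction the thesis imposes), which is precisely the situation your proof covers.
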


\begin{proof}
    We provide an alternative short proof for the first statement, then the second and third statement follows from the first one from the lemma before.
    
    Since, $[\AA^1/H]=[\AA^1]$ for any linear action of a group on $[\AA^1]$, we have that the motivic decomposition of $[\AA^1]$ is just $[\AA^1]\otimes T_G$. Thus, to show the first statement, we need to show that $[\AA^1\times Y/H]=[\AA^1][Y/H]$ for all $Y\in \K_0(\Var_S^G)$ and all $[H]\in fH$.  

    Consider, $N\leq H$, the kernel of the representation $H\to \GL^1(k)$ corresponding to the linear action of $H$ on $\AA^1$. Then, 
    \[[\AA^1\times Y/H]=[(\AA^1\times Y/N)/(H/N)]=[\AA^1][(Y/N)/(H/N)]=[\AA^1][Y/H]\]
    showing that if the statement is true for the smaller group $H/N$, then it is also true for $H$. Therefore, using induction, we can assume that $H$ is a cyclic group acing freely on $\GG_m=\AA^1\setminus \{0\}$. In this case, $[\GG_m\times Y/H]$ is a $\GG_m$-torsor over $[Y/H]$, so
    \[[\GG_m\times Y/H]=[\GG_m][Y/H]\]
    impyling the proof as $[\AA^1\times Y/H]=[\GG_m\times Y/H]+[Y/H]$.
\end{proof}

We conclude this section by explicitly describing the Motivic K\"unneth-formula in the cases of $S_2$ and $S_3$.

\begin{example}
    Consider $S_2$ and the motivic decomposition as in Example \ref{ex:mots2decomp}, i.e for a class $[X]\in \K_0(\Var^{S_2}_S)$, we have a decomposition
    \[[X]_{S_2}=[X]^+T+[X]^-\Sigma.\]
    Now, let $[Y]\in \fV^{\fH}_{S_2}$. Then,
    \[[X\times Y]_{S_2}=[X]_{S_2}[Y]_{S_2}.\]
    In other words, 
    \[[X\times Y]^+=[X]^+[Y]^++[X]^-[Y]^-\]
    and
    \[[X\times Y]^-=[X]^+[Y]^-+[X]^-[Y]^+.\]
\end{example}

\begin{remark}\label{rem:counterkunneth}
    This example highlights why the Motivic K\"unneth-formula cannot hold in general. Indeed, consider an elliptic curve $X=Y=E$ with the negation action. Then, the surface $E\times E/S_2$ is a singular surface, whose resolution is the Kummer K3 surface. By \cite{larsen2003motivic}, there exists a motivic measure 
    \[\K_0(\Var_k)\to \ZZ[SB_k]\]
    from the Grothendieck ring of varieties to the free Abelian group of stable birational classes of smooth and projective varieties that induces an isomorphism $\K_0(\Var_k)/[\AA^1]\cong \ZZ[SB_k]$. Under this isomorphism, the class of a K3 surface is not a linear combination of the classes of $[\PP^1\times \PP^1]$, $[\PP^1\times E]$, $[E\times \PP^1]$ and $[E\times E]$, and thus, the virtual class $[E\times E/S_2]\in \K_0(\Var_k)$ is also linearly independent from the virtual classes of $[\PP^1\times \PP^1]$, $[\PP^1\times E]$, $[E\times \PP^1]$ and $[E\times E]$.
\end{remark}

We note that a similar counterexample appears in the PhD thesis of the second author, namely Example 3.6.17. of \cite{vogel2024motivicphd}.

\begin{example}
    Consider $S_3$ and the motivic decomposition as in Example \ref{ex:mots3decomp}, i.e, we have that for a class $[X]\in \K_0(\Var^{S_3}_S)$ a decomposition
    \[[X]_{S_3}=aT+b\Sigma+cR.\]
    Given $[Y]\in \fV^{\fH}_{S_3}$ with $[Y]=a'T+b'\Sigma+c'R$, we have
    \[[X\times Y]_{S_3}=[X]_{S_3}[Y]_{S_3}.\]
    In other words,
    \[[X\times Y]_{S_3}=(aa'+bb'+cc')T+(ab'+a'b+cc')\Sigma+(ac'+bc'+a'c+b'c+cc')R.\]
\end{example}

\subsection{Compatibility with mixed Hodge structures}
In this section, we show that Theorem \ref{thm:motivicdecom} is compatible with mixed Hodge structures. We follow \cite{cappell2008hodge} for basic definitions.

\begin{definition}
    We define the Grothendieck ring of mixed Hodge structures, $\K_0(\MHS)$, as the Grothendieck group of the Abelian category of (graded polarizable) rational mixed Hodge structures. The ring structure is given by tensor product, and the unit is given by the 1-dimensional pure Hodge structure of weight 0.
\end{definition}

The $E$-polynomial motivic measure
\[\K_0(\Var_\CC)\to \ZZ[u,v]\]
factors through the Grothendieck ring of mixed Hodge structures
\[\K_0(\Var_\CC)\xrightarrow{D} \K_0(\MHS)\to \ZZ[u,v]\]
where the first map is defined on a variety $X$
\[D:[X]\mapsto \sum_i (-1)^i [H^i_c(X, \QQ)]\]
as the alternating sum of the mixed Hodge structures (defined by Deligne \cite{PMIHES_1971__40__5_0, PMIHES_1974__44__5_0}) on the compactly supported cohomology of $X$; and the second map is defined as the E-polynomial of a mixed Hodge structure
\[(V, F^\bullet ,W_{\bullet})\mapsto \sum_{p,q} \dim_{\CC} gr^p_Fgr_{p+q}^W (V \otimes \CC)u^pv^q.\]

Given a variety $X$ with an action of a finite group $G$, the mixed Hodge structure of $X$ is also equipped with a $G$-action that can be extended to a map 
\[D_G:\K_0(\Var_{\CC}^G)\to \K_0(\MHS^G)\]
from the Grothendieck ring of varieties with a $G$ action to the Grothendieck ring of the category of mixed Hodge structures with a $G$-action, $\MHS^G$. We note that $\MHS^G$ is an Abelian category, and thus, any object $H\in \MHS^G$ admits a canonical decomposition
\[H=\bigoplus_{\rho\in Irr_{\QQ}(G)}H_\rho\otimes \rho\]
with respect to the irreducible rational representation of $G$.  Therefore, we obtain a homomorphism of Abelian groups
\[\Phi:\K_0(\MHS^G)\to \K_0(\MHS)\otimes_{\ZZ}R_\QQ(G),\]

The discussion above yields the following theorem.

\begin{theorem}\label{thm:VarvsMHS}
    Let $\fH$ be a very good set of conjugacy classes of subgroups of a finite group $G$. The motivic decomposition map $-_{G}:\K_0(\Var_\CC^G)\to \K_0(\Var_\CC)\otimes_{\ZZ} R_\QQ(G)$ corresponding to $\fH$ fits into a commutative diagram
    \[
    \begin{tikzcd}
        \K_0(\Var_\CC^G)\ar[r, "-_G"]\ar[d, "D_G"] & \K_0(\Var_\CC)\otimes_{\ZZ} R_\QQ(G)\ar[d, "D\otimes \id"]\\
        \K_0(\MHS^G)\ar[r, "\Phi"]& \K_0(\MHS)\otimes_{\ZZ}R_\QQ(G).
    \end{tikzcd} \]
\end{theorem}

\begin{proof}
    We need to show that $(D\otimes \id)\circ -_G=\Phi\circ D_G$. Since $\fH$ is a very good set of conjugacy classes of subgroups, it is enough to show that
     \[\langle T_H, \Res_H^G((D\otimes \id)[X]_G)\rangle=\langle T_H, \Phi(D_G(X))\rangle \]
    holds for any $H\in \fH$ and any variety $X$ with a $G$-action. The left-hand side can be identified with
    \[\langle T_H, \Res_H^G((D\otimes \id)[X]_G)\rangle=D([X/H])\]
    while the right-hand side can be identified with the $H$-invariant part of $D_G(X)$. These two agree as the $H$-equivariant part of the mixed Hodge structure is given by the mixed Hodge structure on $X/H$ (see \cite{florentino2025mixed}):
\[H^{k,p,q}(X/H, \CC)=H^{k,p,q}(X)^H\]
proving our statement.
\end{proof}

We now connect motivic representation stability with representation stability of the $E$-polynomial. We start by defining the stability of $E$-polynomials.

\begin{definition}\label{def:Epolystable}
    Let $\{X_n\}_n$ be a sequence of varieties. We say that the $E$-polynomial of $\{X_n\}_n$ is stable if the limit
    \[\lim_{n\to \infty}\frac{E(X_n)}{(uv)^{\dim X_n}}\]
    exists in the completed localized ring $\widehat{\ZZ[u,v,\frac{1}{uv}]}$ (defined in Remark \ref{rem:epolymotmeasure}).
\end{definition}

\begin{remark}
    Roughly, in simple terms, the stability of the E-polynomial tells us that the top parts of the Hodge diamonds of the sequence of varieties $\{X_n\}_n$ converge. 
\end{remark}

Now, we define the representation stability of $E$-polynomials.

\begin{definition}
    Let $\{X_n\}_n$ be a sequence of varieties with an action of $\{S_n\}_n$. We say that the $E$-polynomial of $\{X_n\}_n$ is representation stable if the $E$-polynomials of the sequences of varieties $[X_n/S_{\lambda[n]}]$ are stable for all partitions $\lambda$.
\end{definition}

\begin{remark}
    An easy adjustment of Theorem \ref{thm:motrepstabequivalent} tells us that given a sequence of varieties $\{X_n\}_n$ with an action of the symmetric groups $\{S_n\}_n$, we have that the $E$-polynomial of $\{X_n\}_n$ is representation stable, if and only if, for any partition $\lambda$ the $E$-polynomial of the coefficient of $[X_n]_{S_n}$ of $V_{\lambda[n]}$ is stable.
\end{remark}

As an easy consequence of Theorem \ref{thm:VarvsMHS} we get the following.

\begin{corollary}
    Let $\{X_n\}_n$ be a sequence of varieties with an action of the symmetric groups $S_n$ so that $\{X_n\}_n$ is motivically representation stable. Then, the $E$-polynomial of $\{X_n\}_n$ is representation stable.\qed
\end{corollary}

Again, in simple terms, for any partition $\lambda$, the top parts of the Hodge diamonds of $\{X_n/S_{\lambda[n]}\}$ converge. 

\section{Motivic Stability of Representation Varieties of Surface groups}\label{sec:surface}

Let $M_g$ be a compact genus $g$ smooth surface, and $G$ a linear algebraic group. In this section, we are interested whether the varieties $\Rep_G(M_g):=\Hom_{Gp}(\pi_1(M_g), G)$ are motivically stable in terms of $g$. A priori, $\Hom_{Gp}(\pi_1(M_g), G)$ is only endowed with a set structure, however, 
\[\Hom_{Gp}(\pi_1(M_g), G)=\{A_1,B_1,...,A_g,B_g\in G^{2g}|\prod_{i=1}^g [A_i,B_i]=\id\}\]
and thus $\Rep_G(M_g)$ can, indeed, be realized as a closed subvariety of $G^{2g}$.

Explicitly, we are interested whether the limit
\[\lim_{g\to \infty}\frac{[\Rep_G(M_g)]}{[G^{2g}]}\]
exists in the completed Grothendieck ring of stacks, $\widehat{\K_0(\Stck_k)}$. Informally, this limit measures the probability that the $2g$ elements $A_1, B_1, ..., A_g, B_g$ of the group $G$ satisfy the property
\[\prod_{i=1}^g [A_i,B_i]=\id.\]

\subsection{Finite field heuristics}

We expect that as the genus gets larger and larger the product of commutators $\prod_{i=1}^g [A_i,B_i]$ spreads evenly across the commutator subgroup $[G,G]$. In fact, this happens in the case of finite groups $G$. In this case, using a classical result of Frobenius \cite{frobenius1896gruppencharaktere}, $\Rep_G(M_g)$ is a finite set of cardinality
\begin{equation}\label{eq:frobenius}
    \#\Rep_G(M_g)=\#G\sum_{\chi\in \Irr(G)} \left(\frac{\#G}{\chi(1)}\right)^{2g-2}
\end{equation}
that depend only on the dimensions $\chi(1)$ of irreducible (complex) characters of $G$. Thus, in the case of finite groups, Limit \ref{eq:limit} exists, namely
\[\lim_{g\to \infty}\frac{\#\Rep_G(M_g)}{\#G^{2g}}=\lim_{g\to \infty}\frac{1}{\#G}\sum_{\chi}\frac{1}{\chi(1)^{2g-2}}=\frac{1}{[G,G]}.\]

The above computation suggests that such limit should exist motivically in the case of connected linear algebraic groups over $\CC$.

\subsection{The case of reductive groups with connected centers}

In this section, we show that Limit \ref{eq:limit} exists in the context of the motivic measure given by the E-polynomial (see Remark \ref{rem:epolymotmeasure}) for reductive groups $G$ with a connected center. In other words, we show that E-polynomial of $\Rep_G(M_g)$ is stable (see Definition \ref{def:Epolystable}).

\begin{proposition}\label{prop:limitreductive}
    Let $G$ be a reductive group over $\CC$ with a connected center. Then, 
    \[\lim_{g\to \infty} \frac{E(\Rep_G(M_g))}{E(G)^{2g}}=\frac{E(G/[G,G])}{E(G)}.\]
\end{proposition}

\begin{proof}
    The $G$-representation varieties of surfaces, $\Rep_G(M_g)$, are PORC count (polynomial on residue classes) \cite{BRIDGER_KAMGARPOUR_2023} meaning that $\# \Rep_{G(\FF_q)}(M_g)$ is a polynomial, $q_{G,g}(t)\in \QQ[t]$, in terms of $q$ assuming that $q\equiv 1$ module $d(G^\vee)$, the modulus of the Langlands dual group. Using a result of Katz (see Appendix of \cite{hausel2008mixed}), this implies that the E-polynomial of the representation varieties, $\Rep_G(M_g)$, is a polynomial of $uv$, explicitly, the E-polynomial is given as $q_{G,g}(uv)$. 

We analyze the point count, $q_(G,g)(t)$, now in terms of the dimensions of the representations. The summand of the right-hand side of Equation \ref{eq:frobenius} corresponding to 1-dimensional representations is given as
\[\#G\cdot \#(G/[G,G])\cdot (\#G)^{2g-2}.\]
This is also PORC count (we denote it by $h_{G,g}(t)$) providing a contribution of $E(G/[G,G])\cdot E(G)^{2g-1}$ in the E-polynomial, $E(\Rep_G(M_g))$. The rest of the summands is also PORC count since the rest of the summands corresponds to the polynomial $q_{G,g}(t)-h_{G,g}(t)$. Using Remark 5.2 of \cite{BRIDGER_KAMGARPOUR_2023}, the rest of the summands (that are of the form $\left(\frac{G}{\chi(1)}\right)$) are of smaller dimensions than $G$. Therefore, their contribution is negligible in the limit (since we raise these pieces to the $2g$-th power). Thus, this holds for the E-polynomial as well implying our statement.
\end{proof}

\subsection{General linear algebraic groups}
We conjecture, based on the finite field heuristics and Proposition \ref{prop:limitreductive}, that the limit \ref{eq:limit} exists.

\begin{conjecture}\label{con:stabsurface}
    Let $G$ be a connected linear algebraic group over $k$. Let $\Rep_G(M_g)$ denote the $G$-representation variety of the surface group of a smooth compact genus $g$ surface. Then, 
    \[\lim_{g\to \infty}\frac{[\Rep_G(M_g)]}{[G^{2g}]}=\frac{[G/[G,G]]}{[G]}\]
    in the completed Grothendieck rings of stacks, $\widehat{\K_0(\Stck_k)}$.
\end{conjecture}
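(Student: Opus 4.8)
The plan is to reduce the computation of $[\Rep_G(M_g)]$ to a spectral problem via a Topological Quantum Field Theory (TQFT) and then to identify the single part of the spectrum that survives division by $[G^{2g}]$ in the limit; this is the motivic incarnation of the finite-field heuristic recorded above. \textbf{Step 1: a TQFT and a motivic Frobenius formula.} Following the arithmetic--geometric TQFT formalism of \cite{habvog20, gonzalez2023arithmetic}, I would work with a lax monoidal functor $Z$ from a category of $2$-cobordisms to (a $\K_0(\Var_k)$-module mapping to) $\widehat{\K_0(\Stck_k)}$ such that $Z$ computes classes of $G$-representation varieties. Since a closed genus-$g$ surface is glued from two disks and $g$ copies of the genus-one cobordism \bdgamma, one gets
\[
[\Rep_G(M_g)] \;=\; \varepsilon\circ Z(\gamma)^{\circ g}\circ \eta,
\]
where $\eta$ and $\varepsilon$ are the classes of the disk viewed as cobordisms $\varnothing\to S^1$ and $S^1\to\varnothing$, and $Z(\gamma)$ is the genus operator on $Z(S^1)$. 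Over $\FF_q$ this is exactly the mechanism behind \eqref{eq:frobenius}: there $Z(S^1)$ is the space of class functions on $G(\FF_q)$, $Z(\gamma)$ is diagonalized by the irreducible characters $\chi$ with eigenvalue $(\#G(\FF_q)/\chi(1))^2$, and $\varepsilon\circ Z(\gamma)^{\circ g}\circ\eta = \#G(\FF_q)\sum_\chi (\#G(\FF_q)/\chi(1))^{2g-2}$.

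\textbf{Step 2: isolating the dominant block.} The crux is that $Z(\gamma)$ has a distinguished eigenvalue $[G]^2$, carried by the ``one-dimensional'' part of the spectrum — the block indexed by the characters of $G/[G,G]$ — whose idempotent contributes precisely $[G/[G,G]]$, while every other block has eigenvalue a class of strictly smaller dimension than $[G]^2$, with a dimension deficit bounded below uniformly. For the groups in which the conjecture is verified, namely $G=\GL_r$, $G=\SL_r$, and the groups of upper-triangular matrices, this genus operator has been computed explicitly \cite{habvog20, thesisangel, vogel2024motivic}, and one reads off a decomposition
\[
[\Rep_G(M_g)] \;=\; [G]^{2g-1}\,[G/[G,G]] \;+\; R_g,
\]
in which $R_g$ is polynomial in $\LL$ and in the classes $\LL^i-1$ and $\dim R_g - 2g\dim G \to -\infty$ (indeed linearly in $g$). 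In particular $[G^{2g}]=[G]^{2g}$, and this identity holds already before any limit is taken.

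\textbf{Step 3: passing to the limit.} Dividing through by $[G^{2g}]=[G]^{2g}$ yields $[G/[G,G]]/[G] + R_g/[G]^{2g}$; since the effective dimension of $R_g/[G]^{2g}$ equals $\dim R_g - 2g\dim G\to -\infty$, this term lies in arbitrarily deep steps of the filtration defining $\widehat{\K_0(\Stck_k)}$ and hence converges to $0$, proving \eqref{eq:limit} for these $G$. For $G$ reductive with connected center, Corollary~\ref{cor:limitreductive} already gives the identity after applying the $E$-polynomial motivic measure; since $[\Rep_G(M_g)]$ is polynomial in $\LL$ for these $G$ (so that the $E$-polynomial detects the class unambiguously), the statement is promoted from $\widehat{\ZZ[u,v,\frac{1}{uv}]}$ back to $\widehat{\K_0(\Stck_k)}$ itself.

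\textbf{The main obstacle.} For a general connected linear algebraic group $G$ the argument breaks down in Steps 1--2. The diagonalization of the genus operator rests on the representation theory of $G(\FF_q)$ and on a motivic lift of it, which is understood only for $\GL_r$, $\SL_r$ and their parabolic/triangular relatives; outside these cases there is neither a workable motivic Frobenius formula nor explicit control of the subdominant blocks. Moreover, even granting a spectral decomposition, one must control the non-dominant contributions \emph{as classes in} $\K_0(\Stck_k)$ — their $E$-polynomials alone do not suffice — and establish a uniform-in-$g$ linear dimension deficit; for non-reductive $G$, or reductive $G$ with disconnected center, this demands a genuinely motivic (not merely point-counting) handle on the ``higher characters'' of $G$, which is precisely why the statement is left as a conjecture.
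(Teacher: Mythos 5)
The statement you are addressing is a conjecture, and the paper does not prove it either: its support consists of (i) the finite-field Frobenius heuristic \eqref{eq:frobenius}, (ii) Corollary~\ref{cor:limitreductive}, which establishes the limit only after applying the $E$-polynomial motivic measure, for reductive $G$ with connected center, using the PORC-count result and Katz's theorem, and (iii) explicit verifications of the full motivic statement from known TQFT computations of $[\Rep_G(M_g)]$, which exist only for $\SL_2(k)$ and the upper-triangular groups of rank at most $5$, plus counterexamples for non-connected $G$. Your Steps 1--2 and your closing discussion of the obstacle reproduce this picture faithfully, and your Step 3 mechanism (the subleading part has dimension deficit growing linearly in $g$, hence dies in the dimension filtration of $\widehat{\K_0(\Stck_k)}$) is exactly how the explicit verifications work, e.g.\ for $\SL_2(k)$ where the remainder carries coefficients like $2^{2g}$ but has dimension roughly $4g$ against $6g-3$.

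There are, however, two concrete overclaims. First, your promotion at the end of Step 3 --- ``since $[\Rep_G(M_g)]$ is polynomial in $\LL$ for these $G$, the statement is promoted from $\widehat{\ZZ[u,v,\frac{1}{uv}]}$ back to $\widehat{\K_0(\Stck_k)}$'' --- is not justified and is not made by the paper. The PORC-count property gives, via Katz, that the $E$-polynomial equals the counting polynomial evaluated at $uv$; it does \emph{not} imply that the class $[\Rep_G(M_g)]$ lies in $\ZZ[\LL]$ inside $\K_0(\Var_k)$, and the $E$-polynomial measure on the completed ring is far from injective, so convergence of $E$-polynomials does not yield convergence of virtual classes. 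This is precisely why the reductive connected-center case remains at the level of Corollary~\ref{cor:limitreductive} and the general statement remains Conjecture~\ref{con:stabsurface}. Second, in Step 2 you list $\GL_r$ and $\SL_r$ (all $r$) among the groups for which the genus operator, and hence the decomposition $[\Rep_G(M_g)]=[G]^{2g-1}[G/[G,G]]+R_g$, has been computed; for the surface-group conjecture the available computations are only $\SL_2(k)$ and the unipotent/upper-triangular groups up to rank $5$, so the set of genuinely verified cases is smaller than you assert. With these two corrections your proposal matches the paper's state of knowledge: a strategy and partial evidence, not a proof.
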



We verify the conjecture in two main cases: 1. in the case of $G=\SL_2(k)$ and in the case of upper triangular matrix groups $G$ of rank up to 5. In fact, the virtual classes of $G$-representation varieties of surface groups are notoriously difficult to compute, namely, only the cases above have been solved successfully \cite{thesisangel, habvog20, vogel2024motivic}. In all such cases, the computations were performed using Topological Quantum Field Theories. 

\begin{example}[The case of upper triangular matrices]
 Explicit computations have been done for the group of upper triangular matrices or unipotent matrices of ranks 2, 3, 4 and 5 \cite{habvog20, vogel2024motivic}:
\begin{itemize}
    \item $[\Rep_{\U_2}(M_g)]=q^{2g-1}(q-1)^{2g+1}\left((q-1)^{2g-1}+1\right)=q^{2g-1}(q-1)^{4g}+l.o.t$
    \item $[\Rep_{\U_3}(M_g)]=q^{6g-3}(q-1)^{6g}+l.o.t$
    \item $[\Rep_{\U_4}(M_g)]=q^{12g-6}(q-1)^{8g}+l.o.t$
    \item $[\Rep_{\U_5}(M_g)]=q^{20g-10}(q-1)^{10g}+l.o.t$
\end{itemize}
providing the proof of Conjecture \ref{con:stabsurface} for the low rank upper triangular matrix groups, indeed, we have
\begin{itemize}
    \item $\lim_{g\to \infty}\frac{[\Rep_{\U_2}(M_g)]}{[\U_2]^{2g}}=\lim_{g\to \infty} \frac{q^{2g-1}(q-1)^{4g}+l.o.t}{q^{2g}(q-1)^{4g}}=\frac{1}{q}=\frac{1}{[[\U_2, \U_2]]},$
    \item $\lim_{g\to \infty}\frac{[\Rep_{\U_3}(M_g)]}{[\U_3]^{2g}}=\lim_{g\to \infty} \frac{q^{6g-3}(q-1)^{6g}+l.o.t}{q^{6g}(q-1)^{6g}}=\frac{1}{q^{3}}=\frac{1}{[[\U_3, \U_3]]},$
    \item $\lim_{g\to \infty}\frac{[\Rep_{\U_4}(M_g)]}{[\U_4]^{2g}}=\lim_{g\to \infty} \frac{q^{12g-6}(q-1)^{8g}+l.o.t}{q^{12g}(q-1)^{8g}}=\frac{1}{q^{6}}=\frac{1}{[[\U_4, \U_4]]},$
    \item $\lim_{g\to \infty}\frac{[\Rep_{\U_5}(M_g)]}{[\U_5]^{2g}}=\lim_{g\to \infty} \frac{q^{20g-10}(q-1)^{10g}+l.o.t}{q^{20g}(q-1)^{10g}}=\frac{1}{q^{10}}=\frac{1}{[[\U_5, \U_5]]}.$
\end{itemize}
\end{example}

\begin{example}[The case of $\SL_2(k)$]
     Using the explicit description of eigenvectors and eigenvalues \cite{vogel2020topological} one gets
 \[ [\Rep_{\SL_2(k)}(M_g)]= \frac{1}{2}q^{2g-1}(q + 1)^{2g-1}(q-1)(2^{2g} + q - 1) + \frac{1}{2}q^{2g-1}(q -1)^{2g-1}(q+1)(2^{2g} + q-3) +\]
 \[+(q^{2g-1} +q)(q-1)^{2g-1}(q+1)^{2g-1}\]
 showing  \[\lim_{g\rightarrow \infty}\frac{[\Rep_{\SL_2(k)}(M_g)]}{[\SL_2(k)]^{2g}}=\frac{1}{q(q-1)(q+1)}=\frac{1}{[\SL_2(k),\SL_2(k)]}\]
     in $\widehat{\K_0(\Stck_k)}$.
\end{example}

\subsection{Counterexamples via non-connected groups}

The assumption that the linear algebraic group is connected is crucial in Conjecture \ref{con:stabsurface}. In fact, in the case of non-connected groups, Lang's theorem \cite{lang1956algebraic} fails, and the finite field heuristics fail. To give an example, we consider $G = \GG_m \rtimes \ZZ/2\ZZ$, where the action of $\ZZ/2\ZZ$ on $\GG_m$ is given by $x \mapsto x^{-1}$. In \cite{gonzalez2022virtual}, the authors described the virtual class of the corresponding representation variety as
\[[\Rep_G(M_g)] = (q - 1)^{2g - 1} (q - 3 + 2^{2g + 1}) \]
providing
\[\lim_{g\rightarrow \infty} \frac{[\Rep_G(M_g)]}{[G]^{2g}}=\frac{2}{q-1}\ne \frac{1}{[G,G]}=\frac{1}{q-1}\]
in $\K(\Stck/k)$. 

The problem is even more severe if one considers the corresponding character stack. In fact, in \cite{gonzalez2022virtual}, the class of the character stack is computed as
\[ [\mathfrak{X}_G(M_g)]) = \frac{\left(q - 1\right)^{2 g - 2} \left(2^{2 g + 1} + q - 3\right)}{2} + \frac{\left(q + 1\right)^{2 g - 2} \left(2^{2 g + 1} + q - 1\right)}{2}.\]
Using this computation, it is clear that the limit in Conjecture \ref{con:stabsurface} does not exist.
\section{Motivic representation stability of representation varieties of free groups and free Abelian groups} Goal of the section is Motivic representation stability of representation varieties of free groups and free Abelian groups

\subsection{Free groups}

In this section, we study the motivic representation stability of the representation varieties $G^n:=\Hom_{Gp}(F^n, G)$ corresponding to the free groups $F_n$ on $n$ letters, and of the corresponding character stacks $[G^n/G]$ where the group $G$ acts by conjugation on $G^n$. The symmetric groups, $S_n$, act by permuting the coordinates of $G^n$, and the conjugation action by the group $G$ commutes with the action of $S_n$.  

First, we show that the representation varieties are motivically representation stable which follows from the following easy lemma.

\begin{lemma}\label{lem:motstabformonic}
    Let $X$ be a variety over $k$, so that $[X]$ is a polynomial in $\LL$ in $\K_0(\Var_k)$. Then, the symmetric powers, $\Sym^n(X)$, are motivically stable if and only if $[X]$ is a monic polynomial of $\LL$.
\end{lemma}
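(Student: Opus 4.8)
The plan is to reduce the statement to the generating-function identity for symmetric powers and then inspect leading terms. First I would recall the motivic zeta function of a variety $X$, namely the formal power series $Z(X, t) = \sum_{n \geq 0} [\Sym^n X]\, t^n$, and remember that when $[X]$ is a polynomial $[X] = \sum_{i=0}^{d} a_i \LL^i$ in $\K_0(\Var_k)$, the zeta function is (by the Kapranov/motivic-measure formalism, applied termwise via $Z(\AA^i, t) = 1/(1-\LL^i t)$) the rational function $Z(X,t) = \prod_{i=0}^{d} (1 - \LL^i t)^{-a_i}$. Motivic stability of the sequence $\Sym^n(X)$ in $\widehat{M_\LL}$ means precisely that $[\Sym^n X]/\LL^{nd}$ converges, i.e. that $Z(X, \LL^{-d} s)$ — after the substitution $t = \LL^{-d}s$ — has a limit of its coefficients, equivalently that the "top-degree-in-$\LL$" contributions do not blow up.

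The key computation I would carry out is the following. With $d = \dim X$ and $a_d$ the leading coefficient of $[X]$, write $Z(X, \LL^{-d} s) = (1 - a_d\text{-factor}) \cdots$; concretely, the factor coming from the top term $a_d \LL^d$ is $(1 - \LL^{d - d} s)^{-a_d} = (1-s)^{-a_d}$, while every factor coming from a term $a_i \LL^i$ with $i < d$ is $(1 - \LL^{i-d} s)^{-a_i}$ with $\LL^{i-d}$ lying in positive filtration (i.e. $\LL^{i-d} \to$ has negative $\LL$-degree), so these factors converge to $1$ in the completed ring $\widehat{M_\LL}$. Hence $[\Sym^n X]/\LL^{nd}$ is the coefficient of $s^n$ in $(1-s)^{-a_d}$ up to a correction that tends to $0$; for this coefficient to stabilize as $n \to \infty$ one needs $a_d = 1$, in which case $(1-s)^{-1} = \sum s^n$ and the limit is $1$. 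Conversely if $a_d \neq 1$ — note $a_d$ is a positive integer since $[X]$ is an effective class of a variety, so $a_d \geq 2$ — the binomial coefficients $\binom{n + a_d - 1}{a_d - 1}$ grow polynomially in $n$ and the sequence cannot converge. This establishes both directions.

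The step I expect to be the main obstacle is making the convergence argument for the lower-order factors fully rigorous in $\widehat{M_\LL}$: one must check that the infinite product of the factors $(1 - \LL^{i-d}s)^{-a_i}$ for $i<d$, expanded as a power series in $s$, has each $s^n$-coefficient lying in arbitrarily high filtration as one discards finitely many terms — i.e. that the product genuinely converges coefficientwise — and that multiplying this by the stabilizing series $(1-s)^{-a_d}$ is continuous. This is where one needs the fact that $F_n M_\LL$ is complete and that $\LL^{-1}$ raises filtration; a clean way is to argue that $[\Sym^n X]/\LL^{nd} - \binom{n+a_d-1}{a_d-1}$ lies in $F_{-1} M_\LL$ or deeper with depth increasing in... actually uniformly, by bounding dimensions of the strata of $\Sym^n X$ coming from the lower-dimensional pieces. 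I would also remark that the "only if" direction could alternatively be seen by applying a point-count or $E$-polynomial motivic measure and reducing to the classical fact that $\sum_n \#\Sym^n(X)(\FF_q)\, t^n = Z(X/\FF_q, t)$ has the analogous leading behavior, but the intrinsic argument above is cleaner and avoids specializing.
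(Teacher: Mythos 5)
Your proposal is correct and follows essentially the same route as the paper's proof: express the Kapranov zeta function as $Z(X,t)=\prod_i (1-\LL^i t)^{-a_i}$ using $[\Sym^n(\LL^i)]=\LL^{ni}$, substitute $t\mapsto t/\LL^{\dim X}$, and observe that the lower-order factors converge in $\widehat{M_\LL}$ while the top factor $(1-t)^{-a_d}$ forces $a_d=1$ for stability. Your additional remarks on making the coefficientwise convergence rigorous and on why $a_d\geq 1$ only elaborate on steps the paper treats as immediate.
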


\begin{proof}
   We compute the motivic zeta function of $X$ from the polynomial form $[X]=\sum_{i=0}^s a_i\LL^i$ using that the motivic zeta function is motivic:
   \[Z(X,t)=\prod_{i=0}^s\left(\sum_{n=0}^\infty [\Sym^n(\LL^i)]t^n\right)^{a_i}.\]
   Using that $[\Sym^n(\LL^{i})]=\LL^{ni}$ (see for instance, Lemma 4.4 of \cite{gottsche2000motive}), we have
   \[Z(X,t)=\prod_{i=0}^s\left(\sum_{n=0}^\infty \LL^{ni}t^n\right)^{a_i}=\prod_{i=0}^s \frac{1}{(1-q^it)^{a_i}}.\]
   Thus, if $a_s=1$, then
   \[\lim_{n\to \infty}\frac{[\Sym^n(X)]}{\LL^{ns}}=\left((1-t)Z(X,t/\LL^s)\right)|_{t=1}=\frac{1-t}{\prod_{i=0}^s (1-\LL^{i-s}t)^{a_i}}=\frac{1}{\prod_{i=0}^{s-1} (1-\LL^{i-s}t)^{a_i}}\]
   and if $a_s>1$, it is easy to see that the limit does not exist.
\end{proof}

This lemma above shows that the $G$-representation varieties corresponding to free groups where $G$ is a connected linear algebraic group are motivically representation stable proving Conjecture \ref{introcon:B} using Example \ref{ex:freemotrepstab}. Indeed, $G^n$ is motivically representation stable if and only if $\Sym^n G$ is motivically stable as a sequence of $G$-varieties.

Now, we turn our attention to the character stacks of free groups. Again, by Example \ref{ex:freemotrepstab}, $G^n$ is motivically representation stable if and only if $\Sym^n_G G$ is motivically stable as a sequence of $G$-varieties.

The key examples we consider (similarly to the other examples in the paper) are $G=\GL_r$ and the groups of upper triangular matrices. The key statement we need is the following.

\begin{proposition}\label{prop:symmstabilizeGLr}
Consider the natural action of $\GL_r$ on $\AA^r$. Then,
\[\lim_{n\to \infty}\frac{[\Sym^n_{\GL_r}(\AA^r)]}{\LL^{nr}}=\frac{[\GL_r]}{\prod_{i=1}^r (\LL^r-\LL^i)}\]
in $\widehat{M_\LL^G}$ where $\GL_r$ acts free on $[\GL_r]$ by left multiplication.
\end{proposition}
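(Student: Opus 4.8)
The plan is to reduce the computation of $\lim_{n\to\infty}[\Sym^n_{\GL_r}(\AA^r)]/\LL^{nr}$ to an application of Lemma~\ref{lem:openstabilizeallstabilize} together with an explicit evaluation of the relevant motivic zeta function. First I would decompose the $\GL_r$-variety $\AA^r$ into the open orbit $U = \AA^r\setminus\{0\}$ and the closed fixed point $Z = \{0\}$. Since $\dim Z = 0 < r = \dim \AA^r$, Lemma~\ref{lem:openstabilizeallstabilize} applies and gives
\[
\lim_{n\to\infty}\frac{[\Sym^n_{\GL_r}(\AA^r)]}{\LL^{nr}} = Z_{\GL_r}(Z,\LL^{-r})\cdot \lim_{n\to\infty}\frac{[\Sym^n_{\GL_r}(U)]}{\LL^{nr}},
\]
so the task splits into (a) computing the equivariant motivic zeta function of the point $Z = \{0\}$ with trivial $\GL_r$-action, and (b) showing that the symmetric powers of the homogeneous space $U = \AA^r\setminus\{0\}$ stabilize and computing the limit.

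For part (b), the key point is that $\GL_r$ acts \emph{transitively} on $U$, with stabilizer of a point a closed subgroup $P \le \GL_r$ (a "mirabolic"-type subgroup, the stabilizer of a nonzero vector), so that $U \cong \GL_r/P$ as $\GL_r$-varieties and $[U] = [\GL_r]/[P]$. Because the action is transitive, the $n$-th symmetric power $\Sym^n_{\GL_r}(U) = U^n/S_n$ should itself be expressible in a controlled way: I expect that passing to the quotient by $\GL_r$ one gets $[\Sym^n_{\GL_r}(U)/\GL_r] = [(U^n/S_n)/\GL_r]$, and using that $U^n = (\GL_r/P)^n$ one can identify the relevant classes. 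Concretely, the cleanest route is probably to note $[U] = \LL^r - 1$, which is a polynomial in $\LL$ that is \emph{not} monic only in the sense that its leading coefficient is $1$ — so by Lemma~\ref{lem:motstabformonic} (applied after forgetting the group action, or in the equivariant refinement) $\Sym^n(U)$ stabilizes, and the equivariant version with $\GL_r$ acting should push the torsor structure through. I would compute $Z_{\GL_r}(U,t)$ via the equivariant analogue of the zeta-function identity in the proof of Lemma~\ref{lem:motstabformonic}, obtaining the limit $\lim_n [\Sym^n_{\GL_r}(U)]/\LL^{nr}$ as $\left((1-t)Z_{\GL_r}(U,t/\LL^r)\right)\big|_{t=1}$, and similarly $Z_{\GL_r}(Z,t) = \sum_n [\B\GL_r]$-type terms — actually for the point with trivial action $\Sym^n(Z) = Z$, so $Z_{\GL_r}(Z,t) = 1/(1-t)$ in the naive grading and the contribution $Z_{\GL_r}(Z,\LL^{-r})$ is simply $1/(1-\LL^{-r})$.

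Assembling: $Z_{\GL_r}(Z,\LL^{-r}) = \frac{1}{1-\LL^{-r}} = \frac{\LL^r}{\LL^r - 1}$, and the $U$-limit should come out to $\frac{[\GL_r]}{\LL^r(\LL^r-1)\cdot\prod_{i=1}^{r-1}(\LL^r - \LL^i)}$ or some such, so that the product telescopes to $\frac{[\GL_r]}{\prod_{i=1}^r(\LL^r - \LL^i)}$. I would double-check the bookkeeping by the sanity check $r=1$: there $\GL_1 = \GG_m$ acts on $\AA^1$, $U = \GG_m$, $[\GL_1] = \LL - 1$, and the claimed limit is $\frac{\LL-1}{\LL - \LL} $ — wait, $\prod_{i=1}^1(\LL^1 - \LL^1) = 0$, which signals that the formula must be read with the understanding that for $r=1$ the relevant quotient stack $[\GG_m/\GG_m]$ has class $\frac{\LL-1}{\LL-1}=1$ and one should interpret via the $\B\GG_m$ contributions; I would make sure the statement and normalization match the intended reading (likely the product is $\prod_{i=0}^{r-1}$ or includes a $\LL^{r^2}$ from the $[V/\GL_r]$ normalization).

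\textbf{Main obstacle.} The hard part will be part (b): making rigorous the claim that the \emph{equivariant} symmetric powers $\Sym^n_{\GL_r}(\AA^r\setminus\{0\})$ stabilize in $\widehat{M_\LL^{\GL_r}}$ and identifying the limit with the stated rational expression in $[\GL_r]$ — Lemma~\ref{lem:motstabformonic} is stated non-equivariantly, so I either need an equivariant refinement of the motivic zeta function computation (using that $[\Sym^n_{\GL_r}(\LL^i \times -)]$ behaves like $\LL^{ni}$ times the base, which requires an equivariant version of G\"ottsche's lemma) or a direct geometric argument that $U^n/S_n$, together with its diagonal $\GL_r$-action, has a class that stabilizes after dividing by $\LL^{nr}$. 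Extracting the precise denominator $\prod_{i=1}^r(\LL^r-\LL^i)$ — which is exactly $[\GL_r]$ divided by $\LL^{\binom{r}{2}}$ up to sign, reflecting the order of $\GL_r(\FF_q)$ — will require care with which normalization of $[\GL_r]$ and which power of $\LL$ is being absorbed, and I expect most of the real work to be in that matching.
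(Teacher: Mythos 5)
There is a genuine gap, and it sits exactly where you flag your ``main obstacle'': you never produce the equivariant limit, and the reduction you propose does not get at the real difficulty. Splitting $\AA^r = U \sqcup \{0\}$ and invoking transitivity of $\GL_r$ on $U$ does not help, because the relevant action is the \emph{diagonal} action on $\Sym^n$, which is neither transitive nor free — configurations lying in a proper subspace, or permuted by a nontrivial $A\in\GL_r$, have nontrivial stabilizers — so writing $U\cong \GL_r/P$ gives no handle on $[\Sym^n_{\GL_r}(U)]$ in $\widehat{M_\LL^G}$, and Lemma~\ref{lem:motstabformonic} (as you note) is non-equivariant, while the ``equivariant G\"ottsche lemma'' you would need is precisely what is not available. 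The paper's proof supplies the two missing ideas. First, instead of removing the origin, one removes the locus of $\Sym^n(\AA^r)$ where $\GL_r$ does \emph{not} act freely, and shows it is dimensionally negligible: the non-free tuples are covered by $\bigsqcup_{\sigma\in S_n}\{(A,x_1,\dots,x_n)\in(\GL_r\setminus\{\id\})\times\AA^{rn} \mid Ax_i=x_{\sigma(i)}\ \forall i\}$, which has dimension at most $\dim\GL_r + nr - n$ (each $i$ either forces $x_i$ into the eigenvalue-$1$ eigenspace of $A\neq\id$, or is determined by some $x_j$), so after dividing by $\LL^{nr}$ its contribution vanishes in the limit. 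Second, on the free locus $X_n$ one uses that $\GL_r$ is \emph{special}: $X_n\to X_n/\GL_r$ is Zariski-locally trivial, so equivariantly $[X_n]=[X_n/\GL_r]\cdot[\GL_r\ \text{with free action}]$ with trivial action on the first factor, and since $[X_n/\GL_r]=[X_n]/[\GL_r]\sim \LL^{nr}/[\GL_r]$ non-equivariantly, the limit is the class of $\GL_r$ with its free left-multiplication action divided by the polynomial $[\GL_r]$. Neither the free-locus covering argument nor the use of specialness appears in your proposal, and without them the identification of the limit — in particular the appearance of $[\GL_r]$ with free action in the numerator — is not established.

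Your $r=1$ sanity check is a fair catch: as printed, $\prod_{i=1}^{r}(\LL^r-\LL^i)$ contains the zero factor $i=r$, and the denominator should be read as the polynomial expression for $[\GL_r]$, i.e.\ $\prod_{i=0}^{r-1}(\LL^r-\LL^i)$ (equivalently, re-index the product). But this is an indexing slip in the statement, not a substitute for the missing argument; fixing it does not close the gap above.
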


\begin{proof}
Let us denote by $X_n$ the part of $\Sym^n(\AA^r)$ where $\GL_r$ acts freely. We claim that
\[\lim_{n\to \infty}\frac{[X_n]}{\LL^{nr}}=\lim_{n\to \infty}\frac{[\Sym^n(\AA^r)]}{\LL^{nr}}=1\]
holds in $\widehat{M_\LL}$. 

Recall that an algebraic group $G$ is called \textit{special} if any $G$-torsor is Zariski-locally trivial, in other words, if $E\to X$ is a $G$-torsor, then the relation $[E]=[G][X]$ holds in the Grothendieck-ring of varieties. The group $\GL_r$ is a special group, and thus, our claim implies the statement, because in this case
\[\lim_{n\to \infty}\frac{[X_n]}{\LL^{nr}}=\lim_{n\to \infty}\frac{[X_n/\GL_r][\GL_r]}{\LL^{nr}}=\frac{[\GL_r]}{\prod_{i=1}^r (\LL^r-\LL^i)}\]
holds in $\widehat{M_\LL^G}$ since the $\GL_r$-action on $[X_n/\GL_r]$ is trivial, so $[X_n/\GL_r]$ can be replaced by $\frac{[\Sym^n(\AA^r)]}{\GL_r}$. In order to show the claim we will show that $X_n$ can be covered by varieties of negligible dimension. 

In fact, we have a cover of the form
\[\bigsqcup_{\sigma\in S_n} \{A, (x_1,...,x_n)\in (\GL_r\setminus id)\times \AA^{rn}| \forall i:Ax_i=x_{\sigma(i)} \}.\]
This cover is of dimension at most $\dim \GL_r + nr-n$, because either 1) all $x_i$'s have to be eigenvectors of $A$ with eigenvalue 1 or 2) at least one of the equations $x_i=Ax_j$ hold with $i\ne j$, so $x_i$ is determined by $x_j$. This concludes the proof of the statement.
\end{proof}

This proposition implies an important technical statement that was used in \cite{ramras2021homological} in the context of FI-modules.

\begin{corollary}\label{cor:stabsymmetric}
    Let $G$ be a linear algebraic group acting on $\AA^r$ via a homomorphism $\phi:G\to \GL_r$. Then,
    \[\lim_{n\to \infty}\frac{[\Sym^n_G(\AA^r)]}{\LL^{nr}}=\frac{[\GL_r]}{\prod_{i=1}^r (\LL^q-\LL^i)}\]
    holds in $\widehat{M_\LL^G}$ where $G$ acts on $\GL_r$ via the homomorphism $\phi$.
\end{corollary}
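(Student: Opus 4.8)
The plan is to deduce Corollary~\ref{cor:stabsymmetric} from Proposition~\ref{prop:symmstabilizeGLr} via the restriction map $\Res_G^{\GL_r}\colon\widehat{M_\LL^{\GL_r}}\to\widehat{M_\LL^{G}}$ attached to the homomorphism $\phi\colon G\to\GL_r$. First I would observe that, since the $G$-action on $\AA^r$ is obtained from the $\GL_r$-action by restriction along $\phi$, the $n$-th symmetric power $\Sym^n_G(\AA^r)$ is exactly the $G$-variety $\Res_G^{\GL_r}\bigl(\Sym^n_{\GL_r}(\AA^r)\bigr)$: taking $S_n$-quotients commutes with restricting the (commuting) $\GL_r$-action to $G$, because the underlying variety and the quotient map are unchanged and only the acting group shrinks. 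The same remark applies to the target: $\Res_G^{\GL_r}[\GL_r]$ is $[\GL_r]$ with $G$ acting by left multiplication through $\phi$, which is precisely the $G$-variety appearing on the right-hand side of the Corollary.

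Next I would invoke the continuity of $\Res_G^{\GL_r}$ on the completed localized Grothendieck rings. By Corollary~\ref{cor:resindmotivic} the restriction map is a continuous group homomorphism $\widehat{M_\LL^{\GL_r}}\to\widehat{M_\LL^{G}}$ (it clearly sends $\LL$ to $\LL$, so it is compatible with the localization and the filtration). Hence it commutes with the limit computed in Proposition~\ref{prop:symmstabilizeGLr}:
\[
\lim_{n\to\infty}\frac{[\Sym^n_G(\AA^r)]}{\LL^{nr}}
=\lim_{n\to\infty}\frac{\Res_G^{\GL_r}[\Sym^n_{\GL_r}(\AA^r)]}{\LL^{nr}}
=\Res_G^{\GL_r}\!\left(\lim_{n\to\infty}\frac{[\Sym^n_{\GL_r}(\AA^r)]}{\LL^{nr}}\right)
=\Res_G^{\GL_r}\!\left(\frac{[\GL_r]}{\prod_{i=1}^r(\LL^r-\LL^i)}\right).
\]
Since the denominator $\prod_{i=1}^r(\LL^r-\LL^i)$ is a polynomial in $\LL$ fixed by $\Res_G^{\GL_r}$, the right-hand side equals $\frac{[\GL_r]}{\prod_{i=1}^r(\LL^r-\LL^i)}$ with $G$ acting on $[\GL_r]$ through $\phi$, which is the asserted formula (note the $\LL^q$ in the displayed Corollary is a typo for $\LL^r$).

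The only genuine point to check carefully — and the step I expect to be the main, if minor, obstacle — is the identification $\Res_G^{\GL_r}\bigl(\Sym^n_{\GL_r}(\AA^r)\bigr)=\Sym^n_G(\AA^r)$ at the level of $G$-varieties, i.e. that forming the $S_n$-quotient of $(\AA^r)^n$ is insensitive to whether one remembers the full $\GL_r$-action or only its restriction to $G$. This is true because the GIT quotient $(\AA^r)^n/S_n$ is a fixed variety independent of any other group action, the residual $\GL_r$-action on it is the one induced functorially by the diagonal action on $(\AA^r)^n$, and restricting that induced action along $\phi$ gives the induced $G$-action on the same quotient; formally this is the commutativity of $\Res_G^{\GL_r}$ with the symmetric-power functor, which holds since $\Res$ does not alter underlying varieties or equivariant structure maps. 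Once this compatibility is recorded, the corollary follows immediately by applying the continuous ring (in particular group) homomorphism $\Res_G^{\GL_r}$ to the equality of Proposition~\ref{prop:symmstabilizeGLr}.
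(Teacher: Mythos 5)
Your proposal is correct and matches the paper's own proof: both deduce the statement by applying the continuous restriction map $\Res^{\GL_r}_G$ of Corollary~\ref{cor:resindmotivic} to the limit of Proposition~\ref{prop:symmstabilizeGLr}, using the compatibility $\Res^{\GL_r}_G \circ \Sym^n_{\GL_r} = \Sym^n_G \circ \Res^{\GL_r}_G$. Your extra care in justifying that compatibility (and flagging the $\LL^q$ typo for $\LL^r$) only makes the argument more complete than the paper's two-line version.
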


\begin{proof}
    We have that $\Res_H^G$ provides a continuous map $\widehat{M_\LL^G}\to \widehat{M_\LL^H}$ using Corollary \ref{cor:resindmotivic}. Note that
    \[\Res^{\GL_r}_G \circ \Sym^n_{\GL_r}=\Sym^n_G\circ \Res^{\GL_r}_G\]
    which implies our statement.
\end{proof}

Using the corollary above, we are ready to show that the $G$-character stacks corresponding to free groups satisfy motivic representation stability in the cases of $G=\GL_r$ or $G=\U_r$.

\begin{theorem}\label{thm:conjB}
    The $G$-character stacks corresponding to free groups satisfy motivic representation stability in the cases of $G=\GL_r$ or $G=\U_r$.
\end{theorem}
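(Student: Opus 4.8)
The plan is to reduce the theorem to the motivic stability of a single sequence of \emph{equivariant} symmetric powers and then to apply Corollary~\ref{cor:stabsymmetric}. The character stack attached to the free group $F_n$ is the quotient stack $[G^n/G]$, where $G$ acts by conjugation and $S_n$ permutes the $n$ tautological coordinates, and these two actions commute. Interpreting motivic representation stability of the character stacks $[G^n/G]$ as motivic representation stability of the underlying sequence of $G\times S_n$-varieties $\{G^n\}_n$ in $\widehat{M_\LL^G}$, Example~\ref{ex:freemotrepstab} applied to the $G$-variety $X=G$ (with conjugation action) shows that it suffices to prove that the sequence of $G$-varieties $\{\Sym^n_G(G)\}_n$ is motivically stable in $\widehat{M_\LL^G}$ for $G=\GL_r$ and $G=\U_r$.

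To get at $\Sym^n_G(G)$, I would realize $G$ as a dense open subvariety of an affine space carrying a \emph{linear} conjugation action of $G$. For $G=\GL_r$, take $\GL_r\subset\mathrm{Mat}_r\cong\AA^{r^2}$: for a fixed invertible matrix $g$ the map $x\mapsto gxg^{-1}$ is linear, so conjugation is a representation $\phi\colon\GL_r\to\GL_{r^2}$; the locus $\GL_r=\{\det\neq 0\}$ is $\phi$-invariant and open, and its complement $\{\det=0\}$ is closed, $\GL_r$-invariant, of dimension $r^2-1<r^2$. For $G=\U_r$ one argues in the same way with $\U_r$ sitting inside the affine space of upper-triangular matrices: conjugation by $\U_r$ preserves upper-triangularity and fixes the diagonal, hence is linear, and the complement of $\U_r$ (where some diagonal entry vanishes) has dimension strictly smaller than the ambient one — in the purely unipotent normalization this complement is even empty.

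With this in place the argument is short. Corollary~\ref{cor:stabsymmetric}, applied to the homomorphism $\phi\colon G\to\GL_N$ above (with $N=r^2$, or $N$ the dimension of the space of upper-triangular matrices when $G=\U_r$), shows that $\{\Sym^n_G(\AA^N)\}_n$ is motivically stable in $\widehat{M_\LL^G}$. Now apply Lemma~\ref{lem:openstabilizeallstabilize} equivariantly with $X=\AA^N$, with $Z$ the complement of $G$ (closed, $G$-invariant, of strictly smaller dimension) and $U=G$: the symmetric powers of $X$ stabilize if and only if those of $U=G$ do. Hence $\{\Sym^n_G(G)\}_n$ is motivically stable in $\widehat{M_\LL^G}$, and by the reduction of the first paragraph the character stacks $[\GL_r^n/\GL_r]$ and $[\U_r^n/\U_r]$ are motivically representation stable.

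The step requiring the most care is the first one: making precise what ``motivically representation stable'' means for a sequence of character stacks and checking that it is governed by the equivariant motivic stability of $\{\Sym^n_G(G)\}_n$ in $\widehat{M_\LL^G}$ — one must match the normalizing powers of $\LL$ and use that $\GL_r$ and $\U_r$ are special groups, so that passing to the stack quotient is transparent. The geometric inputs, namely that conjugation is a linear action and that the degenerate locus has small dimension, together with the invocations of Corollary~\ref{cor:stabsymmetric} and Lemma~\ref{lem:openstabilizeallstabilize}, are routine.
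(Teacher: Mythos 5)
Your proposal is correct and follows essentially the same route as the paper's own (much terser) proof: reduce via Example~\ref{ex:freemotrepstab} to motivic stability of $\Sym^n_G(G)$ with the conjugation action, linearize the conjugation action by embedding $G$ as an invariant open subset of a matrix space (so that Proposition~\ref{prop:symmstabilizeGLr}, via Corollary~\ref{cor:stabsymmetric}, applies), and pass from the affine space to the open locus $G$ by Lemma~\ref{lem:openstabilizeallstabilize}. Your write-up just makes explicit the steps the paper leaves implicit (the homomorphism $\phi\colon G\to\GL_N$ given by conjugation and the smallness of the degenerate locus), so no further comparison is needed.
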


\begin{proof}
    In order to show motivic representation stability, it is enough to show that the sequence of symmetric powers $\Sym^n_G(X)$ is motivically stable (see Example \ref{ex:freemotrepstab}). In the case of $G=\GL_r$, we consider the linear action of $\GL_r$ on the variety of $[r\times r]$ matrices (which is isomorphic to $\AA^{r^2}$). By Corollary \ref{cor:stabsymmetric}, we get that $\Sym^n_{\GL_r}(\AA^{r^2})$ is motivically stable. Since $\GL_r$ is an open subset of $\AA^{r^2}$, Lemma  \ref{lem:openstabilizeallstabilize} implies our statement. The case of $G=\U_r$ is very similar, we consider the embedding $\U_r\to \GL_r$, and the linear action of the group of invertible upper triangular matrices on variety of upper triangular matrices (which is an affine space), and thus Corollary \ref{cor:stabsymmetric} together with Lemma \ref{lem:openstabilizeallstabilize} imply the statement.
\end{proof}

\subsection{Free Abelian groups}

In this section, we study the motivic representation stability of the representation varieties $C_n(G):=\Hom_{Gp}(\ZZ^n, G)$ corresponding to the free Abelian groups $\ZZ^n$, and of the corresponding character stacks $[C_n(G)/G]$. As before, the symmetric groups, $S_n$, act by permuting the coordinates of $G^n$, and the group $G$ acts by conjugation. 

\subsubsection{Branching matrix}

First, we provide a computational technique to find the virtual classes of the variety of commuting $n$-tuples, $C_n(G)$, for reductive groups via branching matrices \cite{sharma20}. We illustrate the method for the reductive group $\GL_2(k)$ and $\GL_3(k)$.


\textbf{The variety of commuting $n$-tuples of $\GL_2(k)$}

We begin by understanding the virtual classes of subvarieties of $\GL_2(k)$. 

The virtual class of $\GL_2(k)$ is $q(q-1)(q^2-1)$. It has three types of elements
\begin{itemize}
    \item scalar matrices: the closed subvariety of scalar matrices has virtual class $q-1$.
    \item $J$-type: the open subvariety of the closed subvariety of matrices with the same eigenvalue (but not the scalar matrices) has virtual class $(q-1)(q^2-1)$. These are the matrices that are conjugate to a matrix of the form
    \[\left(\begin{array}{cc}\lambda & 1\\0 & \lambda\end{array}\right).\]
    \item $M$-type: the open subvariety of matrices with two distinct eigenvalues has virtual class $(q-1)(q^3-q^2-q)$. These are the matrices that are conjugate to a matrix of the form 
    \[\left(\begin{array}{cc}\lambda & 0\\0 & \mu\end{array}\right)\]
    with $\lambda\ne \mu$.
\end{itemize}

A different way to calculate the class of $M$-type elements is as follows. Any such element is conjugate to a $[2\times 2]$ matrix with different eigenvalues 
\[\left(\begin{array}{cc}\lambda & 0\\0 & \mu\end{array}\right), \lambda\ne \mu.\]
However, the order of the eigenvalues is not well-defined, thus $M$ can be identified with the quotient
\[\left(\GL_2(k)/D \times \left\{\left(\begin{array}{cc}\lambda & 0\\0 & \mu\end{array}\right)| \lambda\ne \mu\right\}\right)/(\ZZ/2\ZZ)\cong M\]
where the isomorphism is given by
\[(P, \lambda, \mu)\mapsto P\left(\begin{array}{cc}\lambda & 0\\0 & \mu\end{array}\right)P^{-1}.\]
and the $\ZZ/2\ZZ$-action is given by swapping the eigenvalues $\lambda\leftrightarrow \mu$ and on $\GL_2(k)/D$ is given by multiplication by $\left(\begin{array}{cc}0 & 1\\1 & 0\end{array}\right)$ on the right. With respect to the $\ZZ/2\ZZ$-action, we have the following decomposition (see the notation of Example \ref{ex:mots2decomp}).

\begin{lemma}\label{lem:pmgl2classes}
    The virtual classes of the $+$ and $-$ parts of the varieties are as follows.
\begin{itemize}
\item[a)] $[C]:=[\GL_2(k)/D]^+=q^2$
\item[b)] $[E]:=[\GL_2(k)/D]^-=q$
\item[c)] $\left[\left\{\left(\begin{array}{cc}\lambda & 0\\0 & \mu\end{array}\right)|\lambda\ne \mu, \lambda\mu\ne 0\right\}\right]^+=q^2-2q+1$
\item[d)] $\left[\left\{\left(\begin{array}{cc}\lambda & 0\\0 & \mu\end{array}\right)|\lambda\ne \mu, \lambda\mu\ne 0\right\}\right]^-=1-q$
\item[e)] $\left[\left\{\left(\begin{array}{cc}\lambda & 0\\0 & \mu\end{array}\right)|\lambda\mu\ne 0\right\}\right]^+=q^2-q$
\item[f)] $\left[\left\{\left(\begin{array}{cc}\lambda & 0\\0 & \mu\end{array}\right)|\lambda\mu\ne 0\right\}\right]^-=1-q$.
\end{itemize}

\end{lemma}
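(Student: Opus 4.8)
The plan is to split each of the three varieties into a product (or union) of pieces on which the $\ZZ/2\ZZ$-action is easy to analyze, compute the $+$ and $-$ parts of each piece, and recombine. Throughout I use the fact that for a $\ZZ/2\ZZ$-variety $Y$ one has $[Y] = [Y]^+ + [Y]^-$ with $[Y]^+ = [Y/(\ZZ/2\ZZ)]$, together with the multiplicativity $[Y \times Y']^\pm$ coming from the ring structure: if $\epsilon$ denotes the sign representation then $[Y]_{\ZZ/2\ZZ} = [Y]^+ \otimes \mathbf{1} + [Y]^- \otimes \epsilon$, and since $\epsilon^{\otimes 2} = \mathbf{1}$ we get $[Y\times Y']^+ = [Y]^+[Y']^+ + [Y]^-[Y']^-$ and $[Y\times Y']^- = [Y]^+[Y']^- + [Y]^-[Y']^+$. (Here I am implicitly using that $\AA^1$, $\GG_m$, and the relevant tori lie in $\fV^{\fH}_{\ZZ/2\ZZ}$ for the good set $\fH = \{1, \ZZ/2\ZZ\}$, which is exactly Theorem \ref{thm:motkunneth}.)

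First I would handle the diagonal-torus pieces. The set $T = \{\mathrm{diag}(\lambda,\mu) \mid \lambda\mu \neq 0\} \cong \GG_m^2$ carries the swap action; conjugating coordinates to $s = \lambda+\mu$, $p = \lambda\mu$ (or rather using $\GG_m^2 \cong \GG_m \times \GG_m$ via $(\lambda\mu, \lambda\mu^{-1})$ when a square root is available — cleaner is the coordinates $u = \lambda$, and the orbit map) one sees $T/(\ZZ/2\ZZ)$ is again $2$-dimensional. The quickest route: $\GG_m \times \GG_m$ with swap is $\Ind_1^{\ZZ/2\ZZ}(\GG_m)$ off the diagonal plus the fixed diagonal $\GG_m$; so $[T]^+ = \tfrac{1}{2}([T] - [\text{diag}]) + [\text{diag}] = \tfrac12(q-1)^2 + \tfrac12(q-1) = \tfrac12(q-1)q = q^2 - q$... wait, that is $\tfrac12(q^2-q)$; the stated answer $q^2 - q$ forces me instead to realize $T \setminus \Delta$ with its free action directly as $\GG_m \times \GG_m$ with the \emph{other} coordinatization making the quotient $\GG_m \times \GG_m$ minus something — I would instead verify the identity $[T]^+ - [T]^- = [T^{\ZZ/2\ZZ}] = [\GG_m] = q-1$ (Lefschetz-type fixed-point identity for $\ZZ/2\ZZ$-varieties, valid motivically here since everything is a polynomial in $\LL$ and the action is nice) together with $[T]^+ + [T]^- = [T] = (q-1)^2$, giving $[T]^+ = \tfrac12((q-1)^2 + (q-1))$ — so the honest check is that the quotient stack versus quotient variety bookkeeping in the Lemma matches the $\tfrac12$ conventions of the paper, and then the distinct-eigenvalue piece $T\setminus\Delta$ follows by subtracting the (trivial-action) diagonal: $[T\setminus\Delta]^+ = [T]^+ - (q-1)$, $[T\setminus\Delta]^- = [T]^-$.

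Next I would compute $[\GL_2]^\pm$ for the right-multiplication-by-$\smatrix{0&1\\1&0}$ action. The clean approach is to use that $\GL_2 \to \GL_2/D \cong \PP^1 \times$(something) is not quite a product, so instead use the Bruhat-type or the explicit description: $\GL_2 = \GL_2/B \times_{\ldots}$; more concretely, right multiplication by the permutation matrix has no fixed points on $\GL_2$ (a matrix $A$ with $A w = A$ would need $w = \id$), so $[\GL_2]^+ - [\GL_2]^- = [\GL_2^{\ZZ/2\ZZ}] = 0$, hence $[\GL_2]^+ = [\GL_2]^- = \tfrac12[\GL_2] = \tfrac12 q(q-1)(q^2-1)$ — but the Lemma asserts $q^2$ and $q$, which do \emph{not} sum to $q(q-1)(q^2-1)$, so I must be misreading the action: the action in question is presumably on $\GL_2/D$, the coset space, not on $\GL_2$ itself, and $[\GL_2/D] = [\GL_2]/[D] = q(q-1)(q^2-1)/(q-1)^2 = q(q+1)$; then $[\GL_2/D]^+ + [\GL_2/D]^- = q^2+q$ and the fixed locus of right-mult-by-$w$ on $\GL_2/D$ is the set of cosets $gD$ with $gwg^{-1} \in D$, i.e. $g$ conjugating $w$ into $D$ — this is one point (the standard torus normalizer piece), giving $[\text{fixed}] = $ something like $q+1$ or $1$; matching $q^2 - q = $ difference pins it down. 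The \textbf{main obstacle} is precisely this: getting the fixed-locus / quotient-stack normalization conventions to match the paper's factor-of-$\tfrac12$ bookkeeping, and correctly identifying on which space ($\GL_2$, $\GL_2/D$, or $\PP^1$) the $\ZZ/2\ZZ$ acts in each bullet. Once that is fixed, each bullet is a two-line computation: write $[Y]^\pm = \tfrac12([Y] \pm [Y^{\ZZ/2\ZZ}])$ using the motivic fixed-point identity, plug in the known classes $[\GL_2] = q(q-1)(q^2-1)$, $[D]=(q-1)^2$, $[T]=(q-1)^2$, $[\Delta]=q-1$, and simplify.
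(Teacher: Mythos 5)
Your plan hinges on two claims that are simply false in $\K_0(\Var_k)$, and they produce answers off by exactly the factor of $2$ that you notice twice and never resolve. Neither the claim that a free $\ZZ/2\ZZ$-quotient has class $\tfrac12[Y]$ nor the ``motivic fixed-point identity'' $[Y]^{\pm}=\tfrac12\bigl([Y]\pm[Y^{\ZZ/2\ZZ}]\bigr)$ holds: already $\GG_m$ with the free involution $x\mapsto -x$ has quotient $\GG_m$ again, and for the torus $T=\{\mathrm{diag}(\lambda,\mu):\lambda\mu\neq 0\}$ with the swap one has $[T]^{+}=[T/(\ZZ/2\ZZ)]=[\Sym^2\GG_m]=[\Sym^2\AA^1]-[\GG_m]-[\mathrm{pt}]=q^2-q$, not $\tfrac12(q^2-q)$, while $[T^{\ZZ/2\ZZ}]=q-1$ differs from $[T]^{+}-[T]^{-}=q^2-1$. (Averaging formulas of this shape are valid for finite-field point counts or E-polynomials only if $[Y^{\ZZ/2\ZZ}]$ is replaced by the twisted form of $Y$; they are not motivic.) There is no hidden factor-of-$\tfrac12$ convention in the paper to appeal to: in the motivic decomposition for the good set $\{1,\ZZ/2\ZZ\}$ the $+$ part is by definition the class of the quotient variety, so the mismatch you attribute to ``quotient stack versus quotient variety bookkeeping'' is a genuine error in your method, not in the statement. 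The paper's (terse) proof goes the other way: stratify and use that $[\AA^n]^{+}=[\AA^n]$, $[\AA^n]^{-}=0$ for a linear action (compatible with Theorem \ref{thm:motkunneth}; concretely $\Sym^2\AA^1\cong\AA^2$). For the diagonal matrices $\cong\AA^2$ with the swap, the $+$ part is $q^2$; the locus $\{\lambda\mu=0\}$ consists of two swapped lines, with $+$ part $q$ and $-$ part $q-1$, which gives the two bullets for $\lambda\mu\neq0$; removing the pointwise-fixed diagonal $\GG_m$ (which contributes $q-1$ to the $+$ part and $0$ to the $-$ part) gives the two bullets for $\lambda\neq\mu$.

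You are right that the first two bullets must be read as referring to $X=\GL_2(k)/D$ with right multiplication by the permutation matrix $w$ (the paper's notation is loose there; $[X]=q^2+q$ is what is used afterwards). But your proposed way of pinning down the values via the fixed locus fails as well: the action on $\GL_2/D$ is free, since $gwD=gD$ would force $w\in D$, so your identity would yield $[X]^{+}=[X]^{-}$, contradicting $q^2\neq q$, and the fixed locus is empty rather than ``one point'' or ``$q+1$''. The correct computation identifies $X$ with $\PP^1\times\PP^1\setminus\Delta$ (ordered pairs of distinct lines in $k^2$) with the swap action, whence $[X]^{+}=[X/(\ZZ/2\ZZ)]=[\Sym^2\PP^1]-[\PP^1]=[\PP^2]-[\PP^1]=q^2$ and $[X]^{-}=[X]-[X]^{+}=q$. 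In short: replace the averaging/fixed-point shortcut by honest quotient classes computed via equivariant stratification and symmetric powers of affine and projective spaces; with that change your decomposition into pieces (torus, diagonal, coset space) does lead to the stated values.
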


\begin{proof}
We can identify $\GL_2(k)/D$ with the open subvariety of $\PP^1(k)\times \PP^1(k)$ that is the open complement of the diagonal subvariety. The action of $\ZZ/2\ZZ$ simply swaps the two coordinates. Therefore, we have that
\[[C]:=[\GL_2(k)/D]^+=[\left((\AA^1\cup\{\infty\})\times (\AA^1\cup \{\infty\})\right)/(\ZZ/2\ZZ)]-[\PP^1]=q^2+q+1-(q+1)=q^2.\]
This implies b) as $[\GL_2(k)/D]^++[\GL_2(k)/D)]^-=q^2+q$.

Similar stratifications can be done in the case of c), d), e) and f). Namely,
\[\left\{\left(\begin{array}{cc}\lambda & 0\\0 & \mu\end{array}\right)|\lambda\ne \mu, \lambda\mu\ne 0\right\}=(\AA^1\setminus\{0\})\times (\AA^1\setminus \{0\})\setminus (\AA^1\setminus \{0\})\]
providing
\[\left[\left\{\left(\begin{array}{cc}\lambda & 0\\0 & \mu\end{array}\right)|\lambda\ne \mu, \lambda\mu\ne 0\right\}\right]^+=q^2-q-(q-1)=q^2-2q+1.\]
This proves c) and d) as 
\[\left[\left\{\left(\begin{array}{cc}\lambda & 0\\0 & \mu\end{array}\right)|\lambda\ne \mu, \lambda\mu\ne 0\right\}\right]=(q-1)(q-2).\] 
Finally, 
\[\left\{\left(\begin{array}{cc}\lambda & 0\\0 & \mu\end{array}\right)|\lambda\mu\ne 0\right\}=(\AA^1\setminus\{0\})\times (\AA^1\setminus \{0\})\]
providing
\[\left[\left\{\left(\begin{array}{cc}\lambda & 0\\0 & \mu\end{array}\right)|\lambda\mu\ne 0\right\}\right]^+=q^2-q\]
that proves both e) and f).
\end{proof}

We note that via the quotient map, $\GL_2(k)/D$ is naturally a variety over $C=(\GL_2/D)/(\ZZ/2\ZZ)$. The lemma above enables us to compute the virtual class of $M$ in a different way.

\begin{lemma}\label{le:MoverC}
 As a variety over $C$, $[M]$ is a linear combination of the classes $[C]$ and $[E]$.
\end{lemma}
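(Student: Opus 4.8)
The plan is to identify $[M]$, as a variety over $C$, with the trivial–representation part of the motivic decomposition (with respect to $\ZZ/2\ZZ$, over the base $C$) of the $\ZZ/2\ZZ$-variety $X\times D^{\mathrm{reg}}$, where $X=\GL_2(k)/D$, where $D^{\mathrm{reg}}=\{(\lambda,\mu)\in\GG_m^2\mid\lambda\ne\mu\}$ is the regular part of the diagonal torus, and where $\ZZ/2\ZZ$ acts diagonally — by swapping $\lambda\leftrightarrow\mu$ and by right multiplication by $\smatrix{0 & 1 \\ 1 & 0}$ on $X$ — so that $M=(X\times D^{\mathrm{reg}})/(\ZZ/2\ZZ)$ and $C=X/(\ZZ/2\ZZ)$, the structure map $M\to C$ being the one induced by the projection $X\times D^{\mathrm{reg}}\to X$ followed by $X\to C$. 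The value of that trivial part will then be computed using the Künneth formula for motivic decompositions (Theorem~\ref{thm:motkunneth}).

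Concretely, for $\ZZ/2\ZZ$ the good set $\fH=\{[\{e\}],[\ZZ/2\ZZ]\}$ gives $R_\QQ(\ZZ/2\ZZ)=\ZZ\langle\mathbf 1\rangle\oplus\ZZ\langle\epsilon\rangle$ with $\epsilon^2=\mathbf 1$, and for any $\ZZ/2\ZZ$-variety $Y$ over a base the decomposition reads $[Y]_{\ZZ/2\ZZ}=[Y/(\ZZ/2\ZZ)]\otimes\mathbf 1+\big([Y]-[Y/(\ZZ/2\ZZ)]\big)\otimes\epsilon$; in particular, over $C$, the coefficient of $\mathbf 1$ in $[Z]_{\ZZ/2\ZZ}$ is the class $[Z/(\ZZ/2\ZZ)\to C]$. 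Hence $[M\to C]$ is the $\mathbf 1$-coefficient of $[X\times D^{\mathrm{reg}}]_{\ZZ/2\ZZ}$, and since $\epsilon^2=\mathbf 1$ this coefficient of a product equals (product of $\mathbf 1$-coefficients)$+$(product of $\epsilon$-coefficients). For $[X]_{\ZZ/2\ZZ}$ over $C$ these are $[X/(\ZZ/2\ZZ)\to C]=[C]$ (the unit of $\K_0(\Var_C)$) and $[X]-[C]$; for the pullback of $[D^{\mathrm{reg}}]_{\ZZ/2\ZZ}$ they are the constants $[D^{\mathrm{reg}}/(\ZZ/2\ZZ)]=(q-1)^2$ and $[D^{\mathrm{reg}}]-[D^{\mathrm{reg}}/(\ZZ/2\ZZ)]=1-q$, both read off from the preceding lemma. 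Granting the Künneth identity $[X\times D^{\mathrm{reg}}]_{\ZZ/2\ZZ}=[X]_{\ZZ/2\ZZ}\cdot[D^{\mathrm{reg}}]_{\ZZ/2\ZZ}$ over $C$, one gets
\[
[M]=(q-1)^2\,[C]+(1-q)\big([X]-[C]\big)=(1-q)\,[X]+(q^2-q)\,[C]
\]
in $\K_0(\Var_C)$, the asserted linear combination; the total class can be sanity-checked against $[M]=(q-1)(q^3-q^2-q)$ stated above.

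To license the Künneth step it suffices, by Theorem~\ref{thm:motkunneth} and the fact that $\fV^{\fH}_{\ZZ/2\ZZ}$ is a $\K_0(\Var)$-subalgebra, to show $D^{\mathrm{reg}}\in\fV^{\fH}_{\ZZ/2\ZZ}$ (its pullback to $C$ then lies in $\fV^{\fH}_{\ZZ/2\ZZ}$ over $C$ by base change, and $X\times D^{\mathrm{reg}}$ is the fiber product over $C$ of that pullback with $X$). Since $\Char k\ne 2$, the coordinate change $u=\lambda+\mu$, $v=\lambda-\mu$ turns the swap into the diagonal action $(u,v)\mapsto(u,-v)$ on $\AA^2$ and identifies $D^{\mathrm{reg}}$ with the complement of the $\ZZ/2\ZZ$-invariant union of the three lines $\{v=0\}$, $\{u=v\}$, $\{u=-v\}$; stratifying that union equivariantly into the fixed point $\{0\}$, the punctured fixed line $\{v=0\}\setminus\{0\}\cong\GG_m$ with trivial action, and the freely permuted pair $(\{u=v\}\cup\{u=-v\})\setminus\{0\}\cong\ZZ/2\ZZ\times\GG_m$, one obtains $[D^{\mathrm{reg}}]=[\AA^2]-1-[\GG_m]-[\ZZ/2\ZZ\times\GG_m]$ in $\K_0(\Var^{\ZZ/2\ZZ})$, and each term lies in $\fV^{\fH}_{\ZZ/2\ZZ}$ by Theorem~\ref{thm:motkunneth} (the diagonal action on $\AA^2$, the unit, the translation action on $\ZZ/2\ZZ$, and products and $\K_0(\Var)$-combinations thereof).

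The main obstacle is conceptual rather than computational: one cannot get away with matching virtual classes in $\K_0(\Var_k)$, since the forgetful map $\K_0(\Var_C)\to\K_0(\Var_k)$ is not injective, so the whole argument must be run relatively over $C$. Correspondingly, the real input is the statement $D^{\mathrm{reg}}\in\fV^{\fH}_{\ZZ/2\ZZ}$, i.e. that the equivariant motive of $D^{\mathrm{reg}}$ is controlled just by the pair $\big([D^{\mathrm{reg}}],[D^{\mathrm{reg}}/(\ZZ/2\ZZ)]\big)$ and not by finer invariants of the (irreducible, hence not Zariski-locally trivial) double cover $D^{\mathrm{reg}}\to D^{\mathrm{reg}}/(\ZZ/2\ZZ)$; this is exactly what makes the naive "stratify until the cover trivializes" strategy unavailable and forces the use of the Künneth machinery.
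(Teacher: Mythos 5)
Your proposal is correct and follows essentially the same route as the paper: identify $M$ with $(X\times D^{\mathrm{reg}})/(\ZZ/2\ZZ)$ and apply the motivic K\"unneth formula (Theorem~\ref{thm:motkunneth}) to split the invariant part as $[X]^+[D^{\mathrm{reg}}]^+ + [X]^-[D^{\mathrm{reg}}]^-$, yielding $[M]=(q^2-q)[C]+(1-q)[X]$ over $C$. Your only additions are the explicit verification (via the coordinate change $u=\lambda+\mu$, $v=\lambda-\mu$ and an equivariant stratification) that $D^{\mathrm{reg}}$ lies in $\fV^{\fH}_{\ZZ/2\ZZ}$, and the care about working relatively over $C$ — details the paper leaves implicit.
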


\begin{proof}
Using the motivic K\"unneth-formula (Theorem \ref{thm:motkunneth}), we have that 
\[M=\left(\GL_2(k)/D\times \left\{\left(\begin{array}{cc}\lambda & 0\\0 & \mu\end{array}\right)|\lambda\ne \mu, \lambda\mu\ne 0\right\}\right)/(\ZZ/2\ZZ)=\]
\[[\GL_2(k)/D]^+ \left\{\left(\begin{array}{cc}\lambda & 0\\0 & \mu\end{array}\right)|\lambda\ne \mu, \lambda\mu\ne 0\right\}^+ + [\GL_2(k)/D]^- \left\{\left(\begin{array}{cc}\lambda & 0\\0 & \mu\end{array}\right)|\lambda\ne \mu, \lambda\mu\ne 0\right\}^-.\]
Thus, the class of $M$ is
\[[M]=[C](q^2-2q+1)+[E](1-q)=[C](q-1)^2+[E](1-q)\]
implying our statement.
\end{proof}

In particular, we obtain that the class of $M$,
\[[M]=q^2(q-1)^2+q(1-q)=(q^2-q)(q^2-q-1).\]

Next, we parametrize the isomorphism classes of centralizer subgroups. We have the following possiblities for the centralizer subgroups of elements.
\begin{itemize}
    \item The whole group $\GL_2(k)$: this is the centralizer of the scalar matrices.
    \item $J$-type: the centralizer of an element of $J$-type. These centralizers are parametrized by $\GL_2(k)/J$ the left cosets of the subgroup $J=\left\{\left(\begin{array}{cc}\mu & b\\0 & \mu\end{array}\right)\right\}$ via conjugating the centralizer $\left\{\left(\begin{array}{cc}\lambda & b\\0 & \lambda\end{array}\right)|\lambda\ne 0\right\}$ with elements of $\GL_2(k)/J$.
    \item $M$-type: once the diagonal form is fixed of an $M$-type matrix, the centralizer subgroup is the subgroup of diagonal matrices. 
\end{itemize}

\begin{remark}\label{rem:gl2commuting}
    We note that the case of $\GL_2(k)$ is quite simple: for any tuple of commuting elements $g_1, ..., g_n$, we have an element $g$, so that $Z(g_1)\cap...\cap Z(g_n)=Z(g)$. 
\end{remark}

Next, we construct the \textit{branching matrix} that encodes how the centralizers of $n$-tuples of elements change. In the case of $\GL_2(k)$, this is fairly simple (see Remark \ref{rem:gl2commuting}), the centralizer can only change from the whole group to another centralizer.

Explicitly, we have the following.
\begin{itemize}
    \item The centralizer of an $n$-tuple of commuting elements was the whole group $\GL_2(k)$: In this case, the centralizer remains the whole group if and only if the next element is a scalar matrix. The centralizer becomes $J$-type if the next element is of $J$-type, and it becomes $M$-type if the next element is of $M$-type.
    \item The centralizer of an $n$-tuple of commuting elements was of $J$-type: In this case, the centralizer has to remain the same if we add one more commuting element. Indeed, after conjugation the centralizer is of the form $\left\{\left(\begin{array}{cc}\lambda & b\\0 & \lambda\end{array}\right)\right\}$, and thus any element of the $n$-tuple is of form $\left(\begin{array}{cc}\mu & c\\0 & \mu\end{array}\right)$ so the next elements must be as well. This provides $q(q-1)$ choices.
    \item The centralizer of an $n$-tuple of commuting elements was of $M$-type: In this case, the centralizer has to remain the same if we add one more commuting element, moreover the centralizer is conjugate to the subgroup of diagonal matrices. We parametrize the $M$-type centralizers via the motivic decomposition theorem, we follow both the $+$ and $-$ part of it. Namely, the $n+1$-st element needs to be conjugate to a diagonal matrix by the same group elements. This means that if $C_n(M)$ denotes the variety of $n$-tuples of commuting elements of $M$-type, then $C_{n+1}(M)=\left(C_n(M)\times_C \left(C\times \left\{\left(\begin{array}{cc}\lambda & 0\\0 & \mu\end{array}\right)|\lambda\mu\ne 0\right\}\right)\right)/(\ZZ/2\ZZ)$. 
\end{itemize}

Now, we focus on computing $C_{n+1}(M)$ using the recursion 
\[C_{n+1}(M)=\left(C_n(M)\times_C \left(C\times \left\{\left(\begin{array}{cc}\lambda & 0\\0 & \mu\end{array}\right)|\lambda\mu\ne 0\right\}\right)\right)/(\ZZ/2\ZZ).\]




The discussion above implies that the branching matrix of $\GL_2(k)$ is of the following form.

\begin{theorem}\label{thm:branchinggl2}
The branching matrix of $\GL_2(k)$ in the basis given by $[D]$, $[J]$, $[C]$ and $[E]$ is as follows
\[A=\left(\begin{array}{cccc}q-1 & 0 & 0 &0\\ (q-1)(q^2-1) & q(q-1) & 0 & 0\\ (q-1)^2 & 0 & q(q-1) & 1-q\\ 1-q & 0 & 1-q & q(q-1)\end{array}\right).\]
\end{theorem}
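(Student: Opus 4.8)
The plan is to read off the entries of $A$ one at a time from the analysis, carried out just above, of how the centralizer of a commuting tuple in $\GL_2(k)$ changes when one more commuting element is adjoined; the four rows and columns correspond to the four centralizer–configuration types, which are precisely the basis elements $[D],[J],[C],[X]$. So the first step is to make this list precise: $[D]$ is the locus of tuples all of whose entries are scalar (centralizer $=\GL_2(k)$), $[J]$ is the locus of tuples conjugate into $J$ with at least one non‑scalar entry (centralizer a conjugate of $J$, parametrized by $\GL_2(k)/J$), and the $M$‑type locus — tuples conjugate into the diagonal torus $D$ with at least two distinct eigenvalue characters — must, because of the residual $\ZZ/2\ZZ$ exchanging the two eigenlines, be tracked with two coordinates: a coarse part over $C$ and a part over the double cover $X=\GL_2(k)/D$, which is what $[C]$ and $[X]$ record.

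Next I would go through the four columns. Column $[J]$ is the easiest: it was shown above that adjoining a commuting element to a $J$‑type tuple cannot change the centralizer and that there are $[J]=q(q-1)$ choices, so $A_{JJ}=q(q-1)$ and every other entry of that column is $0$. Columns $[C]$ and $[X]$: the centralizer of an $M$‑type tuple is likewise unchanged, the branching factor being $\overline M=(C\times\{\mathrm{diag}(\lambda,\mu):\lambda\mu\neq 0\})/(\ZZ/2\ZZ)$, whose class over $C$ equals $[C](q^2-1)+[X](1-q)$; expanding the fibre product $(v^C[C]+v^X[X])\times_C\big([C](q^2-1)+[X](1-q)\big)$ with $[C]\times_C[C]=[C]$, $[C]\times_C[X]=[X]$, and the relation $[X\times_C X]=2[X]$ established above gives the $[C]$‑column $(0,0,q^2-1,1-q)^{\mathsf T}$ and the $[X]$‑column $(0,0,0,(q-1)^2)^{\mathsf T}$, since $q^2-1+2(1-q)=(q-1)^2$. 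Column $[D]$: adjoining a scalar keeps us in $[D]$ (contributing $[\GG_m]=q-1$), a $J$‑type element moves us to $[J]$, and an $M$‑type element moves us into the $M$‑stratum; for the last, Lemma~\ref{le:MoverC} writes $[M]$ over $C$ as $[C](q^2-q)+[X](1-q)$, and multiplying the whole column by the factor $q-1$ fixed by the normalization of the $[D]$‑coordinate (the same $q-1$ that produces $A_{DD}=q-1$) yields $(q-1,\,q-1,\,(q^2-q)(q-1),\,(1-q)(q-1))^{\mathsf T}$. Assembling the four columns gives the stated matrix.

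The hard part will not be any single computation but keeping two levels of bookkeeping straight at once: absolute classes versus classes relative to the base attached to each state (a point for $[D]$, $\GL_2(k)/J$ for $[J]$, and $C$ for the $M$‑states), together with the correct handling of the non‑split $\ZZ/2\ZZ$‑quotient. This is exactly what forces the $M$‑stratum to occupy two coordinates and what produces the factors of $q-1$ that a naive point count over $\overline{\FF}_q$ would not predict (in particular the uniform $q-1$ in the first column). Since all the substantive inputs — Lemma~\ref{le:MoverC}, the class of $\overline M$, the identity $[X\times_C X]=2[X]$, and the fact that $[\AA^n]^+=[\AA^n]$ for a linear action — are already in place, the remaining work is purely organizational.
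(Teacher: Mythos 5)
Your proposal is correct and follows essentially the same route as the paper, whose ``proof'' of this theorem is precisely the preceding discussion: the stratification by centralizer type, the $q(q-1)$ count for the $J$-state, Lemma~\ref{le:MoverC} and the class of $\overline{M}$ over $C$ together with $[X\times_C X]=2[X]$ for the two $M$-coordinates, and the resulting column-by-column assembly (indeed $q^2-1+2(1-q)=(q-1)^2$ as you note). The only caveat is cosmetic: the uniform factor $q-1$ in the first column is the class $[D]$ entering through the normalization of the $D$-coordinate (with $A_{DD}=q-1$ arising as the class of new scalar choices), but your bookkeeping reproduces exactly the entries and conventions implicit in the paper's $wA^{n-1}v^{T}$ formula.
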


\begin{proof}
    The first columns shows how the $S$-type centralizers can change to different centralizers. The second row shows that $J$-type centralizers can only change to $J$-type centralizers. The two final columns shows how the $+$ and $-$ parts of $C_{n}(M)$ change using the e) and the f) part of Lemma \ref{lem:pmgl2classes}. 
\end{proof}

Since $[M]=(q-1)^2[C]+(1-q)[E]$ and the virtual classes are $[D]=q-1$, $[J]=(q-1)(q^2-1)$, $[C]=q^2$, $[E]=q$, we have the following.

\begin{theorem}\label{thm:commntuplesgl2}
The virtual class of the $n$-tuples of commuting elements in $\GL_2(k)$ is given by
\[vA^{n-1}w^T\]
where $w=(q-1, (q-1)(q^2-1), (q-1)^2, 1-q)$ and $v=(1,1,q^2, q)$.
\end{theorem}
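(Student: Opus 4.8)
The plan is to decode what the branching matrix and the vectors $w$ and $v$ are recording, and then verify that $wA^{n-1}v^T$ reproduces the stratified count of commuting $n$-tuples. The four basis elements $[D], [J], [C], [X]$ correspond to the four types of ``centralizer data'' a partial commuting tuple can carry: the scalar case (centralizer is all of $\GL_2$, base class $[D]=q-1$ for the scalar subvariety), the $J$-type case (centralizer conjugate to $J$, parametrized by $\GL_2/J$, with the relevant fiber class $[J]=(q-1)(q^2-1)$), and the $M$-type case, which because of the residual $\ZZ/2\ZZ$-action splits into the two pieces $[C]=(\GL_2/D)^+=q^2$ and $[X]=\GL_2/D$ with $[X]=q^2+q$. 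First I would make precise that a commuting $n$-tuple is stratified by the isomorphism type of the centralizer of its \emph{first} coordinate's ``history'': once a non-scalar element appears, the centralizer shrinks to a $J$-type or $M$-type subgroup and stays there (this is exactly the content of the three bullet points preceding the statement, which I may assume).

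Next I would set up the recursion vectorially. Let $c_n = (c_n^D, c_n^J, c_n^C, c_n^X)$ record the virtual classes of the four strata of $C_n(\GL_2)$, organized so that the $M$-type stratum is expressed over $C$ as a combination of $[C]$ and $[X]$ via Lemma \ref{le:MoverC}. The three bullet points tell us precisely how each stratum of $C_n$ contributes to each stratum of $C_{n+1}$ upon appending a commuting element: from $D$-type one can go to $D$ (append a scalar: $q-1$ choices), to $J$ (append a $J$-type element, contributing the class $[J]$, but recorded in the new basis as the row entries $(q-1)$ in the $J$-column after accounting for the $\GL_2/J$ parametrization), or to $M$ (append an $M$-type element, contributing $[M]=(q^2-q)[C]+(1-q)[X]$, which is exactly the third and fourth entries of the first column); from $J$-type one must stay $J$-type with $q(q-1)$ choices (the $(2,2)$ entry); from $M$-type one must stay $M$-type, and the fiber product over $C$ with $\overline{M}=[C](q^2-1)+[X](1-q)$ combined with the identity $[X\times_C X]=2[X]$ produces the $2\times 2$ block $\left(\begin{smallmatrix} q^2-1 & 0 \\ 1-q & (q-1)^2\end{smallmatrix}\right)$ in the $C,X$ rows and columns. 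Writing these transitions as a matrix gives exactly $A$, so that $c_{n+1}^T = A\, c_n^T$, hence $c_n^T = A^{n-1} c_1^T$ where $c_1$ records the stratification of $\GL_2$ itself: $c_1 = (q-1,\, (q-1)(q^2-1),\, q^2-q,\, 1-q)$, i.e. $c_1 = v$ after reading $[M]$ in the $[C],[X]$ basis.

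Then the total class $[C_n(\GL_2)]$ is obtained by pairing $c_n$ against the vector of ``actual'' virtual classes of the four strata: the $D$-stratum contributes its own class, the $J$-stratum must be multiplied by $[\GL_2/J]$-type data giving weight $(q-1)(q^2-1)$, and the $C$- and $X$-strata (being fibered over $C$) are multiplied by $[C]=q^2$ and $[X]=q^2+q$ respectively. This is precisely $w=(q-1,(q-1)(q^2-1),q^2,q^2+q)$, so $[C_n(\GL_2)] = w\, c_n^T = w A^{n-1} v^T$, which is the claim. I would close by checking the base case $n=1$ directly: $w v^T = (q-1)^2 + (q-1)^2(q^2-1) + q^2(q^2-q) + (q^2+q)(1-q)$, which should collapse to $[\GL_2]=q(q-1)(q^2-1)$ using $[M]=(q^2-q)(q^2-q-1)$ and $[D]+[J]+[M]=[\GL_2]$.

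The main obstacle is bookkeeping rather than conceptual: one must be scrupulous about the distinction between a stratum's intrinsic class and the ``transition weight'' attached when appending an element, and about the two roles the $M$-type data plays (once as the base $C$ over which things are fibered, once as the $\ZZ/2\ZZ$-quotient $\overline{M}$ being appended). In particular the off-diagonal-looking entries of $A$ in the first column are \emph{not} transition counts but the coordinates of $[M]$ and of the $D\to J$ jump in the chosen basis, and it is easy to misplace a factor of $q-1$ between the ``number of scalar choices'' normalization and the ``$\GL_2/J$ parametrization'' normalization. The identity $[X\times_C X]=2[X]$ (freeness of the $\ZZ/2\ZZ$-action on $X$, already recorded in the excerpt) is what makes the $M$-type block genuinely linear in the basis $\{[C],[X]\}$ rather than requiring new classes at each step, so verifying that the fiber product $C_n(M)\times_C \overline{M}$ stays inside the span of $[C]$ and $[X]$ is the one place where a small argument, not just bookkeeping, is needed.
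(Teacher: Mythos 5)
Your overall route is the paper's: stratify commuting $n$-tuples by the conjugacy class of their centralizer, record the $M$-type stratum over $C$ in the basis $\{[C],[X]\}$ via Lemma \ref{le:MoverC} and $[X\times_C X]=2[X]$, run the linear recursion encoded by the branching matrix, and pair with a weight vector. However, your bookkeeping, as written, is inconsistent with the statement and would fail at exactly the check you propose. The vector on which $A$ acts must consist of \emph{coefficients} relative to the basis whose classes are the entries of $w=([D],[J],[C],[X])=(q-1,\,(q-1)(q^2-1),\,q^2,\,q^2+q)$; the initial vector is then $v=(1,1,q^2-q,1-q)$ precisely because $[\GL_2]=1\cdot[D]+1\cdot[J]+(q^2-q)[C]+(1-q)[X]$. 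Your $c_1=(q-1,\,(q-1)(q^2-1),\,q^2-q,\,1-q)$ mixes classes (first two slots) with coefficients (last two), so $c_1\neq v$, and your base-case identity is numerically false: the sum you display, $(q-1)^2+(q-1)^2(q^2-1)+q^2(q^2-q)+(q^2+q)(1-q)$, equals $2q^4-4q^3+q^2+q$, which is not $[\GL_2]$. With the actual $v$ of the statement one gets $wv^T=(q-1)+(q-1)(q^2-1)+q^2(q^2-q)+(q^2+q)(1-q)=q^4-q^3-q^2+q=q(q-1)(q^2-1)$, as required.

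The same confusion appears in your reading of the first column of $A$: the entries $(q^2-q)(q-1)$ and $(1-q)(q-1)$ are \emph{not} ``exactly'' the coordinates $(q^2-q,\,1-q)$ of $[M]$; they carry an extra factor $w_1=q-1$, because the $D$-slot of the state vector holds the coefficient $(q-1)^{n-1}$, which must be converted back to the class $(q-1)^n$ of the scalar stratum before multiplying by the class of the appended stratum. Likewise $A_{21}=q-1$ is this conversion factor, the full class $(q-1)(q^2-1)$ of the appended $J$-type elements being supplied by $w_2$, not by a ``$\GL_2/J$ parametrization'' correction. Once you adopt the uniform convention (coefficients in the state vector, classes in $w$), your analysis of the $J$-block (entry $q(q-1)$) and of the $(C,X)$-block (entries $q^2-1$, $1-q$, $(q-1)^2$, obtained from $\overline{M}=(q^2-1)[C]+(1-q)[X]$ together with $[X\times_C X]=2[X]$) is correct and the argument does yield $[C_n(\GL_2)]=wA^{n-1}v^T$; as written, though, the normalizations do not cohere, so the verification you outline would not go through without this repair.
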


This allows us to compute the classes of the $n$-tuples of commuting elements explicitly after diagonalizing the matrix $A$ which generalizes the result of \cite{sharma20}.



\begin{theorem}
    \label{thm:classofcommuting}
    We have that the virtual class of the $n$-tuples of commuting elements in $\GL_2(k)$ is
    \[\frac{1}{2}q(q^2 - 1)\left(2(q^2 - q)^{n-1} - 2(q - 1)^{n-1} + (q-1)(q^2 - 1)^{n-1} + (q - 1)^{2n-1}\right).\]
    In particular, the variety of $n$-tuples of commuting elements is of dimension $2n+2$.
\end{theorem}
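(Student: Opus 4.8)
The plan is to diagonalize the branching matrix $A$ over the rational function field $\QQ(q)$ and substitute into the formula $[C_n(\GL_2(k))] = wA^{n-1}v^T$ furnished by the preceding theorem, where $w = (q-1,\,(q-1)(q^2-1),\,q^2,\,q^2+q)$ and $v = (1,\,1,\,q^2-q,\,1-q)$.

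First I would note that $A$ is lower triangular with diagonal entries $\lambda_1 = q-1$, $\lambda_2 = q(q-1)$, $\lambda_3 = q^2-1$, $\lambda_4 = (q-1)^2$, which are pairwise distinct as elements of $\QQ[q]$; hence $A$ is diagonalizable over $\QQ(q)$. The sparsity of $A$ makes the eigenvectors easy to compute by back-substitution: one gets $u_2 = e_2$ and $u_4 = e_4$ at once, $u_3 = (0,0,2,-1)^T$ for $\lambda_3$, and $u_1 = (q-1,-1,-(q-1)^2,0)^T$ for $\lambda_1$. Next I would expand $v^T = \sum_i c_i u_i$ by solving the resulting triangular system one coordinate at a time, obtaining $c_1 = \tfrac{1}{q-1}$, $c_2 = \tfrac{q}{q-1}$, $c_3 = \tfrac{q^2-1}{2}$, $c_4 = \tfrac{(q-1)^2}{2}$, so that $A^{n-1}v^T = \sum_i c_i\lambda_i^{n-1}u_i$. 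Pairing with $w$ and simplifying the four scalars $w\cdot u_i$ (each a one-line polynomial identity, e.g.\ $w\cdot u_1 = -q(q+1)(q-1)^2$) gives $[C_n(\GL_2(k))]$ as a $\QQ(q)$-combination of the $\lambda_i^{n-1}$; collecting $\tfrac12 q(q^2-1)$ as a common factor makes every denominator cancel and yields exactly the asserted expression, since $(q-1)\lambda_4^{n-1} = (q-1)^{2n-1}$.

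For the dimension claim I would simply compare degrees in $q$: inside the parentheses the two summands of largest degree are $(q-1)(q^2-1)^{n-1}$ and $(q-1)^{2n-1}$, both of degree $2n-1$ and leading coefficient $1$, so after multiplying by $\tfrac12 q(q^2-1)$ the virtual class has leading term $q^{2n+2}$, and hence $\dim C_n(\GL_2(k)) = 2n+2$. There is no deep obstacle here; the only delicate point is the bookkeeping in the eigenvector expansion and the cancellation of the factors $q-1$ and $2$ from the denominators, which I would guard against by checking the formula at $n=1$, where it must collapse to $[\GL_2(k)] = q(q-1)(q^2-1)$.
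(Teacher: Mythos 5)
Your proposal is correct and takes essentially the same route as the paper, which likewise obtains the formula by diagonalizing the lower-triangular branching matrix $A$ and evaluating $wA^{n-1}v^{T}$; your eigenvectors, the coefficients $c_{1},\dots,c_{4}$, and the pairings $w\cdot u_{i}$ all check out and recombine exactly to the stated expression (with the $n=1$ sanity check giving $q(q-1)(q^{2}-1)=[\GL_2(k)]$). The degree count for the dimension claim is also as in the paper, the two top-degree terms $(q-1)(q^{2}-1)^{n-1}$ and $(q-1)^{2n-1}$ combining with the prefactor $\tfrac12 q(q^{2}-1)$ to give leading term $q^{2n+2}$.
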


\textbf{The variety of commuting $n$-tuples of $\GL_3(k)$}

We start by describing the subvariety of elements of different Jordan forms.
\begin{itemize}
    \item The scalar matrices, $S$: the closed subvariety of scalar matrices has virtual class $q-1$.
    \item A Jordan block and an eigenvector with a different eigenvalue, $J_1$: invertible matrices that are conjugate to a matrix of the form
    \[\left(\begin{array}{ccc} \lambda  & 1 &0 \\ 0 & \lambda & 0\\ 0 & 0 &\mu\end{array}\right), \lambda\ne \mu.\]
    In this case, the centralizer subgroup, $C_1$, is given by the subgroup of matrices of the form
    \[\left(\begin{array}{ccc} x  & y &0 \\ 0 & x & 0\\ 0 & 0 &z\end{array}\right).\]
    We can identify $J_1$ with the variety
    \[\GL_3(k)/C_1\times \left((\AA^1\setminus\{0\})\times (\AA^1\setminus \{0\})\setminus (\AA^1\setminus \{0\})\right)\cong J_1\]
    sending
    \[(P, \lambda, \mu)\mapsto P\left(\begin{array}{ccc} \lambda  & 1 &0 \\ 0 & \lambda & 0\\ 0 & 0 &\mu\end{array}\right)P^{-1}.\]
    Therefore
    \[[J_1]=(q-1)(q-2)[\GL_3(k)]/[C_1]=q^2(q^2-1)(q^3-1)(q-2).\]
    \item A small Jordan block, $J_2$: invertible matrices that are conjugate to a matrix of the form
    \[\left(\begin{array}{ccc} \lambda  & 1 &0 \\ 0 & \lambda & 0\\ 0 & 0 &\lambda\end{array}\right).\]
    In this case, the centralizer subgroup, $C_2$, is given by the subgroup of matrices
    \[\left(\begin{array}{ccc} u  & v &x \\ 0 & u & 0\\ 0 & z &y\end{array}\right)\]
    with $u, y\ne 0$. With a similar argument as above, we obtain that
    \[[J_2]=(q-1)[\GL_3(k)/C_2]=(q-1)(q^3-1)(q+1).\]
    \item The full Jordan block, $J_3$: invertible matrices that are conjugate to a matrix of the form
    \[\left(\begin{array}{ccc} \lambda  & 1 &0 \\ 0 & \lambda & 1\\ 0 & 0 &\lambda\end{array}\right).\]
    The centralizer of such element is the subgroup $C_3$ of the matrices
    \[\left(\begin{array}{ccc} x  & y &z \\ 0 & x & y\\ 0 & 0 &x\end{array}\right).\]
    Therefore, using the argument above, we have
    \[[J_3]=(q-1)[\GL_3(k)]/[C_3]=(q-1)(q^3-1)(q^3-q).\]
    \item Matrices with double eigenvalue, $M_1$: invertible matrices that are conjugate to a matrix of the form
    \[\left(\begin{array}{ccc} \lambda  & 0 &0 \\ 0 & \lambda & 0\\ 0 & 0 &\mu\end{array}\right), \lambda\ne \mu.\]
    In this case, the centralizer subgroup, $C_4$, is given by the subgroup of invertible matrices
    \[\left(\begin{array}{ccc} x  & y &0 \\ z & u & 0\\ 0 & 0 &v\end{array}\right).\] 
    This implies that
    \[[M_1]=(q-1)(q-2)[\GL_3(k)/C_4]=q^2(q-2)(q^3-1).\]
    \item Finally, the matrices with three distinct eigenvalues, $M_3$: invertible matrices that are conjugate to a matrix of the form
    \[\left(\begin{array}{ccc} \lambda  & 0 &0 \\ 0 & \mu & 0\\ 0 & 0 &\nu\end{array}\right), \lambda\ne \mu, \lambda\ne \nu, \mu\ne \nu.\]
    The centralizer subgroup of such element is given by the subgroup of diagonal matrices. The virtual class of $M_3$ is not $(q-1)(q-2)(q-3)[\GL_3(k)/D)]$ as the subgroup $S_3$ acts on the eigenspaces. In other words, there is no given order of $\lambda$, $\mu$ and $\nu$. 
\end{itemize}

We continue by describing $M_3$ via the Motivic Decomposition Theorem and the motivic K\"unneth-formula for $S_3$. As in Example \ref{ex:goods_nsubgroups}, we choose the good set of conjugacy classes of subgroups as the trivial subgroup $1$, the whole groups $S_3$ and the subgroup of $S_2\leq S_3$. We denote the representations of $S_3$ as follows: $T$ for the trivial representation, $\Sigma$ for the sign representation and $R$ for the 2-dimensional representation. Using these notations, we have the following statements that are statements similar to the ones in Lemma \ref{le:MoverC}.

\begin{lemma}
    We have the following motivic decompositions.
    \begin{itemize}
        \item Consider cosets of $[3\times 3]$ invertible matrices with respect to diagonal matrices, $\GL_3(k)/D$, with the $S_3$-action that permutes the columns of the matrices. Then, 
        \[[\GL_3(k)/D]_{S_3}=q^6T+q^3\Sigma+(q^5+q^4)R.\]
        \item Consider the triples of elements of $\GG_m$ with the $S_3$-action of permuting them. Then,
        \[[\GG_m^3]_{S_3}=(q^3-q^2)T+(q-1)\Sigma-q(q-1)R.\]
        \item Consider variety, $\Conf_3(\GG_m)$, of the triple of non-equal elements of $\GG_m$ with the $S_3$-action of permuting them. Then, 
        \[[\Conf_3(\GG_m)]_{S_3}=(q-1)\left((q^2-q+1)T+\Sigma-2(q-1)R\right).\]
    \end{itemize}
\end{lemma}

\begin{proof}
    The proof is very similar to the proof of Lemma \ref{lem:pmgl2classes}. We use the notation of Example \ref{ex:mots3decomp}. In order to describe the motivic decomposition
    \[[X]_{S_3}=aT+b\Sigma+cR\]
    of an $S_3$-variety $X$, we need to compute the classes of the quotients $[X]$, $[X/S_2]$ and $[X/S_3]$. In fact, we show in Example \ref{ex:mots3decomp} that
    \[a=[X/S_3],\quad b=[X]-2[X/S_2]+[X/S_3],\quad c=[X/S_2]-[X/S_3].\]


    In the case of $(\GG_m)^3$, we use Kapranov's zeta function (see Examples 1.3.2 b) with $l=[\AA^1]$ of \cite{kapranov2000elliptic}) to obtain that $[(\GG_m)^3/S_3]=q^3-q^2$. Similarly, we can use Kapranov's zeta function (again see Examples 1.3.2 b) with $l=[\AA^1]$ of \cite{kapranov2000elliptic}) and the K\"unneth-formula for motivic decompositions to get that $[(\GG_m)^2/S_2\times \GG_m]=q(q-1)^2$. Using Example \ref{ex:mots3decomp}, we obtain that
    \[[\GG_m^3]_{S_3}=(q^3-q^2)T+(q-1)\Sigma-q(q-1)R.\]
    
    In the case of $\Conf_3(\GG_m)$, we can stratify $\Conf_3(\GG_m)$ as $(\GG_m)^3\setminus Z$ where $Z$ is the subvariety where at least 2 points coincide. The subvariety $Z$ can be further stratify: exactly two points coincide or all three points coincide. This yields 
    \[[\Conf_3(\GG_m)/S_3]=q^2(q-1)-(q-1)(q-2)-(q-1)=(q-1)(q^2-q+1).\]
    Similar computation can be done to show that
    \[[\Conf_3(\GG_m)/S_2]=(q-1)(q^2-3q+3).\]
    Using Example \ref{ex:mots3decomp}, we obtain that
    \[[\Conf_3(\GG_m)]_{S_3}=(q-1)((q^2-q+1)T+\Sigma-2(q-1)R).\]

    Finally, in the case of $\GL_3/D$, we use the same trick as in Lemma \ref{le:MoverC}. We identify $\GL_3/D$ with the open subvariety of  $(\PP^2)^3$ (three points in $\PP^2$) where the three points are non-collinear. Thus, 
    \[[\GL_3/D/S_3]=[(\PP^2)^3/S_3]-[\PP^2]([(\PP^1)^3/S_3]-[\PP^1])+[\PP^2]\]
    where the first term computes the class of unordered triples in $\PP^2$, the second term computes the class of triples where two points coincide (in that case, the two points determine a unique line) and finally, the third term computes the class of triples where all points coincide. Using this stratification, we get that
    \[[\GL_3/D/S_3]=q^6.\]
    For $\GL_3/D/S_2$ (where $S_2$ permutes the first two columns), one can consider the fibration $\GL_3/D/S_2\to \Conf_2(\PP^2)$ that sends a triple of points to the configuration space of two non-equal points of $\PP^2$. This gives an $\AA^2$-fibration, and thus
    \[[\GL_3/D/S_2]=([(\PP^2)^2/S_2]-[\PP^2])[\AA^2]=q^6+q^5+q^4.\]
    Using Example \ref{ex:mots3decomp}, we obtain
    \[[\GL_3(k)/D]_{S_3}=q^6T+q^3\Sigma+(q^5+q^4)R\]
    finishing the proof.
\end{proof}

\begin{remark}
    In particular, we can compute the class of $M_2$ by taking the coefficient of $T$ in the product $[\Conf_3(\GG_m)]_{S_3}[\GL_3(k)/D]_{S_3}$ that is given as
    \[[M_2]=(q-1)(q^6(q^2-q+1)+q^3-2(q^5+q^4)(q-1))=(q^3-1)(q^6-2q^5+2q^3-q-1)-(q-1).\]
    We note that we could have obtained the virtual class of $M_2$ from the other classes:
    \[[M_2]=[\GL_3(k)]-[S]-[J_1]-[J_2]-[J_3]-[M_1]=\]
    \[=(q^3-1)\left((q^3-q)(q^3-q^2)-q^2(q^2-1)(q-2)-(q^2-1)-(q-1)(q^3-q)-q^2(q-2)\right)-(q-1)\]
    that agrees with the computation from the motivic decomposition.
\end{remark}

In the case of $\GL_3$, unline in the case of $\GL_2$, the centralizer subgroup of an Abelian group is not necessarily the centralizer of a single element. In fact, the group generated by the elements
\[\left(\begin{array}{ccc} 1 & 1 & 0\\ 0 & 1 & 0\\ 0& 0&1\end{array}\right) \mbox{ and } \left(\begin{array}{ccc} 1 & 0 & 1\\ 0 & 1 & 0\\ 0& 0&1\end{array}\right)\]
gives such an example and all examples are conjugate to this. We call such Abelian groups type $N$. Thus, we have a branching of centralizers as in the following diagram
\[
\begin{tikzcd}
    & & S\ar[rd]\ar[ld] & &\\
    & M_1\ar[ld]\ar[rd] & & J_2\ar[d]\ar[ld]\ar[rd] &\\
    M_2 & & J_1 & J_3 & N
\end{tikzcd}
\]
where the arrow denotes a possibly change in the centralizer group by adding one more commuting element.

Explicitly, we have the following theorem where we denote by $C_n(A)$ the variety of commuting $n$-tuples of type $A$. 

\begin{theorem}\label{thm:gl3recursion}
    We have the following recursion.
    \begin{itemize}
        \item $[C_{n+1}(S)]=[C_n(S)](q-1)$,
        \item $[C_{n+1}(M_1)]=[C_n(S)][M_1]+[C_n(M_1)](q-1)^2$,
        \item $[C_{n+1}(J_2)]=[C_n(S)][J_2]+[C_n(J_2)]q(q-1)$,
        \item $[C_{n+1}(J_1)]=[C_n(S)][J_1]+[C_n(J_2)](q-1)(q-2)+[C_n(M_1)](q-1)^2(q^2-1)+[C_n(J_1)]q(q-1)^2$,
        \item $[C_{n+1}(J_3)]=[C_n(S)][J_3]+[C_n(J_2)]q(q-1)^3+[C_n(J_3)]q^2(q-1)$,
        \item $[C_{n+1}(N)]=[C_n(J_2)](q-1)^2q+[C_n(N)]q^2(q-1).$
    \end{itemize}
\end{theorem}

\begin{proof}
    The statement follows from the explicit understanding on how the centralizers can change.
    
    The scalar matrices commute with all matrices. 
    
    The $M_1$-type matrices commute with matrices that are in a block form made from a $[2\times 2]$ block and a $[1\times 1]$ block. If the $[2\times 2]$ block is a scalar, then the centralizer does not change. If the $[2\times 2]$ block is a Jordan type matrix, then the centralizer changes to a $J_1$-type centralizer, and finally, if the $[2\times 2]$ block has two different eigenvalues, then the centralizer changes to an $M_2$-type centralizer.

    The $J_1$-type matrices commute with matrices that are in a block form made from a $[2\times 2]$-block with the same Jordan type (or scalar) and a $[1\times 1]$-block. Therefore, the centralizer can not change.

    The $J_2$-type matrices commute with different kind of matrices. If the eigenvalues of the matrix are different, then the centralizer changes to a $J_1$-type centralizer. If the eigenvalues are the same ($u=y$ with the notation of the description of the $J_2$-type matrices), we have three cases $xz\ne 0$, then the centralizer changes to a $J_3$-type, if $x=z=0$, then the centralizer does not change, otherwise, we get the $N$-type centralizer.

    The $J_3$-type matrices commute with matrices that are generated from the matrix (as a $k$-algebra) and thus the centralizer can not change.

    Finally, the matrices that commute with an $N$-type subgroup are in the $k$-subalgebra generated by the elements of the $N$-type subgroup, hence the centralizer of an $N$-type subgroup cannot change.
\end{proof}

Finally, we describe $C_n(M_2)$. We can get an $M_2$-type centralizer from an $S$-type, an $M_1$-type or an $M_2$-type centralizer. 
\begin{itemize}
    \item The $S$-type centralizer is the whole group.
    \item The $M_1$-type centralizer turns into an $M_2$-type centralizer if the $[2\times 2]$ block has two different eigenvalues. This means that, in this case, we have the $M$-type centralizer for the $[2\times 2]$ block (and an element for the other block). We call this $M_2'$-type.
    \item The $M_2$-type centralizer remains an $M_2$-type centralizer for any commuting matrix. We call this $M_2''$-type.
\end{itemize}

This discussion leads to the following theorem which is the $\GL_3$-version of Theorem \ref{thm:branchinggl2}.

\begin{theorem}\label{thm:branchinggl3}
    The branching matrix of $\GL_3(k)$ in the basis given by $[S]$, $[J_2]$, $[J_3]$, $[M_1]$, $[J_1]$, $[N]$; $[C\times 1]$ and $[E\times 1]$ (for $M_2'$-type), $[q^6]T$, $[q^3]\Sigma]$ and $(q^5+q^4)R$ (for the $M_2''$-type) is as follows
\[
\scalebox{0.6}{$A=\left(\begin{array}{ccccccccccc}q-1 & 0 & 0 &0 & 0& 0& 0&0&0&0&0 \\ 
(q-1)(q^3-1)(q+1) & q(q-1) & 0 & 0 &0 & 0& 0& 0&0&0&0\\
(q-1)(q^3-1)(q^3-q) & q(q-1)^3 & q^2(q-1) & 0 &0 & 0& 0& 0&0&0&0\\ 
q^2(q-2)(q^3-1) & 0 & 0 & (q-1)^2 &0 & 0& 0& 0&0&0&0\\
q^2(q^2-1)(q^3-1)(q-2) & (q-1)(q-2) & 0 & (q-1)^2(q^2-1) &q(q-1)^2 & 0& 0& 0&0&0&0\\
0 & 0 &0 &0 &q(q-1)^2 & q^2(q-1)& 0& 0&0&0&0\\
0 & 0 &0 &(q-1)^3 &0 & 0& q(q-1)^2& -(q-1)^2&0&0&0\\
0 & 0 &0 &-(q-1)^2 &0 & 0& -(q-1)^2& q(q-1)^2&0&0&0\\
(q-1)(q^2-q+1)& 0 &0 &0 &0 & 0& 0& 0& q^3-q^2 & q-1 & -q(q-1) \\
q-1& 0 &0 &0 &0 & 0& 0& 0& q-1 & q^3-q^2&-q(q-1)\\
-2(q-1)^2& 0 &0 &0 &0 & 0& 0& 0&-q(q-1)& -q(q-1) & q^3-2q^2+2q-1\\
\end{array}\right).$}\]
The virtual class of the variety of commuting $n$-tuples in $\GL_3(k)$ is given as
\[[C_n(\GL_3(k))]=vA^{n-1}w^T\]
where $v=\left(1,1,1,1,1,1,q^2, q, q^6, q^3, q^5+q^4\right)$ and 
\begin{align*}
    w=&( q-1, (q^3 - 1)(q - 1)(q + 1), (q^3 - q)(q^3 - 1)(q - 1), \\  &q^2(q^3 - 1)(q - 2), q^2(q^2 - 1)(q^3 - 1)(q - 2), 0, 0, 0, (q - 1)(q^2 - q + 1), q - 1, -2(q - 1)^2).
\end{align*}
\end{theorem}

The matrix $A$ can be diagonalized with eigenvalues: $q-1$, $q^2-q$, $q^3-1$, $q^3-q^2$ (with multiplicity 2), $(q-1)^2$, $q(q-1)^2$, $(q-1)(q^2-1)$ (with multiplicity 2) and $(q-1)^3$ (with multiplicity 2). Diagonalizing the matrix, we obtain an explicit formula.

\begin{theorem}\label{thm:commntuplegl3}
    The virtual class of the variety of commuting $n$-tuples of $\GL_3(k)$ is given by 
    \[[C_n(\GL_3(k))]=-(q-2)(q^3+2q^2+2q+1)(q^2-q)^n+\frac{1}{3}q^3(q-1)^2(q+1)(q^3-1)^n+\]
\[+\left((q+1)(q^3-1)+\frac{q^9-2q^8-q^6+3q^5-2q^4+q^3-2q^2+q-2}{q(q^2-q+1)}\right)(q^3-q^2)^n+\]
\[+\frac{q^7 - q^6 -2q^5 - q^4 + 4q^3 + 3q^2 + q - 2}{q^2 - q -1}(q-1)^n-\frac{q^3(q^3 - q^2 + 2)(q^2+q+1)}{q^2 - q + 1}(q-1)^{2n}+\]
\[-\frac{(q-2)(q^9-q^8-2q^7-q^6+2q^5+2q^4-q^3-q^2+1)}{q(q-1)(q^2-q-1)}(q(q-1)^2)^n+\frac{q^6-q^3}{2}((q-1)(q^2-1))^{n}+\]
\[+\frac{q^3(q^3 + 2q^2 + 2q + 1)}{6}(q-1)^3.\]
\end{theorem}

\begin{remark}
    As an easy consequence, we get that the dimension of $C_n(\GL_3(k))$ is $3n+6$.
\end{remark}

\begin{remark}
    The computations similar to the computations done above for $\GL_2$ and $\GL_3$ can be performed, in principle, for any group with finitely many isomorphism classes of centralizers, thus, for instance, for reductive groups \cite{garge2020finiteness}. However, the computation may be much more involved.
\end{remark}

\subsubsection{Motivic representation stability of commuting $n$-tuples }

Although it is clear from Theorem \ref{thm:classofcommuting} that the varieties of commuting $n$-tuples do not satisfy motivic stability, it motivates our conjecture that these varieties satisfy motivic representation stability. In fact, the failure of the motivic stability is a consequence of the non-trivial contribution of the non-trivial representations in the motivic representation stability.

\textbf{The case of $\GL_2$}: Before explaining motivic representation stability for a general linear reductive algebraic group, we illustrate the general proof in the case of $\GL_2$. In the previous section, we stratified the variety of commuting triples into three pieces according to the corresponding centralizer groups:
\[S^n=C_n(D)=\{A_1,...,A_n|A_i's\mbox{ are scalars}\}\]
\[C_n(J)=\left\{A_1,...A_n|\mbox{some }A_i \mbox{ is conjugate to }\left(\begin{array}{cc} \lambda & 1\\ 0 & \lambda\end{array}\right)\right\}\]
\[C_n(M)=\left\{A_1,...A_n|\mbox{some }A_i \mbox{ is conjugate to }\left(\begin{array}{cc} \lambda & 0\\ 0 & \mu\end{array}\right)\mu\ne \lambda\right\}\]
By simultaneously conjugating the elements $A_i$, we can express $C_n(J)$ as
\[C_n(J)=\Ind_H^{\GL_2} (J^n\setminus S^n)\]
where $J=\left\{\left(\begin{array}{cc}\mu & b\\0 & \mu\end{array}\right)\right\}$ is the subgroup of upper triangular matrices with equal eigenvalues and $H=J$ is the stabilizer of $J$. Similarly, 
\[C_n(M)=\Ind_K^{\GL_2}(D^n\setminus S^n)\]
where $D$ is the subgroup of diagonal matrices, $K$ being the stabilizer of $D$. Since the dimension of $S^n$ is negligible compared to $D^n$ or $J^n$, it is enough to show that 
\[\Ind_H^{\GL_2} (J^n)\quad\mbox{and}\quad \Ind_K^{\GL_2}(D^n)\]
are motivically representation stable. Since, the action of $S_n$ and $\GL_2$ commute on the commuting $n$-tuples, we have that
\[\Ind_H^{\GL_2} (J^n)/\S_{\lambda[n]}=\Ind_H^{\GL_2} (\Sym_H^{n-|\lambda|}J\times \prod_{i}\Sym_H^{\lambda_i}J)\]
and similarly
\[\Ind_K^{\GL_2} (D^n)/\S_{\lambda[n]}=\Ind_K^{\GL_2} (\Sym_K^{n-|\lambda|}D\times \prod_{i}\Sym_K^{\lambda_i}D).\]
The action of $H$ (and $K$ resp.) are linear on $J$ viewed as an open subvariety of $\AA^2$ (and on $D$ viewed as an open subvariety of $\AA^2$), thus motivic representation stability holds using Corollary \ref{cor:stabsymmetric}.

\textbf{The case of a general linear reductive algebraic group:} The strategy above can be used to show the following general statement.

\begin{theorem}\label{thm:motrepstabgengp}
    Let $G$ be a linear reductive algebraic group over an algebraically closed field $k$. Assume that all the Abelian subgroups of $G$ that are maximal under inclusion are connected.
    Then, the variety of commuting $n$-tuples in $G$ satisfies motivic representation stability.
\end{theorem}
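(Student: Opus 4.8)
The proof follows the same strategy used above for $\GL_2$, now carried out uniformly over the lattice of (conjugacy classes of) centralizers in a reductive group. By the finiteness result for centralizers in reductive groups \cite{garge2020finiteness}, there are only finitely many conjugacy classes of subgroups of $G$ arising as centralizers of commuting tuples; I would organize these into a poset $\mathcal{L}$ ordered by reverse inclusion, with the maximal Abelian subgroups (which by hypothesis are connected tori or, more generally, connected Abelian subgroups) sitting at the top. For each such maximal connected Abelian subgroup $T$, let $N = N_G(T)$ be its stabilizer under conjugation and $W = N/T$ the corresponding (finite) Weyl-type group. The key geometric input is the stratification of $C_n(G)$ according to the centralizer type of the tuple: each stratum where the centralizer is conjugate to a fixed $T$ can be written, by simultaneous conjugation, in the form $\Ind_N^G\bigl(T^n \setminus (\text{locus with larger centralizer})\bigr)$, exactly as in the $\GL_2$ case where the $M$-type stratum became $\Ind_K^{\GL_2}(D^n \setminus S^n)$.

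\textbf{Reduction to symmetric powers.} Since the lower-centralizer loci inside each $T^n$ have strictly smaller dimension, Lemma \ref{lem:openstabilizeallstabilize} lets me replace $T^n \setminus (\cdots)$ by all of $T^n$ for the purposes of motivic stability; so it suffices to prove that $\Ind_N^G(T^n)$ is motivically representation stable for each maximal connected Abelian $T \leq G$ with stabilizer $N$. Because the $S_n$-action (permuting the $n$ copies) commutes with both the conjugation $G$-action and the $N$-action, I get
\[
\Ind_N^G(T^n)/S_{\lambda[n]} \;=\; \Ind_N^G\Bigl(\Sym_N^{n-|\lambda|} T \times \prod_i \Sym_N^{\lambda_i} T\Bigr),
\]
and by Corollary \ref{cor:resindmotivic} the induction map is continuous on the completed rings, so it is enough to show each factor $\Sym_N^{m} T$ stabilizes motivically as an $N$-variety. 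Now $T$ is a connected Abelian subgroup, hence (being an affine connected commutative algebraic group over an algebraically closed field of characteristic $0$) isomorphic to $\GG_m^a \times \GG_a^b$, an open subvariety of affine space $\AA^{a+b}$ on which $N$ acts linearly (the conjugation action of $N$ on $T$ is by algebraic group automorphisms, which are linear in suitable coordinates). Applying Corollary \ref{cor:stabsymmetric} to the linear $N$-action on $\AA^{a+b}$ together with Lemma \ref{lem:openstabilizeallstabilize} (to pass from $\AA^{a+b}$ to the open subvariety $T$) yields the motivic stability of $\Sym_N^m T$.

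\textbf{Assembling the pieces.} Finally I would reassemble: $C_n(G)$ is a finite disjoint union, over conjugacy classes of centralizer types, of strata each of which is of the form $\Ind_N^G(T^n \setminus \cdots)$, plus possibly a lower-dimensional "diagonal" correction coming from tuples whose centralizer is larger still — these are handled inductively since they live inside smaller $T'^n$'s and are of negligible dimension. Motivic representation stability is closed under finite sums in $\widehat{M_\LL}$ (the filtration is compatible with addition), so stability of each stratum gives stability of $[C_n(G)/S_{\lambda[n]}]$ for every partition $\lambda$, which is the claim.

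\textbf{The main obstacle} I anticipate is the bookkeeping in the stratification step: showing cleanly that the centralizer-type stratification of $C_n(G)$ is by locally closed $G\times S_n$-invariant subvarieties, that each stratum is genuinely an induced variety $\Ind_N^G$ of an $N$-stable open subset of $T^n$ (this requires knowing the stabilizer of the stratum is exactly $N_G(T)$, i.e. that a commuting tuple with centralizer exactly $T$ is determined up to the $N_G(T)$-action), and that the connectedness hypothesis on maximal Abelian subgroups is precisely what guarantees $T$ is a smooth rational variety with linearizable $N$-action — without it (as the $\GG_m \rtimes \ZZ/2$ counterexample in Section \ref{sec:surface} suggests) the symmetric powers need not stabilize. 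The identification of centralizers with connected tori and the finiteness of centralizer classes are the technical facts I would cite rather than reprove.
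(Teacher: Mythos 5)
Your stratification of $C_n(G)$ by centralizer type, the finiteness input from \cite{garge2020finiteness}, and the reduction of each stratum to $\Ind_{N_G(A)}^{G}$ of products of symmetric powers of a maximal Abelian subgroup $A$ is exactly the paper's strategy, and the passage from $A^n\setminus(\cdots)$ to $A^n$ via Lemma~\ref{lem:openstabilizeallstabilize} is fine. The divergence, and the gap, is in how you finish. You want to apply Corollary~\ref{cor:stabsymmetric}, and to do so you assert that the conjugation action of $N=N_G(A)$ on $A\cong\GG_m^a\times\GG_a^b$ is ``by algebraic group automorphisms, which are linear in suitable coordinates.'' That claim is false in general: automorphisms of $\GG_m^a$ act through $\GL_a(\ZZ)$ on the character lattice, i.e.\ by monomial transformations of the torus coordinates, and these need not extend to linear maps of the ambient $\AA^a$ (inversion $t\mapsto t^{-1}$ on $\GG_m$ is already not linear on $\AA^1$); even when a maximal torus is realized inside diagonal matrices so that a Weyl-type element permutes eigenvalues linearly, the closure of the torus in the ambient affine space need not be a linear subspace, so Corollary~\ref{cor:stabsymmetric} does not directly apply. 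Linearizability is a genuinely group-specific fact: the paper invokes it (via Schur's theorem on maximal Abelian subgroups of $\GL_r$ being open in linear subspaces of matrices) only in the subsequent corollary about $\GL_r$-character stacks, not in the proof of this theorem.

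The paper's own proof sidesteps linearization entirely: connectedness of the maximal Abelian subgroups is used only to guarantee that $[A]$ is a \emph{monic} polynomial in $\LL$ (since $A\cong\GG_m^a\times\GG_a^b$ has class $\LL^b(\LL-1)^a$), so that $\Sym^m(A)$ stabilizes by Lemma~\ref{lem:motstabformonic}; one then sums the finitely many contributions $\Ind_{N_G(A)}^{G}(A^n)/S_{\lambda[n]}$ and uses continuity of induction (Corollary~\ref{cor:resindmotivic}). Your instinct to establish stability of $\Sym^m_N A$ equivariantly (as $N$-varieties) before inducing is a legitimate concern --- it is what Corollary~\ref{cor:resindmotivic} formally asks for --- but the justification you give for it does not hold for a general reductive $G$ satisfying the hypothesis. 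To match the paper you should replace the linearization step by the monic-polynomial criterion of Lemma~\ref{lem:motstabformonic}, or, if you want the equivariant statement, supply a case-specific linearization argument (as the paper does for $\GL_r$) rather than a general one.
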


\begin{proof}
 In a linear reductive algebraic group over an algebraically closed field $k$ has finitely many centralizers of the form $Z_G(g)$ up to conjugation \cite{garge2020finiteness}. The Abelian subgroups that are maximal under inclusion are finite intersections of centralizers of elements, therefore, we have only finitely many maximal Abelian subgroups under conjugation. 

 Now, using the same idea as in the case of $\GL_2$, we have that 
 \[\lim_{n\to \infty} \frac{[C_n(G)/S_{\lambda[n]}]}{\LL^{\dim C_n(G)}}=\lim_{n\to \infty} \sum_{A\leq G:\mbox{max Abelian}}\frac{[\Ind_{N_G(A)}^{\GL_2}(A^n)/S_{\lambda[n]}]}{\LL^{\dim C_n(G)}}.\]
 This limit exists in $\widehat{M_\LL}$ using Lemma \ref{lem:motstabformonic} and Corollary \ref{cor:resindmotivic}, because the summation is finite.
\end{proof}

We can strengthen the theorem above to deal with the case of character stacks in the case of the general linear group $\GL_r$. In other words, we show that the motivic representation stability holds not just in the Grothendieck ring of varieties, but also in the Grothendieck ring of $G$-varieties.

\begin{corollary}\label{cor:conjC}
    The $\GL_r$-character stacks of commuting $n$-tuples of $\GL_r$ satisfy motivic representation stability, i.e, the sequence $C_n(\GL_r)$. is motivically representation stable in $\K_0(\Var_k^{\GL_r})$.
\end{corollary}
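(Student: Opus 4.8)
The plan is to upgrade Theorem \ref{thm:motrepstabgengp} to the level of $\GL_r$-equivariant stability, i.e.\ stability in $\widehat{M_\LL^{\GL_r}}$ rather than just in $\widehat{M_\LL}$. First I would recall the stratification of $C_n(\GL_r)$ by the conjugacy class of the centralizer of the $n$-tuple, exactly as in the proof of Theorem \ref{thm:motrepstabgengp}: the maximal Abelian subgroups of $\GL_r$ (intersections of centralizers of elements) are all connected (diagonalizable tori up to conjugacy, or more precisely products of $\GL_{d}$-type blocks with cyclic multiplicities), and there are finitely many of them up to conjugacy by \cite{garge2020finiteness}. Writing $A$ for such a maximal Abelian subgroup and $N = N_{\GL_r}(A)$ for its normalizer, the open stratum where the centralizer is conjugate to $A$ is, up to a piece of negligible dimension coming from the closed strata with smaller centralizer, the induced variety $\Ind_N^{\GL_r}(A^n)$, with the residual $\GL_r$-action recorded by left multiplication on the $\GL_r$-factor of $(\GL_r \times A^n)/N$.

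Next I would pass to the partition-indexed quotients. Since the $S_n$-action permuting the tuple commutes with both the conjugation $\GL_r$-action and the induction, we get
\[
\Ind_N^{\GL_r}(A^n)/S_{\lambda[n]} = \Ind_N^{\GL_r}\!\left(\Sym_N^{n-|\lambda|}A \times \prod_i \Sym_N^{\lambda_i}A\right)
\]
as $\GL_r$-varieties, just as in the $\GL_2$ case. Now $A$ is connected Abelian, hence $A \cong \GG_m^a \times (\text{unipotent}) \cong \GG_m^a \times \AA^b$ as an $N$-variety with $N$ acting linearly on the affine factor and through a torus on $\GG_m^a$; embedding $A$ as an open subvariety of $\AA^{a+b}$ with a linear $N$-action (whose complement has negligible dimension), Corollary \ref{cor:stabsymmetric} applies with $G$ replaced by $N$ and gives that each $\Sym_N^n A$ is motivically stable in $\widehat{M_\LL^{N}}$, with an explicit limit $[\GL_{a+b}]/\prod_{i=1}^{a+b}(\LL^{a+b}-\LL^i)$-type factor. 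Then Corollary \ref{cor:resindmotivic} says $\Ind_N^{\GL_r}$ is continuous $\widehat{M_\LL^{N}} \to \widehat{M_\LL^{\GL_r}}$, so the induced classes converge in the $\GL_r$-equivariant completion; by Lemma \ref{lem:openstabilizeallstabilize} (or directly, since the dropped strata have strictly smaller dimension) the correction terms from $S^n$ and the lower strata do not affect stability. Summing over the finitely many conjugacy classes of maximal Abelian $A$, every $[C_n(\GL_r)/S_{\lambda[n]}]/\LL^{\dim C_n(\GL_r)}$ converges in $\widehat{M_\LL^{\GL_r}}$, which is exactly motivic representation stability of $\{C_n(\GL_r)\}_n$ as a sequence of $\GL_r$-varieties.

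The main obstacle I expect is the bookkeeping of the $\GL_r$-action through the induction: Corollary \ref{cor:stabsymmetric} is stated as giving a limit in $\widehat{M_\LL^G}$ for the \emph{restricted} action, and here one needs the limit of $\Ind_N^{\GL_r}(\Sym_N^n A)$ as a $\GL_r$-variety, so I must check that induction genuinely commutes with the symmetric-power construction at the level of $\widehat{M_\LL^{(-)}}$ and that the residual $\GL_r$-action on the limit is the expected left-multiplication action on a quotient of $\GL_r$ — this is where one has to be careful that the "negligible dimension" arguments used to replace $A^n$ by a subvariety on which $N$ acts with nice stabilizers are compatible with the $\GL_r$-equivariant filtration (the codimension shift $c = \operatorname{codim}_{\GL_r} N$ in Corollary \ref{cor:resindmotivic} is harmless since it is independent of $n$, but this must be said). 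A secondary point is verifying that a maximal connected Abelian subgroup of $\GL_r$ really does admit an $N$-equivariant open embedding into some affine space with linear $N$-action, which is needed to invoke Corollary \ref{cor:stabsymmetric}; for $\GL_r$ the maximal Abelian subgroups are, up to conjugacy, of the form $\prod_j \GG_m \cdot \mathbf{1}_{d_j}$ sitting inside blocks, so this is straightforward but should be spelled out.
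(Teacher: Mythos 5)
Your overall route --- stratify $C_n(\GL_r)$ by the finitely many conjugacy classes of maximal Abelian subgroups $A$, write each stratum up to negligible pieces as $\Ind_{N}^{\GL_r}(A^n)$ with $N=N_{\GL_r}(A)$, commute the $S_{\lambda[n]}$-quotient past the induction to get $\Sym_N$-powers of $A$, and conclude with Corollary \ref{cor:stabsymmetric} together with the continuity of $\Ind$ from Corollary \ref{cor:resindmotivic} --- is the same as the paper's. However, the argument does not close at exactly the point you dismiss as ``straightforward but should be spelled out'': your claim that the maximal Abelian subgroups of $\GL_r$ are, up to conjugacy, products of scalar blocks $\prod_j \GG_m\cdot \mathbf{1}_{d_j}$ (essentially tori) is false, already for $r=2$, where $J=\left\{\smatrix{\lambda & b\\ 0 & \lambda}\right\}$ is maximal Abelian and not a torus (you used this very stratum in your $\GL_2$ warm-up); for $\GL_{2m}$ the unit group of the Schur algebra $\left\{\lambda I + \smatrix{0 & B\\ 0 & 0}\right\}$ is maximal Abelian of dimension $m^2+1$ with a large unipotent part. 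Consequently your intermediate assertion that $A\cong\GG_m^a\times\AA^b$ sits $N$-equivariantly as an open subvariety of $\AA^{a+b}$ with \emph{linear} $N$-action is unsubstantiated, and this is precisely the hypothesis needed to invoke Corollary \ref{cor:stabsymmetric}; without it the whole reduction collapses.

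The missing input is what the paper supplies by citing Schur and Jacobson: a maximal Abelian subgroup equals its own centralizer in $\GL_r$, hence is the unit group of the commutative subalgebra $\mathfrak{a}\subseteq M_r(k)$ centralizing it, i.e.\ an open subvariety of the affine space $\mathfrak{a}$ whose complement has strictly smaller dimension, and $N_{\GL_r}(A)$ acts on $\mathfrak{a}$ linearly by conjugation. With that statement replacing your structural claim about $A$, the remaining bookkeeping in your proposal (compatibility of $\Ind$ with the symmetric-power construction, discarding lower-dimensional strata, and the $n$-independent codimension shift in Corollary \ref{cor:resindmotivic}) is sound and coincides with the paper's proof of the corollary.
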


\begin{proof}
     The proof for $\GL_3$ works similarly to the case of $\GL_2$ by working through all possible centralizers (see Theorem \ref{thm:gl3recursion}). 
     
     In the general case, one can use the theorem of \cite{schur1905theorie, jacobson1944schur} stating that the maximal Abelian subgroups of $\GL_r$ are open subsets of affine spaces and the action of $\GL_r$ can be extended to a linear action on the affine spaces. Therefore, Corollary \ref{cor:stabsymmetric} implies that the $\GL_r$-varieties of commuting $n$-tuples of $\GL_r$ satisfy motivic representation stability.
\end{proof}

\textbf{Counterexample}

The assumption of the connectedness in Theorem \ref{thm:motrepstabgengp} is crucial. 

\begin{proposition}
    The variety of commuting $n$-tuples in $\SL_r$ does not satisfy motivic representation stability.
\end{proposition}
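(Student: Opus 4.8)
The plan is to run the stratification from the proof of Theorem~\ref{thm:motrepstabgengp} and show that a single \emph{non-connected} maximal abelian subgroup of $\SL_r$ (with $r\ge 2$) already obstructs stability of one of the sequences in Definition~\ref{def:motrepstab}. It suffices to treat $\lambda=\emptyset$, so that $S_{\lambda[n]}=S_n$, and to show that $\{[C_n(\SL_r)/S_n]\}_n$ is not motivically stable; and it is enough to detect this after the continuous map $\widehat{M_\LL^{\SL_r}}\to\widehat{M_\LL}$ forgetting the $\SL_r$-action followed by the $E$-polynomial measure of Remark~\ref{rem:epolymotmeasure}. The source of the obstruction is that the centre $\mu_r$ of $\SL_r$ is contained in every maximal abelian subgroup: concretely, for a regular unipotent $u$ the centraliser $A:=Z_{\SL_r}(u)$ is maximal abelian, and as a $k$-variety $A\cong\mu_r\times\AA^{r-1}$ (the units of $k[u]\cong k[x]/(x-1)^r$ lying in $\SL_r$), a disjoint union of $r$ affine spaces, so $[A]=r\LL^{r-1}$ is \emph{not} monic in $\LL$.

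The first step is the behaviour of symmetric powers. From $Z(A,t)=Z(\AA^{r-1},t)^{r}=(1-\LL^{r-1}t)^{-r}$ one gets $[\Sym^{n}A]=\binom{n+r-1}{r-1}\LL^{n(r-1)}$, so by Lemma~\ref{lem:motstabformonic} the sequence $\{\Sym^{n}A\}_n$ does not stabilise, and its leading ($\LL^{n(r-1)}$-)coefficient is a polynomial in $n$ of degree $r-1\ge1$. By contrast, for a \emph{connected} maximal abelian subgroup $A'=T'\times U'$ of $\SL_r$ the class $[A']=(\LL-1)^{\dim T'}\LL^{\dim U'}$ is monic, so — since $N_{\SL_r}(A')$ acts on $A'$ by conjugation and hence trivially on $A'$ itself — the corresponding stratum is motivically stable by exactly the part of the argument of Theorem~\ref{thm:motrepstabgengp} that does not use its connectedness hypothesis (Lemma~\ref{lem:motstabformonic}, Corollaries~\ref{cor:stabsymmetric} and~\ref{cor:resindmotivic}).

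Next I would invoke the stratification from the proof of Theorem~\ref{thm:motrepstabgengp}: up to pieces of strictly smaller dimension, $C_n(\SL_r)$ is the disjoint union over conjugacy classes of maximal abelian subgroups $A'$ of $S_n$-invariant locally closed strata $\sigma_{A'}$ with $\sigma_{A'}/S_n\approx\Ind_{N_{\SL_r}(A')}^{\SL_r}(\Sym^{n}A')$ and $\dim\sigma_{A'}=\dim(\SL_r/N_{\SL_r}(A'))+n\dim A'$, so $[C_n(\SL_r)/S_n]=\sum_{[A']}[\sigma_{A'}/S_n]$. Since $\SL_r/N_{\SL_r}(A')$ is irreducible and the $E$-polynomial of a fibre bundle has top $uv$-degree equal to the dimension with leading coefficient the product of those of base and fibre, the top term of $E(\sigma_{A'}/S_n)$ is $\binom{n+|\pi_0(A')|-1}{|\pi_0(A')|-1}(uv)^{\dim\sigma_{A'}}$; in particular the stratum $\sigma_A$ of our $A$ contributes a coefficient $\binom{n+r-1}{r-1}$ growing in $n$ at the $uv$-weight $\dim\sigma_A$. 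I would then argue that this growth survives in $E(C_n(\SL_r)/S_n)$ at the weight $\dim\sigma_A$ and hence in $E(C_n(\SL_r)/S_n)/(uv)^{d_n}$ at the weight $\dim\sigma_A-d_n$, where $d_n=\dim C_n(\SL_r)$: strata $\sigma_{A'}$ with $\dim A'<\dim A$ have $E(\Sym^nA')$ supported, for $n$ large, in weights below $\dim\sigma_A$; strata with $\dim A'>\dim A$ have it supported above $\dim\sigma_A$ (the $E$-polynomial of a symmetric power concentrates near its top weight because $Z(A',t)$ has all its $t$-poles at $\LL^{-d}$ with $d\ge1$); connected strata of dimension $\ge\dim\sigma_A$ contribute amounts that are bounded (indeed eventually constant) in $n$; and, choosing $A$ to have minimal normalizer dimension among the non-connected maximal abelian subgroups of the same dimension, any remaining non-connected stratum either contributes $0$ or contributes another strictly positive growing term. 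Hence this coefficient is an unbounded integer sequence, so the sequence is not Cauchy in $\widehat{\ZZ[u,v,\frac1{uv}]}$, so $\{[C_n(\SL_r)/S_n]\}_n$ does not converge in $\widehat{M_\LL^{\SL_r}}$, and by Definition~\ref{def:motrepstab} the variety of commuting $n$-tuples in $\SL_r$ is not motivically representation stable.

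The step I expect to be the real obstacle is this last one — ruling out cancellation of the divergent contribution of $\sigma_A$ by subleading $E$-polynomial terms of other, larger-dimensional strata. The two devices I would use are the concentration of $E(\Sym^n A')$ near weight $n\dim A'$ (so that, at the fixed ``slope'' of the weight $\dim\sigma_A\sim n(r-1)$, only strata with $\dim A'=r-1$ can interfere) and the choice of $A$ with minimal $\dim N_{\SL_r}(A)$ among such strata, after which the only surviving contributions at that weight come from $\sigma_A$ and possibly from other non-connected strata of the same dimension, all with non-negative (in fact strictly positive growing) leading coefficients. Two routine ancillary points are that $\Sym^n A'$ and the honest fibre $(A'^{\,n})_{\mathrm{prim}}/S_n$ differ only in negligible dimension (the non-primitive locus of $A'^{\,n}$ lies in a proper algebraic subgroup, which for $A=Z_{\SL_r}(u)$ is immediate since $Z_{\SL_r}(A^{\circ})=A$) and that $d_n=\dim C_n(\SL_r)$ is attained by one of the strata for all large $n$.
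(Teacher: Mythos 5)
Your overall strategy is the paper's (non-connected centre $\Rightarrow$ some maximal abelian subgroups have non-monic class $\Rightarrow$ Lemma~\ref{lem:motstabformonic} kills stability of their strata), but your specific witness breaks the argument for $r\ge 4$. You anchor the divergence to $A=Z_{\SL_r}(u)\cong\mu_r\times\AA^{r-1}$, whose stratum has dimension $\dim\sigma_A=\dim\SL_r-\dim N_{\SL_r}(A)+n(r-1)$. But $d_n=\dim C_n(\SL_r)$ grows like $n\lfloor r^2/4\rfloor$, coming from the maximal abelian subgroups of maximal dimension (the Schur--Jacobson subgroups already cited in the paper, which meet $\SL_r$ in $\mu_r\times\AA^{\lfloor r^2/4\rfloor}$). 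Hence for $r\ge 4$ the deficit $d_n-\dim\sigma_A$ tends to infinity, and your detection weight $\dim\sigma_A-d_n$ is not a fixed weight: a coefficient growing polynomially in $n$ at weights drifting to $-\infty$ lies in $F_{-(d_n-\dim\sigma_A)}$ and therefore converges to $0$ both in $\widehat{M_\LL}$ and after the $E$-polynomial measure. So the unboundedness of $\binom{n+r-1}{r-1}$ at that weight contradicts nothing; your ``slope/concentration'' discussion compares other strata to $\dim\sigma_A$, whereas the only comparison that matters is to $d_n$. Your argument as written only establishes the proposition for $r=2,3$, where the non-connected strata sit at bounded codimension below $d_n$.

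The fix is to move the witness to the top: take a non-connected maximal abelian subgroup of \emph{maximal} dimension, e.g.\ the Schur-type $A'\cong\mu_r\times\AA^{\lfloor r^2/4\rfloor}$ for $r\ge 4$ (and your $A$, or the paper's argument, for $r=2,3$). Then $[\Sym^nA']=\binom{n+r-1}{r-1}\LL^{n\lfloor r^2/4\rfloor}$ and the corresponding stratum is top-dimensional for large $n$, so the divergence is detected at the single weight $0$ after normalizing by $\LL^{d_n}$: there the coefficient is the number of top-dimensional irreducible components of $C_n(\SL_r)/S_{\lambda[n]}$, all strata contribute non-negatively, connected top-dimensional strata contribute boundedly, and the non-connected one contributes an unbounded count --- which also disposes of the cancellation worry you flag much more cheaply than your minimal-normalizer device. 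For comparison, the paper's own proof is a two-line version of this same idea (it asserts non-monicity of the strata and invokes Theorem~\ref{thm:motrepstabgengp} and Lemma~\ref{lem:motstabformonic}, glossing over both the choice of stratum and possible cancellation; it even overstates that \emph{all} maximal abelian subgroups are non-connected, though the maximal torus is connected), so your instinct to control cancellation is good --- it just has to be run at the top-dimensional, non-connected strata rather than at the regular-unipotent centralizer.
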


\begin{proof}
    The center of $\SL_r$ is not connected, it is isomorphic to the finite cyclic group of order $r$. Therefore, the maximal Abelian subgroups are also not connected, their virtual classes are not monic polynomials in $\LL$. Since, the center commutes with any normalizer, the virtual class of the underlying variety of $\Ind_{N_G(A)}^{\SL_r}(A^n/S_{\lambda[n]})$ is also not a monic polynomial in $\LL$, and thus using Theorem \ref{thm:motrepstabgengp}, Lemma \ref{lem:motstabformonic} shows that the variety of commuting $n$-tuples in $\SL_r$ can not satisfy motivic representation stability. 
\end{proof}

Similar argument works for other linear algebraic groups as well with non-connected centers.

 \section{Motivic stability of representation varieties of free groups and free Abelian groups corresponding to reductive groups of increasing rank}

In this section, we provide an algebraic analogue of Theorem 9.6 of \cite{ramras2021homological}: we conjecture that the family of varieties of commuting $n$-tuples in $\GL_r$ satisfies motivic stability as the rank, $r$, tends to infinity. To provide evidence, we prove the conjecture for $n=2$.

\begin{conjecture}\label{con:motstabrank}
     Fix a positive integer $n$. Consider the family of varieties $C_n(\GL_r(k))$. Then,
     \[\lim_{r\to \infty} \frac{[C_n(\GL_r(k))]}{\LL^{\dim C_n(\GL_r(k))}}\]
     exists in $\widehat{M_\LL}$.

\end{conjecture}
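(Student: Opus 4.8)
The plan is to reduce Conjecture~\ref{con:motstabrank}, via a simultaneous generalized eigenspace decomposition, to the corresponding statement for commuting \emph{nilpotent} $n$-tuples, and then to attack the latter through a plethystic generating function that generalizes the Feit--Fine identity.

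\emph{Step 1: the eigenvalue stratification.} For $(A_1,\dots,A_n)\in C_n(\GL_r(k))$, the commutative subalgebra $k[A_1,\dots,A_n]\subseteq\mathrm{Mat}_r(k)$ makes $k^r$ into a module which decomposes over its finitely many maximal ideals; since $k$ is algebraically closed and the $A_i$ are invertible, this decomposition is indexed by a finite set of \emph{joint eigenvalues} in $\GG_m^n$, and on each generalized eigenspace the $A_i$ differ from scalars by commuting nilpotents. Writing $\mathcal{N}_n(s)$ for the variety of $n$-tuples of commuting nilpotent $s\times s$ matrices, using that $\GL_r$ is special (so $[\GL_r/\prod_i\GL_{s_i}]=[\GL_r]/\prod_i[\GL_{s_i}]$) together with the motivic power-structure / Kapranov-zeta formalism, this stratification should yield the identity
\[
Z_n(x):=\sum_{r\ge 0}\frac{[C_n(\GL_r(k))]}{[\GL_r]}\,x^r\;=\;\mathrm{Exp}\!\left((\LL-1)^n\,\mathrm{Log}\,N_n(x)\right),\qquad N_n(x):=\sum_{s\ge 0}\frac{[\mathcal{N}_n(s)]}{[\GL_s]}\,x^s,
\]
in $\widehat{M_\LL}[[x]]$, where the exponent is $(\LL-1)^n=[\GG_m^n]$ and $\mathrm{Exp},\mathrm{Log}$ are the plethystic exponential/logarithm. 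This makes sense because $[\GL_r]=\LL^{r(r-1)/2}\prod_{i=1}^r(\LL^i-1)$ is a unit in $\widehat{M_\LL}$ (each $\LL^i-1=\LL^i(1-\LL^{-i})$ is invertible there), and in particular $[\GL_r]/\LL^{r^2}=\prod_{i=1}^r(1-\LL^{-i})\to\kappa:=\prod_{i\ge 1}(1-\LL^{-i})$, again a unit.

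\emph{Step 2: reduction to the nilpotent case.} Writing $[C_n(\GL_r(k))]/\LL^{\dim C_n(\GL_r(k))}=\LL^{-e_r}\,[x^r]Z_n(x)\cdot\prod_{i=1}^r(1-\LL^{-i})$ with $e_r:=\dim C_n(\GL_r(k))-r^2$ (which equals the top filtration degree of $[x^r]Z_n(x)$, since division by the unit $[\GL_r]$ shifts the top degree by exactly $r^2$), one then analyzes the plethystic formula coefficient by coefficient: $[x^r]Z_n(x)$ is a finite sum over partitions $r=\sum s_j$ of symmetric products of the coefficients of $(\LL-1)^n\,\mathrm{Log}\,N_n(x)$, whose leading behaviour is governed by $\dim\mathcal{N}_n(s_j)$. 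Provided $\dim\mathcal{N}_n(r)$ is known to grow convexly, exactly one partition dominates (the all-ones partition for small $n$, as in Example~\ref{ex:freemotrepstab} and Corollary~\ref{cor:stabsymmetric}; the one-part partition once $\dim\mathcal{N}_n(r)$ is quadratic, which happens for $n\ge 4$), the remaining partitions' contributions fall into ever-deeper filtration, and the convergence of $[C_n(\GL_r(k))]/\LL^{\dim}$ reduces, using Lemma~\ref{lem:motstabformonic}, Lemma~\ref{lem:openstabilizeallstabilize} and Corollary~\ref{cor:resindmotivic}, to the \emph{same assertion for commuting nilpotent tuples}: that $\lim_{r\to\infty}[\mathcal{N}_n(r)]/\LL^{\dim\mathcal{N}_n(r)}$ exists in $\widehat{M_\LL}$. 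For $n=1$ one has $[\mathcal{N}_1(r)]=\LL^{r(r-1)}$ and the limit is trivial; for $n=2$ the commuting nilpotent variety is irreducible of known dimension with a computable class (this is what underlies the $n=2$ case, and recovers a $\GL_r$-analogue of Feit--Fine).

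\emph{Main obstacle.} The crux is the input to Step 2: motivic stability, and even the dimension, of $\mathcal{N}_n(r)$ as $r\to\infty$. For $n\ge 3$ the commuting nilpotent variety is reducible, and neither its irreducible components nor its dimension are known in general --- this is precisely the circle of questions around Schur's bound on commutative subalgebras of $\mathrm{Mat}_r$, the Iarrobino--Yam\'eogo / Basili--Iarrobino analysis of punctual Hilbert schemes, and parametrizations of finite-length $k[x_1,\dots,x_n]$-modules. Already the $n$-tuples supported in a maximal nilpotent commutative subalgebra (e.g.\ the strictly block-upper-triangular $\AA^{\lceil r/2\rceil\lfloor r/2\rfloor}$) force $\dim\mathcal{N}_n(r)$ to grow like $\frac{n+1}{4}r^2$, so for $n\ge 4$ the normalization $\LL^{\dim}$ is itself controlled by these hard strata. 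To finish one would need either a classification of the top-dimensional components of $\mathcal{N}_n(r)$ fine enough to see that their classes are asymptotically monic and stabilize, or a soft argument (in the spirit of Lemma~\ref{lem:openstabilizeallstabilize}) bypassing the classification. I expect the cases $n=2$, and plausibly $n=3$, to follow from existing structure theory, while the general conjecture requires genuinely new understanding of commuting nilpotent varieties.
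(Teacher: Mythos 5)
The statement you are proving is a conjecture: the paper itself does not prove it in general, but only the case $n=1$ (immediate) and a quantitative $n=2$ case (Theorem \ref{thm:motstabrank}). Your proposal likewise does not establish the conjecture, and you say so yourself: the entire argument funnels through the assertion that $[\mathcal{N}_n(r)]/\LL^{\dim \mathcal{N}_n(r)}$ stabilizes, and for $n\geq 3$ neither the dimension nor the component structure of the commuting nilpotent variety is known, so Step 2 has no input. This is a genuine gap, not a technicality; since for $n\geq 4$ the normalizing power $\LL^{\dim C_n(\GL_r)}$ is itself governed by the unknown nilpotent strata (your $\tfrac{n+1}{4}r^2$ lower bound), the reduction does not even pin down the quantity whose limit is being taken. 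A further caution on Step 1: the plethystic identity $Z_n(x)=\mathrm{Exp}\bigl((\LL-1)^n\,\mathrm{Log}\,N_n(x)\bigr)$ requires quotients by the symmetric groups permuting the joint eigenvalues to behave multiplicatively in $\K_0(\Var_k)$, exactly the K\"unneth-type issue the paper is careful about (Theorem \ref{thm:motkunneth} and the torsor/specialness arguments in Section 5), so it needs a power-structure justification rather than being automatic.

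For the one case where a comparison is possible, $n=2$, your route is genuinely different from the paper's. The paper never passes to nilpotent commuting pairs: it stratifies $C_2(\GL_r(k))$ by the Jordan type $\mathcal{J}$ of the first matrix, uses that $\GL_r$ and the centralizers $Z_\mathcal{J}$ are special groups to show $[C_2^{\mathcal{J}}(\GL_r(k))]=[E_\mathcal{J}/H_\mathcal{J}][\GL_r(k)]$ via two torsor quotients, sums over Jordan types to get $[C_2(\GL_r(k))]=[\GL_r(k)][\mathcal{C}_r]$, and then invokes the motivic Macdonald-style stability of the variety of conjugacy classes (Theorem \ref{thm:motstabconj}). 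That argument buys an exact closed form for $[C_2(\GL_r(k))]$ and sidesteps the Feit--Fine machinery entirely, at the cost of being special to $n=2$ (for $n\geq 3$ the second through $n$-th matrices no longer sweep out a centralizer torsor). Your approach is more uniform in $n$ and would, if the nilpotent input were available, give the general statement; note also that the paper's $n=2$ limit is taken in $\widehat{\K_0(\Stck_k)}$ with normalization $q^r[\GL_r(k)]$, whereas the conjecture (and your reduction) is phrased with $\LL^{\dim}$ in $\widehat{M_\LL}$, so if you pursue this you should make the translation between the two normalizations and completions explicit, as you begin to do with the unit $\prod_{i\geq 1}(1-\LL^{-i})$.
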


In the case of $n=1$, the conjecture is immediate. This section is devoted to prove a quantitative version of Conjecture \ref{con:motstabrank} in the case $n=2$. 

\begin{theorem}\label{thm:motstabrank}
     Consider the family of varieties, $C_2(\GL_r(k))$. Then,
     \[\lim_{r\to \infty} \frac{[C_2(\GL_r(k))]}{q^r[\GL_r(k)]}=1\]
     in $\widehat{\K_0(\Stck_k)}$.
 \end{theorem}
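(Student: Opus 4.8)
The plan is to replace the geometric limit by a combinatorial one through a point count, and then to estimate the resulting generating function. (Note $\dim C_2(\GL_r)=r^2+r$, which is also the $\LL$-degree of $\LL^r[\GL_r]$, so this is a motivic stability statement with the normalization by $\LL^{\dim}$ replaced by the slightly nicer $\LL^r[\GL_r]$.) First I would prove
\[
[C_2(\GL_r)]=[\GL_r]\cdot P_r(\LL)\quad\text{in }\K_0(\Var_k),\qquad\text{where}\qquad \sum_{r\ge 0}P_r(\LL)\,x^r=\prod_{i\ge 1}\frac{1-x^i}{1-\LL x^i}.
\]
On one hand, $[C_2(\GL_r)]$ is a polynomial in $\LL$: stratifying $C_2(\GL_r)$ by the conjugacy type of the first coordinate $A$ (its Jordan type together with the multiplicities of its eigenvalues), on each stratum the pair $(A,B)$ is given by a choice of pairwise-distinct eigenvalues of $A$ (a point of an open subvariety of a torus) and a point $B$ in a fixed block centralizer $Z$, modulo the action of $\GL_r$ and of the group permuting equal blocks; since $\GL_r$ and $Z$ are special groups and the permuting action is free on the eigenvalue factor, each stratum has polynomial class --- this is the mechanism underlying the branching-matrix computation of Theorem~\ref{thm:classofcommuting}, and uses Theorem~\ref{thm:motkunneth} together with the motivicity of free symmetric-group quotients of such varieties. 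On the other hand, the orbit-counting identity $\#\{(A,B)\in G^2:AB=BA\}=\#G\cdot k(G)$ (with $k(G)$ the number of conjugacy classes) applied to $G=\GL_r(\FF_q)$, together with the classical identity $\sum_{r\ge0}k(\GL_r(\FF_q))\,x^r=\prod_{i\ge1}\frac{1-x^i}{1-qx^i}$, yields $\#C_2(\GL_r(\FF_q))=\#\GL_r(\FF_q)\cdot P_r(q)$ for every prime power $q$; comparing the polynomials $[C_2(\GL_r)]$ and $[\GL_r]P_r(\LL)$ at all prime powers forces them to be equal. In particular $\frac{[C_2(\GL_r)]}{\LL^r[\GL_r]}=\LL^{-r}P_r(\LL)$ in $\K_0(\Stck_k)$, so it suffices to show $\LL^{-r}P_r(\LL)\to 1$.

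Next I would analyze the leading coefficients of $P_r$. Using $\prod_{i\ge1}(1-\LL x^i)^{-1}=\sum_{\lambda}\LL^{\ell(\lambda)}x^{|\lambda|}$ (sum over all partitions) and $\prod_{i\ge1}(1-x^i)=\sum_{\mu}(-1)^{\ell(\mu)}x^{|\mu|}$ (sum over partitions into distinct parts), the coefficient of $\LL^{r-j}$ in $P_r$ equals $\sum_{e=0}^{j}q_d(e)\,p_{\le r-j}(j-e)$, where $p_{\le n}(m)$ is the number of partitions of $m$ into at most $n$ parts and $q_d(e)=\sum_{\mu\vdash e\text{ distinct}}(-1)^{\ell(\mu)}$. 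When $r\ge 2j$ the restriction ``at most $r-j$ parts'' is vacuous in every summand, so this collapses to the coefficient of $x^j$ in $\bigl(\sum_e q_d(e)x^e\bigr)\bigl(\sum_m p(m)x^m\bigr)=\prod_i(1-x^i)\cdot\prod_i(1-x^i)^{-1}=1$, which is $0$ for $j\ge 1$. Hence $P_r(\LL)$ is monic of degree $r$ and its coefficients in degrees $r-1,r-2,\dots,\lceil r/2\rceil$ all vanish, so $\deg\bigl(P_r(\LL)-\LL^r\bigr)\le\lceil r/2\rceil-1$.

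Therefore $\LL^{-r}\bigl(P_r(\LL)-\LL^r\bigr)$ is a $\ZZ$-linear combination of the classes $\LL^{-k}$ with $k\ge\lfloor r/2\rfloor+1$, hence lies in $F_{-(\lfloor r/2\rfloor+1)}\K_0(\Stck_k)$; since this index tends to $-\infty$ as $r\to\infty$, we conclude $\LL^{-r}P_r(\LL)\to 1$, i.e.\ $\lim_{r\to\infty}\frac{[C_2(\GL_r)]}{q^r[\GL_r]}=1$ in $\widehat{\K_0(\Stck_k)}$. The main obstacle is the first half of the reduction step --- upgrading the elementary point count to an equality of virtual classes, i.e.\ showing $[C_2(\GL_r)]$ is polynomial in $\LL$ so that the statement holds in $\widehat{\K_0(\Stck_k)}$ and not merely after applying the $E$-polynomial. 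This amounts to bookkeeping the centralizer types of $\GL_r$ and verifying that the resulting induced varieties and their free symmetric-group quotients remain motivic, which is routine in spirit and parallels the branching-matrix computations of the previous section, but is the one point where real care is required; alternatively one may invoke a motivic version of the Feit--Fine formula.
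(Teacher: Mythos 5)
Your proposal is correct and follows essentially the same route as the paper: stratify $C_2(\GL_r)$ by the Jordan type of the first matrix, use that $\GL_r$ and the block centralizers are special (plus the free permutation action on the eigenvalue factor) to factor the class as $[\GL_r]$ times the class of the space of conjugacy classes, and then show the latter is $\LL^r$ plus lower-order terms via the Macdonald/Feit--Fine generating function $\prod_{i\ge 1}\frac{1-x^i}{1-\LL x^i}$. The only differences are cosmetic: you make the asymptotics explicit through the coefficient-vanishing computation for $P_r(\LL)$ (where the paper cites Macdonald's calculation), and you offer an optional finite-field/Katz identification of the polynomial, whereas the paper identifies $[\mathcal{C}_r]$ directly by the motivic parametrization of Jordan forms.
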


The above theorem can be thought of as the motivic version of \cite{macdonald1981numbers}. We begin with analyzing the virtual classes of all the conjugacy classes of elements.

\begin{theorem}\label{thm:motstabconj}
    Consider the family of groups, $\GL_r(k)$. Then, the virtual class of the family of the union of the conjugacy classes of $\GL_r(k)$ is motivically stable.
\end{theorem}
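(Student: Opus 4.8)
The plan is to realize the ``virtual number of conjugacy classes of $\GL_r$'' as the virtual class of an explicit variety $\cC_r$, to package the classes $[\cC_r]$ into a single generating function, and to read off the limit from that generating function. First I would make the statement precise. Stratify $\GL_r(k)$ by similarity type: since $k$ is algebraically closed, a conjugacy class of $\GL_r(k)$ is the same datum as a finitely supported function from $\GG_m$ (the eigenvalues) to the set of partitions with the sizes of the partitions summing to $r$; equivalently, recording for each part size $i\ge 1$ the effective divisor $D_i$ on $\GG_m$ whose multiplicity at $\lambda$ is the number of size-$i$ Jordan blocks with eigenvalue $\lambda$, a conjugacy class is a tuple $(D_i)_{i\ge 1}$ with $D_i\in\Sym^{n_i}(\GG_m)$, almost all $n_i=0$, and $\sum_i i\,n_i=r$. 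Grouping the strata of $\GL_r(k)$ accordingly, the stratum of a profile $(n_i)$ is a family of conjugacy classes over the base $\prod_{i\ge 1}\Sym^{n_i}(\GG_m)$, and I take $\cC_r$ to be the disjoint union of these bases over all admissible profiles, so that
\[
  [\cC_r]\;=\;\sum_{\substack{(n_i)_{i\ge1}\\ \sum_i i\,n_i=r}}\ \prod_{i\ge1}\bigl[\Sym^{n_i}(\GG_m)\bigr]\ \in\ \K_0(\Var_k),\qquad \dim\cC_r=r.
\]
(For $r=2$ the admissible profiles are $(2,0,0,\dots)$, with base $\Sym^2\GG_m$ of class $\LL^2-\LL$, and $(0,1,0,\dots)$, with base $\GG_m$ of class $\LL-1$, giving $[\cC_2]=\LL^2-1$.)

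Next I would compute the generating function of the $[\cC_r]$. With $Z(X,t)=\sum_{n\ge0}[\Sym^n X]\,t^n$ the Kapranov motivic zeta function, which is multiplicative over disjoint unions and satisfies $Z(\AA^1,t)=(1-\LL t)^{-1}$ and $Z(\mathrm{pt},t)=(1-t)^{-1}$, one gets $Z(\GG_m,t)=\tfrac{1-t}{1-\LL t}$, hence
\[
  \sum_{r\ge0}[\cC_r]\,t^r\;=\;\prod_{i\ge1}Z(\GG_m,\,t^i)\;=\;\prod_{i\ge1}\frac{1-t^i}{1-\LL t^i},
\]
the motivic lift of Macdonald's product formula \cite{macdonald1981numbers} for the number of conjugacy classes of $\GL_r(\FF_q)$.

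Finally I would extract the asymptotics in $\widehat{M_\LL}$. Substituting $t=s/\LL$ into the generating function gives $\sum_{r\ge0}[\cC_r]\LL^{-r}s^r=\prod_{i\ge1}\tfrac{1-\LL^{-i}s^i}{1-\LL^{1-i}s^i}$, so $[\cC_r]\LL^{-r}$ is the coefficient of $s^r$ there. Splitting off the $i=1$ factor $\tfrac{1-\LL^{-1}s}{1-s}$, I write this generating function as $\tfrac{h(s)}{1-s}$ with $h(s)=(1-\LL^{-1}s)\prod_{i\ge2}\tfrac{1-\LL^{-i}s^i}{1-\LL^{1-i}s^i}$, a formal power series in $s$ with coefficients in $\ZZ[\LL^{-1}]$. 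For $i\ge2$ each factor $\tfrac{1-\LL^{-i}s^i}{1-\LL^{1-i}s^i}$ equals $1$ plus a series beginning in degree $s^i$ all of whose coefficients are divisible by $\LL^{-(i-1)}$; a short bookkeeping argument then shows that the $s^m$-coefficient of $h$ has $\LL^{-1}$-valuation at least $\lfloor m/2\rfloor$, hence lies in $F_{-\lfloor m/2\rfloor}M_\LL$ and tends to $0$ in $\widehat{M_\LL}$. Consequently $h(1):=\sum_{m\ge0}[s^m]h(s)$ converges in $\widehat{M_\LL}$, and since $[\cC_r]\LL^{-r}=[s^r]\tfrac{h(s)}{1-s}=\sum_{m=0}^{r}[s^m]h(s)$, the sequence $\{[\cC_r]/\LL^{\dim\cC_r}\}_r$ is Cauchy and converges to $h(1)=(1-\LL^{-1})\prod_{i\ge2}\tfrac{1-\LL^{-i}}{1-\LL^{1-i}}=(1-\LL^{-1})\cdot(1-\LL^{-1})^{-1}=1$ by telescoping. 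Composing with the localization $\widehat{M_\LL}\to\widehat{\K_0(\Stck_k)}$ transfers the conclusion there.

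I expect the main obstacle to be the first step: checking that the similarity-type stratification of $\GL_r$ is motivic and, more importantly, that each stratum is a Zariski-locally trivial fibration (using that $\GL_r$ is special) over its base $\prod_i\Sym^{n_i}(\GG_m)$ with fibre a single conjugacy class $\GL_r/Z_{\GL_r}(g)$. This identification is what makes $[\cC_r]$ the right object, and combined with the torsor computation over each stratum it will feed the factorization $[C_2(\GL_r)]=[\GL_r]\cdot[\cC_r]$ underlying Theorem~\ref{thm:motstabrank}; the combinatorial bookkeeping matching Jordan forms in families to the products $\prod_i\Sym^{n_i}(\GG_m)$ also needs care. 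Once that is in place, the generating-function and asymptotic steps are routine.
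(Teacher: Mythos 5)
Your proposal is correct and follows essentially the same route as the paper: the paper also parametrizes conjugacy classes by Jordan type, with the eigenvalue data for blocks of size $l$ recorded by degree-$n_l$ polynomials with constant term $1$ and nonzero leading coefficient, which is exactly your $\Sym^{n_l}(\GG_m)$ of class $\LL^{n_l}-\LL^{n_l-1}$, and then invokes ``the same calculation as in Macdonald'' for the limit. Your explicit generating-function identity $\sum_r[\cC_r]t^r=\prod_{i\ge1}\frac{1-t^i}{1-\LL t^i}$ and the valuation/telescoping argument giving the limit $1$ in $\widehat{M_\LL}$ simply spell out the step the paper delegates to \cite{macdonald1981numbers}, so no genuine difference or gap.
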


\begin{proof}
    The proof is a straightforward generalization of the proof in \cite{macdonald1981numbers}, we add the details for the sake of completeness. Consider, $\GL_r$, and possible characteristic polynomial of an $[r\times r]$ matrix
    \[p(x)=\prod_{i=1}^k (x-\lambda_i)^{a_i}\]
    where the matrix has $k$ Jordan blocks of sizes $a_1\geq a_2\geq ...\geq a_k$ with eigenvalues $\lambda_i$ respectively. Consider the alteration of $p(x)$,
    \[q(x)=\prod_{i=1}^k (1-\lambda_i x)^{a_i}.\]
    We parametrize the different classes of conjugacy classes via factoring $q(x)$ using partitions corresponding to the size of the Jordan blocks:
    $q(x)=\prod_{l=1}^r h_l(x)^l$
    where $h_l(x)$ is the product of the factors of $q(x)$ of the form $(1-\lambda_i x)$ where $a_i=l$. The polynomials, $h_l(x)$, are polynomials with constant term 1 and of degree $n_l$ satisfying $\sum_{l=1}^r  ln_l=r$. Thus, the polynomials, $h_l(x)$ are parametrized by $\AA^{n_l-1}\times (\AA^1\setminus \{0\})$. Furthermore, the polynomials $h_l(x)$ uniquely determine Jordan block, therefore, the virtual class of the families of conjugacy classes corresponding to the partition $(a_1,a_2,...)$ is given as $\prod_{l=1}^r (q^{n_i}-q^{{n_i}-1})$. Using the same calculation as in \cite{macdonald1981numbers}, we get that the virtual class of all conjugacy classes, $[\mathcal{C}_r]$, satisfies
    \[\lim_{r\to \infty} \frac{[\mathcal{C}_r]}{q^r}=1\]
    in $\widehat{M_\LL^G}$, proving the statement.
\end{proof}

\begin{remark}
    For sake of clarity, we illustrate the computation for $\GL_2(k)$. The Jordan forms come in three different forms.
    \begin{itemize}
        \item The scalar matrices: The virtual class of the family of conjugacy classes of matrices that are conjugate to scalar matrices is $q-1$.
        \item The $J$-type matrices: The virtual class of the family of conjugacy classes of matrices that are conjugate to $J$-type matrices is $(q-1)$, namely that is given by the unique eigenvalue. In other words, $[J/\GL_2(k)]=q-1$.
        \item The $M$-type matrices: The virtual class of the family of conjugacy classes of matrices that are conjugate to a diagonal matrix of two different eigenvalues can be computed as follows. We parametrize the family using the characteristic polynomial. The variety of all characteristic polynomials is isomorphic to $\AA^1\times (\AA^1\setminus \{0\})$, parametrizing the two coefficients (the constant term cannot be 0). The $M$-type matrices correspond to characteristic polynomials with two different roots in this case. The scalar type matrices correspond to the characteristic polynomials that have a double root, meaning that the virtual class of the family of conjugacy classes of $M$-type matrices is $q(q-1)-(q-1)=(q-1)^2$.
    \end{itemize}
    So, the virtual class of all conjugacy classes of elements of $\GL_2(k)$ is $q-1+q-1+(q-1)^2=q^2-1$.
\end{remark}

Now, we turn our attention to the variety of commuting pairs, $C_2(\GL_r(k))$. We begin by describing the pieces of $C_2(\GL_r(k))$ as quotients by some subgroups of the symmetric group $S_r$. Consider a family of conjugacy classes given by a fixed type of Jordan normal form, $\mathcal{J}$. In other words, we consider the subvariety $C_2^\mathcal{J}(\GL_r(k))$ consisting of those commuting pairs $(A,B)$ where $A$ is conjugate to a matrix with a Jordan normal form of type $\mathcal{J}$. The possible such Jordan normal forms form a variety that is the quotient of the space of possible eigenvalues modulo a group action that permutes the possible eigenvalues. We denote this quotient as $E_\mathcal{J}/H_\mathcal{J}$ where $E_\mathcal{J}$ denotes the space of possible eigenvalues and $H_\mathcal{J}$ denotes the group acting on it. Note that the group $H_\mathcal{J}$ is a product of symmetric groups, and thus it has a "good" set of conjugacy classes of subgroups.

Using this, we get an algebraic map $\pi: C_2^\mathcal{J}(\GL_r(k))\to E_\mathcal{J}/H_\mathcal{J}$ assigning to a pair $(A,B)$ of commuting matrices the conjugacy class of $A$.

For example, consider the Jordan normal form
\[\left(\begin{array}{cccc} \lambda & 1 & 0 & 0\\
0 & \lambda & 0 & 0\\
0 & 0 & \mu & 1\\
0 & 0 & 0 & \mu\\
\end{array}\right)
\]
with $\lambda\ne \mu$, then the corresponding variety is given as the quotient of $(\AA^1\setminus \{0\})\times (\AA^1\setminus \{0\})\setminus \Delta_{\AA^1\setminus \{0\}}$ by the group $\ZZ/2\ZZ$ swapping the two coordinates (corresponding to the values of $\lambda$ and $\mu$).

Consider the Cartesian product

\[\begin{tikzcd}
    {X_\mathcal{J}}\ar[r] \ar[d]& C_2^{\mathcal{J}}(\GL_r(k))\ar[d,"\pi"]\\
 E_\mathcal{J} \ar[r] & E_\mathcal{J}/H_\mathcal{J}
\end{tikzcd}
\]

where the bottom horizontal map is the quotient map $E_\mathcal{J}\to E_\mathcal{J}/H_\mathcal{J}$. The Cartesian product $X_\mathcal{J}$ is the variety of pairs $(A,B)$ where $A$ is conjugate to a matrix with Jordan type $\mathcal{J}$ and $A$ and $B$ commute. The representatives of the Jordan normal forms of the same type have the same centralizers (denoted by $Z_\mathcal{J}$), and therefore, $X_\mathcal{J}$ is isomorphic to $E_\mathcal{J}\times (\GL_r(k)\times Z_\mathcal{J}/Z_\mathcal{J})$ where the action of $Z_\mathcal{J}$ on $\GL_r(k)\times Z_\mathcal{J}$ is given as
\[t.(g,z)=(gt,t^{-1}zg).\]
Indeed, the map 
\[E_\mathcal{J}\times (\GL_r(k)\times Z_\mathcal{J})\to X_\mathcal{J}\]
given by
\[(A,g,z)\mapsto (gAg^{-1}, gzg^{-1})\]
is surjective, and the elements $(A,g,z)$ are mapped to the same element as the elements $t.(A,g,z)=(A,gt, t^{-1}zt)$ for $t\in Z_\mathcal{J}$.

The action of $H_\mathcal{J}$ on $E_\mathcal{J}$ lifts to an action on $X_\mathcal{J}$, and thus $C_2^{\mathcal{J}}(\GL_r(k))$ is the quotient of $E_\mathcal{J}\times (\GL_r(k)\times Z_\mathcal{J}/Z_\mathcal{J})$ with the group $H_\mathcal{J}$ where $H_\mathcal{J}$ acts as 
\[h.(A,g,z):=(PAP^{-1}, gP, P^{-1}zP)\]
where $P$ is a permutation matrix corresponding to the element $h\in H_\mathcal{J}$.  Lifting the action to $E_\mathcal{J}\times \GL_r(k)\times Z_\mathcal{J}$, we obtain a Cartesian diagram
\[\begin{tikzcd}
    E_\mathcal{J}\times \GL_r(k)\times Z_\mathcal{J}\ar[d]\ar[r] & (E_\mathcal{J}\times \GL_r(k)\times Z_\mathcal{J})/H_\mathcal{J}\ar[d]\\
    E_\mathcal{J}\times (\GL_r(k)\times Z_\mathcal{J}/Z_\mathcal{J})\ar[r] \ar[d]& C_2^{\mathcal{J}}(\GL_r(k))\ar[d,"\pi"]\\
 E_\mathcal{J} \ar[r] & E_\mathcal{J}/H_\mathcal{J}.
\end{tikzcd}
\]
The map $E_\mathcal{J}\times \GL_r(k)\times Z_\mathcal{J}\to E_J$ is a $\GL_r(k)\times Z_\mathcal{J}$-torsor. Since, the group $\GL_r(k)$ and $Z_\mathcal{J}$ are special linear algebraic groups in the sense of \cite{chevalley1958anneaux} ($Z_\mathcal{J}$ is an extension of a smooth unipotent group by products of general linear groups), therefore, \[[E_\mathcal{J}\times \GL_r(k)\times Z_\mathcal{J}/H_\mathcal{J}]=[E_\mathcal{J}/H_\mathcal{J}][\GL_r(k)][Z_\mathcal{J}].\] Similarly, the map 
\[E_\mathcal{J}\times \GL_r(k)\times Z_\mathcal{J}\to  E_\mathcal{J}\times (\GL_r(k)\times Z_\mathcal{J}/Z_\mathcal{J})\]
is a $Z_\mathcal{J}$-torsor, thus 
\[[E_\mathcal{J}\times \GL_r(k)\times Z_\mathcal{J}/H_\mathcal{J}]=[C_2^{\mathcal{J}}(\GL_r(k))][Z_\mathcal{J}].\]
This implies that $[C_2^{\mathcal{J}}(\GL_r(k)]=[E_\mathcal{J}/H_\mathcal{J}][\GL_r(k)]$ in $\K_0(\Stck_k)$.

Now, we are ready to prove Theorem \ref{thm:motstabrank}.

\begin{proof}[Proof of Theorem \ref{thm:motstabrank}]
    The above implies that $[C_2(\GL_r(k))]=[\GL_r(k)]\sum_\mathcal{J} [E_\mathcal{J}/H_\mathcal{J}]=[\GL_r(k)][\mathcal{C}_r]$. Using Theorem \ref{thm:motstabconj}, we obtain
    \[\lim_{r\to \infty}\frac{[C_2(\GL_r(k))]}{[\GL_r(k)]q^r}=1\]
    in $\widehat{\K_0(\Stck_k)}$ proving our theorem.
\end{proof}

\begin{remark}
    In the proof of Theorem \ref{thm:motstabrank} we relied on two key statements: a) the virtual class of all conjugacy classes of $\GL_r(k)$ is motivically stable, b) the groups $\GL_r(k)$ and centralizer subgroups of the Jordan blocks of $\GL_r(k)$ are special algebraic groups in the sense of \cite{chevalley1958anneaux}. These facts also hold for the family of $\SL_r(k)$. In fact, a straight adaptation of the proof in \cite{macdonald1981numbers} on the number of conjugacy classes of $\SL_r(k)$ shows that the virtual class of the space of all conjugacy classes, $[\mathcal{C}(\SL_2(k))]$ of $\SL_r(k)$ is motivically stable with
\[\lim_{r\to \infty}\frac{[\mathcal{C}(\SL_r(k))]}{q^r}=\frac{1}{q-1}.\]
    Thus, the following theorem can be deduced verbatim.
\end{remark}

\begin{theorem}
    Consider the family of groups, $\SL_r(k)$. Then, 
    \[\lim_{r\to \infty}\frac{[C_2(\SL_r(k))]}{q^r[\SL_r(k)]}=\frac{1}{q-1}\]
    in $\widehat{\K_0(\Stck_k)}$.
\end{theorem}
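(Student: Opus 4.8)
The plan is to mimic the proof of Theorem~\ref{thm:motstabrank} almost verbatim, substituting $\SL_r(k)$ for $\GL_r(k)$ throughout and tracking the two places where the two groups genuinely differ. First I would recall the structural identity established in the paragraphs preceding Theorem~\ref{thm:motstabrank}: for each Jordan type $\mathcal{J}$ one has $[C_2^{\mathcal{J}}(G)] = [E_\mathcal{J}/H_\mathcal{J}][G]$ in $\K_0(\Stck_k)$, provided $G$ and the centralizers $Z_\mathcal{J}$ of the Jordan blocks are special in the sense of \cite{chevalley1958anneaux}. Since $\SL_r(k)$ is special and the centralizers of Jordan blocks inside $\SL_r(k)$ are still extensions of smooth unipotent groups by products of general linear groups (intersecting the $\GL_r$-centralizer with $\SL_r$ only cuts down by a torus factor, which is special), the same torsor argument applies verbatim. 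Summing over Jordan types gives $[C_2(\SL_r(k))] = [\SL_r(k)][\mathcal{C}(\SL_r(k))]$, where $[\mathcal{C}(\SL_r(k))]$ is the virtual class of the space of all conjugacy classes of $\SL_r(k)$.

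The second ingredient is the $\SL_r$-analogue of Theorem~\ref{thm:motstabconj}, namely that $\lim_{r\to\infty} [\mathcal{C}(\SL_r(k))]/q^r = \frac{1}{q-1}$ in $\widehat{M_\LL}$, which the remark preceding the statement asserts follows from a straightforward adaptation of \cite{macdonald1981numbers}. Concretely: a conjugacy class in $\SL_r(k)$ is determined by a Jordan type together with an assignment of nonzero eigenvalues whose product, weighted by block sizes, equals $1$. So relative to the $\GL_r$ parametrization by tuples of polynomials $h_l(x)$ with unit constant term, imposing the determinant-one condition removes exactly one free $\mathbb{G}_m$-parameter from the space $\prod_l(\AA^{n_l-1}\times\GG_m)$, dividing the virtual class by $(q-1)$ in the generic stratum; the same generating-function computation as in \cite{macdonald1981numbers} then yields the limit $\frac{1}{q-1}$. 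Combining the two ingredients,
\[
\lim_{r\to\infty}\frac{[C_2(\SL_r(k))]}{q^r[\SL_r(k)]} = \lim_{r\to\infty}\frac{[\mathcal{C}(\SL_r(k))]}{q^r} = \frac{1}{q-1}
\]
in $\widehat{\K_0(\Stck_k)}$, which is the claim.

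The main obstacle, and the one point that deserves genuine care rather than a reference to ``verbatim'', is verifying that the non-connectedness of the center of $\SL_r(k)$ does not break either the torsor argument or the convergence. In the torsor step one must check that intersecting $Z_\mathcal{J}\subseteq\GL_r$ with $\SL_r$ still produces a special group; the safe way is to observe that $Z_\mathcal{J}\cap\SL_r$ is the kernel of a surjective character $Z_\mathcal{J}\to\GG_m$ (the determinant), hence an extension making it special by the standard closure properties of special groups, so the quotients $\GL_r$-torsor / $Z_\mathcal{J}$-torsor factorizations go through as before. For the convergence, the determinant-one constraint must be imposed \emph{before} passing to the quotient by $H_\mathcal{J}$, since $H_\mathcal{J}$ permutes eigenvalues and preserves the product; one checks the constraint is $H_\mathcal{J}$-invariant so that $E_\mathcal{J}^{\SL}/H_\mathcal{J}$ is well-defined, and that the stratum-by-stratum division by $(q-1)$ is uniform so that the \cite{macdonald1981numbers}-style sum still converges in the filtration topology of $\widehat{M_\LL}$. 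Everything else is bookkeeping identical to the $\GL_r$ case.
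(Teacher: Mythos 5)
Your overall route is the same as the paper's: reduce to the two inputs (the torsor/specialness argument giving $[C_2(G)]=[G][\mathcal{C}(G)]$ stratum by stratum, and the Macdonald-style limit $\lim_r [\mathcal{C}(\SL_r(k))]/q^r=1/(q-1)$). But the one step you yourself single out as needing care is resolved incorrectly. The kernel of a surjective character of a special group is \emph{not} special by any closure property of special groups (extensions of special by special are special; kernels are not), and in fact it need not even be connected. Concretely, for a Jordan type in which every block size is divisible by some $d>1$, the determinant character on $Z_\mathcal{J}$ is a $d$-th power, and $Z_\mathcal{J}\cap\SL_r$ has component group $\ZZ/d\ZZ$: for a regular unipotent element of $\SL_r$ the centralizer is $\mu_r\times\GG_a^{r-1}$. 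For such strata the torsor factorization and the exact identity $[C_2^{\mathcal{J}}(\SL_r(k))]=[E^{\SL}_\mathcal{J}/H_\mathcal{J}][\SL_r(k)]$ genuinely fail; already in $\SL_2$ the unipotent stratum (eigenvalue $\pm 1$) has class $2\cdot(q^2-1)\cdot 2q=4q(q^2-1)$, whereas the claimed formula gives $2[\SL_2(k)]=2q(q^2-1)$. So the clean identity $[C_2(\SL_r(k))]=[\SL_r(k)][\mathcal{C}(\SL_r(k))]$, which your argument (and, implicitly, the paper's ``verbatim'' remark) uses, is false for fixed $r$.

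The statement of the theorem survives because these bad strata are asymptotically negligible: if all block sizes share a factor $d\geq 2$, the eigenvalue space $E^{\SL}_\mathcal{J}$ has dimension at most $r/2$, against $r-1$ for the regular semisimple stratum, so the discrepancy sits in filtration degree going to $-\infty$ and does not affect the limit in $\widehat{\K_0(\Stck_k)}$; on the complementary strata (gcd of block sizes equal to $1$) the centralizer $Z_\mathcal{J}\cap\SL_r$ is connected and is an extension of special groups (unipotent radical and a determinant-one-type kernel in $\prod_i\GL_{m_i}$ which, when the weights are coprime, is special), so there the torsor argument does go through. Your proof needs this dimension-count/negligibility step spelled out; as written, the appeal to ``kernel of a character, hence special'' is a step that fails. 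To be fair, the paper's own justification is the terse remark that facts (a) and (b) ``also hold for $\SL_r(k)$'' and glosses over exactly the same point, so your strategy is the intended one — but your explicit justification of the specialness claim is wrong and the quantitative fix above (or an equivalent one) is what is actually required.
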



\bibliographystyle{abbrv}
\addcontentsline{toc}{section}{References}
\bibliography{bibliography}

\end{document}